\newcounter{i}
\newtheorem{theorem}{Theorem}[section]
\newtheorem{lemma}[theorem]{Lemma}
\newtheorem{proposition}[theorem]{Proposition}
\newtheorem{conjecture}[theorem]{Conjecture}
\newtheorem{corollary}[theorem]{Corollary}
\newtheorem{definition}[theorem]{Definition}
\newtheorem{claim}{Claim}[theorem]
\noindent \emph{Proof.} {}{#1}{}}{\hfill
\theoremstyle{plain} 
\newcommand{\thistheoremname}{}
\newtheorem{genericthm}[section]{\thistheoremname}
\newcommand{\cK}{\mathcal{K}}
\newcommand{\cOK}{\mathcal{OK}}
\newcommand{\F}{\mathcal{F}}
\newcommand{\C}{\mathcal{C}}
\newcommand{\cH}{\mathcal{H}}
\newcommand{\bR}{\mathbb{R}}
\newcommand{\N}{\mathbb{N}}
\title{Progress towards Nash-Williams' Conjecture on Triangle Decompositions}
\author{
Michelle Delcourt
\thanks{Department of Mathematics, Ryerson University,
Toronto, Ontario M5B 2K3, Canada {\tt mdelcourt@ryerson.ca}. Research supported by supported by NSERC under Discovery Grant No. 2019-04269.}
\and
Luke Postle
\thanks{Combinatorics and Optimization Department,
University of Waterloo, Waterloo, Ontario N2L 3G1, Canada {\tt lpostle@uwaterloo.ca}. Partially supported by NSERC
under Discovery Grant No. 2019-04304.}}
\date{\today}
\begin{document}
\font\smallrm=cmr8
\maketitle
\begin{abstract}
Partitioning the edges of a graph into edge disjoint triangles forms a \emph{triangle decomposition} of the graph.  A famous conjecture by Nash-Williams from 1970 asserts that any sufficiently large, triangle divisible graph on $n$ vertices with minimum degree at least $0.75 n$ admits a triangle decomposition.  In the light of recent results, the fractional version of this problem is of central importance.  A \emph{fractional triangle decomposition} is an assignment of non-negative weights to each triangle in a graph such that the sum of the weights along each edge is precisely 1.  

We show that for any graph on $n$ vertices with minimum degree at least $\left(\frac{7+\sqrt{21}}{14}\right)n \lessapprox 0.82733n$ admits a fractional triangle decomposition. Combined with results of Barber, K\"{u}hn, Lo, and Osthus, this implies that for all $\varepsilon > 0$, every sufficiently large triangle divisible graph on $n$ vertices with minimum degree at least $\left(\frac{7+\sqrt{21}}{14} + \varepsilon\right)n$ admits a triangle decomposition.
\end{abstract}

\section{Introduction}
A natural question in graph theory is whether the edges of a graph $G$ can be partitioned into edge disjoint copies of a small fixed subgraph $F$; such a partition of $E(G)$ is called an \emph{$F$-decomposition}.  Several necessary divisibility conditions arise for finding an $F$-decomposition: $e(F)$, the \emph{number of edges of $F$}, must divide $e(G)$, and $\gcd(F)$, the \emph{greatest common divisor of the degrees of the vertices of $F$}, must divide $\gcd(G)$.  If $F$ and $G$ satisfy these two divisibility conditions, then we say that $G$ is \emph{$F$-divisible}.  Although every graph with an $F$-decomposition must be $F$-divisible, not every $F$-divisible graph admits an $F$-decomposition.

In 1847 Kirkman~\cite{K} showed that when $G$ is a $K_3$-divisible complete graph, then $G$ admits a $K_3$-decomposition.  Over a century later, in the 1970s Wilson~\cite{W} generalized this by showing that for every graph $F$, every sufficiently large $F$-divisible complete graph admits an $F$-decomposition.  This result was a special case for graphs of the notorious Existence Conjecture of block designs dating from the mid-1800's.  In a recent major breakthrough result, Keevash~\cite{Keev} proved the Existence Conjecture using a mixture of algebraic and combinatorial techniques.  In~\cite{arb}, Glock, K\"{u}hn, Lo, and Osthus give a purely combinatorial proof of the Existence Conjecture via iterative absorption.

A natural, related area of study is finding $F$-decompositions in $F$-divisible (hyper)graphs with large minimum degree.  In fact, the results of Glock, K\"{u}hn, Lo, and Osthus~\cite{arb} extend to this much more general setting.  Perhaps the most famous conjecture in this minimum degree setting is the Nash-Williams Conjecture from 1970 which focuses on triangle decompositions as follows:

\begin{conjecture}[Nash-Williams~\cite{NW}]\label{ConjNW}
Let $G$ be a $K_3$-divisible graph with $n$ vertices and
minimum degree $\delta (G) \geq \frac{3}{4}n$. If $n$ is sufficiently large, then $G$ admits a $K_3$-decomposition.
\end{conjecture}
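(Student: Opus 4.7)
The plan is to follow the two-step strategy that has become standard for decomposition problems in the high-minimum-degree regime. The first step is to prove that every graph $G$ on $n$ vertices with $\delta(G) \geq 3n/4$ admits a fractional triangle decomposition. The second step invokes the iterative absorption machinery of Barber, Kühn, Lo, and Osthus to upgrade any such fractional decomposition into an honest $K_3$-decomposition of $K_3$-divisible $G$, provided $n$ is sufficiently large. Because their black box loses a small $\varepsilon$ in the degree threshold, I would first prove the fractional statement at $(3/4+\varepsilon)n$ for every fixed $\varepsilon>0$, and then cover $\delta(G) = \lceil 3n/4\rceil$ itself via an exact-threshold stability analysis, treating the finitely many near-extremal shapes (balanced blow-ups of $K_4$ and their close perturbations) by an ad hoc patch of the absorption step.

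For the fractional step I would set things up via LP duality: a fractional $K_3$-decomposition of $G$ fails to exist iff there is a weighting $w:E(G)\to \bR$ with $\sum_{e \in E(T)} w(e) \leq 0$ for every triangle $T\subseteq G$, yet $\sum_{e \in E(G)} w(e) > 0$. Assuming such a $w$, one exploits the double count
\[
\sum_{uv \in E(G)} w(uv)\,|N(u)\cap N(v)| \;=\; \sum_{T\subseteq G}\,\sum_{e \in E(T)} w(e) \;\leq\; 0,
\]
together with the lower bound $|N(u)\cap N(v)| \geq n/2$ coming from $\delta(G)\geq 3n/4$. To pass from this codegree-weighted inequality to a bound on $\sum_e w(e)$ itself, I would decompose $w$ into a component proportional to the codegree function plus a residual orthogonal part, and control the residual using spectral (or flag-algebraic) estimates on the edge-triangle incidence operator restricted to graphs with $\delta \geq 3n/4$.

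The main obstacle, and the reason the conjecture has withstood attack for fifty years, is that $3n/4$ is precisely the extremal threshold: naive double-counting yields a strictly weaker conclusion, and indeed the present paper reaches only $(7+\sqrt{21})/14 \approx 0.827$ by exactly this kind of argument. Closing the gap to $3/4$ forces one to analyse the extremal configurations, for which the linear relaxation is only barely feasible. The decisive new ingredient I envisage is a two-regime stability theorem: any $G$ that is $o(n)$-close in edit distance to a $K_3$-divisible balanced $K_4$-blow-up (or to one of a short list of other tight examples) admits an explicit fractional decomposition by direct construction exploiting the known designs on $K_4$-blow-ups; while any $G$ that is $\Omega(n)$-far inherits enough pseudorandom expansion that the codegree-weighted bound above can be sharpened from ${\approx}\,0.827$ down to $3/4$ via a second-moment refinement that uses \emph{two} correction layers instead of one. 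The crux on which the plan stands or falls is ensuring that the "far from extremal" robustness threshold in the second regime matches the $o(n)$ stability window of the first — a matching that, to my knowledge, no currently available tool delivers.
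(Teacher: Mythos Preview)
The statement you are attempting to prove is the Nash-Williams \emph{Conjecture}, which remains open; the paper does not prove it and contains no proof to compare against. The paper's actual contribution is the weaker bound $\delta^*_{K_3}\le (7+\sqrt{21})/14\approx 0.82733$, a fact you yourself note in the proposal.

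What you have written is therefore not a proof but a research programme, and you are candid about its gaps. The LP-duality and codegree double-count you sketch for the fractional step is essentially the engine behind Dross's $0.9$ bound and, in a more refined form, the edge-gadget weighting of the present paper; as you observe, these methods stall well above $3/4$. The stability-versus-robust-expansion dichotomy you propose for closing the gap is a reasonable line of attack, but you explicitly concede that matching the two regimes requires a tool that ``no currently available tool delivers.'' That admission is the gap: there is at present no second-moment or spectral refinement known to push the far-from-extremal case down to $3/4$, and the Barber--K\"uhn--Lo--Osthus absorption machinery genuinely loses an $\varepsilon$, so your ``ad hoc patch'' at the exact threshold is doing unspecified heavy lifting. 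The paper's own discussion of further directions reaches the same verdict from the inside: the authors note that their weighting $w_G$ appears to go negative on the near-extremal examples (clique blow-ups of $C_4$, independent blow-ups of $K_4$), so a fundamentally different weighting or delegation rule --- not merely a sharper analysis of the present one --- would be needed. Until those missing ingredients are supplied, the proposal is a plausible wish list rather than a proof.
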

Many constructions show that $3/4$ is tight; for example, consider the following family of constructions from~\cite{min}. Let $H_1$ and $H_2$ be $(6k+2)$-regular graphs on $12k+6$ vertices. Consider the complete join of $H_1$ and $H_2$; this is a $K_3$-divisible graph with $n=24k+12$ vertices and minimum degree $12k+6 + 6k+2 = 18k +9 -1 = \frac{3}{4}n - 1$.  Any triangle must contain zero or two of the cross edges.  There are exactly $(12k+6)^2 = \frac{n^2}{4}$ cross edges belonging to $\frac{n^2}{8}$ triangles; however, $H_1$ and $H_2$ contain a total of $2 (3k+1)(12k+6) < \frac{n^2}{8}$ additional edges.  Thus, this graph does not admit a $K_3$-decomposition. 

For general cliques, a folklore generalization of the Nash-Williams Conjecture asserts that every sufficiently large, $K_r$-divisible graph $G$ on $n$ vertices with $\delta (G) \geq \frac{r}{r+1}n$ admits a $K_r$-decomposition.  If true, then this would also be tight (see Yuster~\cite{Y05} for a construction). 

\subsection{The Importance of Fractional Decompositions}
Recent breakthrough results of Barber, K\"{u}hn, Lo, and Osthus~\cite{tri16} and later Glock, K\"{u}hn, Lo, Montgomery, Osthus~\cite{decomp} show that the existence of $F$-decompositions is related to the existence of fractional $F$-decompositions as follows. 

A \emph{fractional $F$-decomposition} of $G$ is an assignment of non-negative weights to each copy of $F$ in $G$ such that the sum of the weights along each edge is precisely 1. The \emph{fractional $F$-decomposition threshold} $\delta^*_{F}$ is defined as $\displaystyle\limsup_{n \rightarrow \infty}\delta^*_{F}(n)$ where $\delta^*_{F}(n)$ is the least $c > 0$ such that any graph $G$ on $n$ vertices with minimum degree $\delta(G) > cn$ has a fractional $F$-decomposition. 

Combining the breakthrough results of Barber, K\"{u}hn, Lo, and Osthus~\cite{tri16} for $r=3$ and Glock, K\"{u}hn, Lo, Montgomery, and Osthus~\cite{decomp} for $r>3$, the following is known:

\begin{theorem}[\cite{tri16},\cite{decomp}]\label{CliqueDecomp}
Let $r\geq 3$ and $\varepsilon > 0$.  Any sufficiently large, $K_r$-divisible graph $G$ on $n$ vertices with minimum degree
$$\delta(G) \geq \left(\max\left\{\delta^*_{K_{r}}, \frac{r}{r+1}\right\}+\varepsilon\right)n$$
admits a $K_r$-decomposition.
\end{theorem}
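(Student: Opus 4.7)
The plan is to use the iterative absorption method that has become standard for decomposition problems. The two hypotheses feed in at different stages: $\delta(G) > \delta^*_{K_r}\cdot n$ drives an approximate decomposition via the fractional relaxation, while $\delta(G) \geq \frac{r}{r+1}n$ supplies the local flexibility needed for absorption.

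First, I would set aside a small random subgraph $A \subseteq G$ as an \emph{absorber}, chosen so that for every sufficiently small $K_r$-divisible graph $L$ supported on the remaining vertices, the combined graph $A \cup L$ admits a $K_r$-decomposition. Constructing such an $A$ is the classical way to deal with the fact that any greedy/nibble process will leave an unstructured leftover. Randomly sparsifying $G$ and then using the $\frac{r}{r+1}$-minimum-degree assumption to find, for each edge $e$, many locally absorbing gadgets (e.g.\ transformers converting $e$ into other edges inside $A$) should work, provided degrees into the leftover after the nibble are still large enough.

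Second, on the graph $G' := G - A$, I would run an iterative approximate decomposition. The hypothesis $\delta(G') > \delta^*_{K_r}\cdot n$ (still valid after removing $A$, since $A$ is sparse) guarantees a fractional $K_r$-decomposition of $G'$. Applying the R\"odl nibble (or the Frankl--R\"odl hypergraph matching machinery, as in Yuster's and Dukes's work) to the auxiliary hypergraph whose vertices are $E(G')$ and whose edges are the $K_r$-copies of $G'$, the fractional decomposition upgrades to an integer matching covering all but an $\varepsilon$-fraction of edges. Iterating this on the uncovered part, each round shrinking the leftover by a constant factor, yields after $O(\log n)$ rounds a leftover $L$ that is $K_r$-divisible (since both $G$ and the removed $K_r$-copies are) and arbitrarily small in density.

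Third, I would feed $L$ into the absorber $A$ and use the defining property of $A$ to decompose $A \cup L$ into copies of $K_r$, concatenating this with the nibble-produced decomposition to obtain a $K_r$-decomposition of all of $G$. The main obstacle I expect is the absorber construction: one must ensure that $A$ absorbs \emph{every} possible leftover $L$ of the right type, which requires a covering-design-type argument where the $\frac{r}{r+1}$ threshold is used to guarantee that every edge has many neighbourhoods of size $r-2$ forming $K_{r-2}$, hence many $K_r$-copies through it. The interplay between making $A$ sparse enough not to disturb the nibble and dense/robust enough to absorb any divisible remainder is the delicate quantitative heart of the proof.
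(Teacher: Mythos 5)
You should first note that the paper does not prove this theorem at all: it is quoted as a black box from Barber, K\"uhn, Lo, and Osthus \cite{tri16} (the case $r=3$) and Glock, K\"uhn, Lo, Montgomery, and Osthus \cite{decomp} (the case $r>3$), and the paper only remarks that those proofs combine iterative absorption with the Haxell--R\"odl result converting fractional decompositions into approximate ones. So your proposal is being measured against the strategy the paper attributes to those works rather than against an in-paper argument.

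Your sketch does capture the correct top-level architecture (fractional decomposition $\Rightarrow$ approximate decomposition via the nibble, then absorb the leftover), but two of its steps fail as written, and they are exactly the difficulties that iterative absorption was invented to overcome. First, the proposed iteration of the nibble does not work: after you delete the $K_r$-copies produced in one round, the uncovered leftover is a sparse graph with no minimum-degree condition of the form $cn$, so neither the hypothesis $\delta > \delta^*_{K_r} n$ nor the nibble machinery applies to it, and repeating "shrink by a constant factor for $O(\log n)$ rounds" has no justification. Second, a single global absorber $A$ that can absorb \emph{every} possible small $K_r$-divisible leftover on the whole vertex set is not known to be constructible at density $\frac{r}{r+1}+\varepsilon$; the family of potential leftovers is far too rich for the gadget-union construction you describe. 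The actual proofs in \cite{tri16} and \cite{decomp} avoid both issues with a vortex: a nested sequence of random vertex subsets $V(G)=U_0\supseteq U_1\supseteq\cdots\supseteq U_\ell$ with $|U_\ell|$ bounded, together with a "cover down" lemma showing that at each step one can cover all edges not inside the next vortex set using $K_r$-copies, while only invoking approximate decompositions of graphs whose relative minimum degree is still close to that of $G$. This confines the final leftover to the bounded set $U_\ell$, where only boundedly many divisible leftovers can occur and absorbers can be built for each. Without the vortex and the cover-down step, your outline cannot be completed, so as a proof it has a genuine gap even though the high-level slogan is right.
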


There is an equivalent formulation of Theorem~\ref{CliqueDecomp} as follows. We define the \emph{decomposition threshold of $F$}, denoted $\delta_F$, as $\displaystyle\limsup_{n \rightarrow \infty}\delta_{F}(n)$ where $\delta_{F}(n)$ is the least $c > 0$ such that any $F$-divisible graph $G$ on $n$ vertices with minimum degree $\delta(G) > cn$ has an $F$-decomposition. Then Theorem~\ref{CliqueDecomp} is equivalent to 
$$\delta_{K_r} = \max\left\{\delta^*_{K_r}, \frac{r}{r+1}\right\}.$$

The proof of Theorem~\ref{CliqueDecomp} uses the method of iterative absorption to transform an approximate clique decomposition of a graph into a clique decomposition, while an earlier result of Haxell and R\"{o}dl~\cite{HR} shows how to transform a fractional clique decomposition into an approximate clique decomposition. Thus determining $\delta^*_{K_r}$ is now the key to determining $\delta_{K_r}$.  We note that Yuster's constructions~\cite{Y05} mentioned above imply that $\delta^*_{K_r} \geq r/(r+1) = 1 -1/(r+1)$, and hence $\delta^*_{K_3} \geq 3/4$.  Showing that $\delta^*_{K_3} \leq 3/4$ would prove the Nash-Williams Conjecture asymptotically.

As for general graphs $F$, the case when $F$ is bipartite has been completely determined by Glock, K\"{u}hn, Lo, Montgomery, and Osthus~\cite{decomp} (in particular, $\delta_F$ is either $2/3$ or $1/2$ depending on the structure of $F$). As for other $F$, it turns out that the chromatic number $\chi(F)$ is of fundamental importance as the following general result of Glock, K\"{u}hn, Lo, Montgomery, and Osthus in~\cite{decomp} shows:

\begin{theorem}[Glock, K\"{u}hn, Lo, Montgomery, and Osthus~\cite{decomp}]\label{ChiDecomp}
Let $\varepsilon > 0$.  Let $F$ be a graph with chromatic number $\chi = \chi(F) \geq 3$, then any sufficiently large, $F$-divisible graph $G$ on $n$ vertices with minimum degree
$$\delta(G) \geq \left(\max\left\{\delta^*_{K_{\chi}}, \frac{\chi}{\chi+1}\right\}+\varepsilon\right)n$$
admits an $F$-decomposition.
\end{theorem}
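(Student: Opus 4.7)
The plan is to decouple the two quantities inside the maximum and prove $\delta_F \leq \max\{\delta^*_{K_\chi},\chi/(\chi+1)\}$ via the iterative absorption paradigm of~\cite{tri16,decomp}. Roughly, $\delta^*_{K_\chi}$ will control the existence of a fractional (hence approximate) $F$-decomposition, while $\chi/(\chi+1)$ will control the feasibility of the absorbing gadgets that swallow the residue.

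\emph{Fractional step.} First I would show that the fractional threshold of $F$ is at most that of $K_\chi$, via a blow-up averaging trick. Since $\chi(F)=\chi$, fix a proper $\chi$-colouring of $F$; then $F$ embeds into the balanced complete $\chi$-partite blow-up $K_\chi[t]$ with $t=|V(F)|$. By the minimum degree hypothesis, $G$ has a fractional $K_\chi$-decomposition. For each weighted $K_\chi$-copy in $G$, I would distribute its weight uniformly over all of its $K_\chi[t]$-extensions inside $G$ -- such extensions exist in abundance because the common neighbourhood of a $\chi$-clique has size at least $\bigl(1-\chi(1-\delta(G)/n)\bigr)n$, which is linear in $n$ -- and then write each $K_\chi[t]$ as a uniform average over its $F$-decompositions, which exist by a standard Wilson-type argument for $t$ large enough. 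A double count on edges verifies that the resulting weights form a fractional $F$-decomposition. Passing from fractional to approximate is then a direct application of the Haxell--R\"{o}dl nibble~\cite{HR}, yielding an edge-disjoint $F$-packing in $G$ that leaves only $o(n^2)$ uncovered edges.

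\emph{Absorption step.} To promote the approximate decomposition to an exact one, I would invoke the iterative absorption framework of Barber, K\"{u}hn, Lo, and Osthus. Build a vortex $V(G) \supseteq U_1 \supseteq \cdots \supseteq U_\ell$ of geometrically shrinking subsets that inherit the minimum-degree assumption, and at the very beginning reserve ``absorbers'' inside $U_\ell$ capable of swallowing any $F$-divisible residue. At each level $i<\ell$, apply the approximate $F$-decomposition from the previous step to cover all edges with at least one endpoint outside $U_{i+1}$, passing the (small) uncovered residue into $U_{i+1}$; after $\ell$ levels only an $F$-divisible leftover inside $U_\ell$ remains, which is then absorbed. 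The $\chi/(\chi+1)$ threshold enters precisely at the absorber-construction step: the absorbers are assembled from vertex-disjoint copies of $K_\chi[t]$ (which admit many different $F$-decompositions and hence provide the required flexibility), and finding an almost-perfect $K_\chi[t]$-tiling requires $\delta(U_\ell) \geq (\chi/(\chi+1)+\varepsilon)|U_\ell|$ by Hajnal--Szemer\'{e}di-type arguments.

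\emph{Main obstacle.} The most delicate ingredient is the absorber construction for arbitrary $F$. Unlike the clique case, $F$-divisibility imposes subtle numerical constraints (via $e(F)$ and $\gcd(F)$) that absorbers must respect, and these absorbers must survive unchanged through all $\ell$ levels of the vortex without degrading the minimum-degree bound. The plan is to follow~\cite{decomp} and construct \emph{transformers} that exchange any $F$-divisible leftover for an equivalent residue supported on a pre-reserved $F$-decomposable region inside some $K_\chi[t]$. Ensuring compatibility of these transformers with the recursive vortex, and robustness against every possible divisible residue, is the principal technical hurdle; the conceptual payoff is that once the reduction to $K_\chi[t]$-based absorbers goes through, the only two genuinely $F$-dependent thresholds to beat are exactly the two appearing in the statement.
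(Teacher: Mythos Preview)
This theorem is not proved in the present paper at all: it is quoted verbatim as a result of Glock, K\"{u}hn, Lo, Montgomery, and Osthus~\cite{decomp} and used only as a black box to derive Corollary~\ref{CorChi}. There is therefore no ``paper's own proof'' to compare your proposal against.

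That said, your sketch is a fair high-level summary of the strategy actually carried out in~\cite{decomp}: reduce the fractional $F$-threshold to the fractional $K_\chi$-threshold via blow-ups, convert fractional to approximate via Haxell--R\"{o}dl, and then run iterative absorption through a vortex with transformer-based absorbers built from copies of $K_\chi[t]$, with the Hajnal--Szemer\'{e}di bound $\chi/(\chi+1)$ governing the tiling needed for the absorbers. The genuine work---and what cannot be compressed into a paragraph---is precisely the point you flag as the main obstacle: the construction of transformers for arbitrary $F$ that respect the $e(F)$ and $\gcd(F)$ divisibility constraints and remain valid through every level of the vortex. In~\cite{decomp} this occupies the bulk of the argument and requires a careful ``canonical'' representation of divisible leftovers, so while your outline is on the right track, it should be read as a roadmap rather than a proof.
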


Again, this is equivalent to $$\delta_F \le \max \left\{ \delta^*_{K_{\chi(F)}}, \frac{\chi(F)}{\chi(F)+1}\right\}.$$ 
\noindent Thus $\delta^*_{K_r}$ determines not only the decomposition threshold of cliques but provides an upper bound on the decomposition threshold of all $r$-chromatic graphs. Given these results, determining $\delta^*_{K_r}$ is now of central importance in this area. 

For general $r$, Yuster~\cite{Y05} in 2005 showed that $\delta^*_{K_r} \leq 1 - 1/(9r^{10})$. In 2012, Dukes~\cite{Dukes} improved this to $\delta^*_{K_r} \leq 1 - 2/(9r^2(r-1)^2)$, which was then further improved by Barber, K\"{u}hn, Lo, Montgomery, and Osthus~\cite{decomp} to $\delta^*_{K_r} \leq 1-1/(10^4r^{1.5})$. The current best known bound is a more recent improvement due to Montgomery~\cite{Montgomery} that $\delta^*_{K_r} \leq 1- 1/(100r)$, which is of the same order in $r$ as the known lower bound of $1 -1/(r+1)$.

Even better bounds are known for the triangle case.  In her thesis in 2014, Garaschuk~\cite{G14} showed that $\delta^*_{K_3} \leq 0.956$. In 2015, Dross~\cite{D15} proved the current best known bound that $\delta^*_{K_3} \leq 0.9$ using the min-flow max-cut theorem.

\subsection{Our Main Results}

Our main theorem is the following significant improvement on Dross' result:

\begin{theorem}\label{thm:main} Let $G$ be a graph on $n$ vertices with
minimum degree $\delta(G)\geq \left(\frac{7+\sqrt{21}}{14}\right)n$, then $G$ admits a fractional $K_3$-decomposition.
\end{theorem}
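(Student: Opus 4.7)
The plan is to apply LP duality to reduce the problem to a weighted counting inequality. By Farkas' lemma, $G$ admits a fractional $K_3$-decomposition if and only if every weighting $y: E(G) \to \mathbb{R}$ satisfying $y(e_1) + y(e_2) + y(e_3) \geq 0$ for every triangle $\{e_1, e_2, e_3\}$ of $G$ must also satisfy $\sum_{e \in E(G)} y(e) \geq 0$. I would assume for contradiction that there exists such a $y$ with $\sum_e y(e) < 0$, and use the minimum degree hypothesis $\delta(G) \geq \alpha n$ with $\alpha = (7+\sqrt{21})/14$ to derive a contradiction.

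The core technique is a judicious combination of triangle inequalities. For each edge $uv \in E(G)$, summing the triangle constraint over all $w \in N(u) \cap N(v)$ yields
$$c(uv)\, y(uv) \;+\; \sum_{w \in N(u) \cap N(v)}\bigl(y(uw) + y(vw)\bigr) \;\geq\; 0,$$
where $c(uv) = |N(u) \cap N(v)|$. Summing these across all edges with carefully chosen coefficients, and swapping order of summation in the ``two-step'' terms (using identities of the form $\sum_{v \in N(u)} \sum_{w \in N(u) \cap N(v)} y(uw) = \sum_{w \in N(u)} c(uw)\, y(uw)$), produces linear relations among $\sum_e y(e)$, $\sum_e c(e)\, y(e)$, and quadratic aggregates such as $\sum_e c(e)^2\, y(e)$. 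The minimum degree hypothesis enters through the bound $c(uv) \geq 2\delta(G) - n \geq (2\alpha - 1) n$ for every edge $uv$, together with $d(u) \geq \alpha n$. The goal is to isolate an inequality of the form $P(\alpha) \sum_e y(e) \geq (\text{non-negative terms})$ where $P(\alpha) > 0$ precisely when $\alpha > (7+\sqrt{21})/14$; the quadratic $7\alpha^2 - 7\alpha + 1$, whose larger root is exactly $(7+\sqrt{21})/14$, is the natural candidate for $P$.

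The main obstacle is to find the exact combination that produces this optimal quadratic. Naively summing all triangle inequalities only yields $\sum_e c(e)\, y(e) \geq 0$, which does not directly bound $\sum_e y(e)$ since $c(e)$ varies. A sharper combination must exploit both degree and codegree information simultaneously, likely by weighting triangle constraints by functions of $c(e)$ or by vertex-level quantities, and possibly splitting $y$ into its positive and negative parts to apply Cauchy--Schwarz or power-mean inequalities asymmetrically. A secondary challenge is controlling error terms that arise when codegrees substantially exceed $(2\alpha-1)n$, to ensure the resulting bound is tight precisely at $\alpha = (7+\sqrt{21})/14$ rather than at a weaker threshold. Identifying the extremal structures (where $c(e)$ and $d(v)$ are all near their minima) and quantitatively balancing the key terms against them is the heart of the argument.
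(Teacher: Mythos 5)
Your dual formulation is correct: by Farkas' lemma (or LP duality), $G$ has a fractional $K_3$-decomposition if and only if every $y:E(G)\to\mathbb{R}$ with $y(e_1)+y(e_2)+y(e_3)\ge 0$ on every triangle satisfies $\sum_e y(e)\ge 0$. But beyond this reduction, the proposal does not contain a proof. The entire substance of the theorem lies in the step you defer --- choosing the multipliers for the triangle inequalities and extracting an inequality of the form $P(\alpha)\sum_e y(e)\ge(\text{non-negative terms})$ --- and you explicitly acknowledge that you do not know how to do this (``the main obstacle is to find the exact combination''). The naive aggregation you do carry out only gives $\sum_e c(e)\,y(e)\ge 0$, which, as you note, proves nothing; the suggestions that follow (weighting by functions of $c(e)$, splitting $y$ into positive and negative parts, Cauchy--Schwarz) are speculative, and the assertion that $7\alpha^2-7\alpha+1$ is ``the natural candidate'' is reverse-engineered from the known answer rather than derived from any identity you establish. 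So there is a genuine gap: the argument stops exactly where the difficulty begins.

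It is also worth noting that the paper proceeds on the primal side, not the dual: it builds an explicit triangle weighting by delegating each edge's unit demand through the triangles, $K_4$s and $K_5$s containing it (via the edge-gadgets of Barber, K\"uhn, Lo, Montgomery, and Osthus), shows this weighting meets every edge with total weight $1$, and then proves non-negativity of the triangle weights by solving a ten-variable nonlinear maximization with a cancellation/ramp structure; the quadratic $7d^2-7d+1$ emerges only at the very end of that optimization. Dual-side and flow-style arguments in the literature (Dross; Dukes--Horsley) are closer in spirit to what you propose, and the paper points out that those approaches face a theoretical barrier at $5/6\approx 0.83333$, which exceeds $\frac{7+\sqrt{21}}{14}\approx 0.82733$. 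So even granting considerable effort on the missing step, it is far from clear that an aggregation of triangle inequalities using only degree and codegree bounds of the kind you describe can reach the stated threshold; you would need to articulate and verify a specific multiplier scheme, which is precisely what is absent.
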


Note that $\left(\frac{7+\sqrt{21}}{14}\right)n< 0.82733n$.  Combined with Theorem~\ref{CliqueDecomp}, our result gives the following progress on the Nash-Williams Conjecture:

\begin{corollary}\label{cor:main}
Let $\varepsilon > 0$. Let $G$ be a $K_3$-divisible graph with $n$ vertices and
minimum degree $\delta(G)\geq \left(\frac{7+\sqrt{21}}{14} + \varepsilon\right)n$.  If $n$ is sufficiently large, then $G$ admits a $K_3$-decomposition.
\end{corollary}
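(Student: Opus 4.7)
The plan is to derive the corollary as a direct consequence of Theorem~\ref{thm:main} and Theorem~\ref{CliqueDecomp}. First, Theorem~\ref{thm:main} shows that any $n$-vertex graph $G$ with $\delta(G) \ge \tfrac{7+\sqrt{21}}{14}\,n$ admits a fractional $K_3$-decomposition; hence $\delta^*_{K_3}(n) \le \tfrac{7+\sqrt{21}}{14}$ for every $n$, and taking the limit supremum,
$$
\delta^*_{K_3} \;=\; \limsup_{n \to \infty} \delta^*_{K_3}(n) \;\le\; \tfrac{7+\sqrt{21}}{14}.
$$

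Second, I would check that $\tfrac{7+\sqrt{21}}{14} > \tfrac{3}{4}$. This follows because $\sqrt{21} > \sqrt{12.25} = 3.5$, so $\tfrac{7+\sqrt{21}}{14} > \tfrac{10.5}{14} = \tfrac{3}{4}$. Consequently,
$$
\max\!\Big\{\delta^*_{K_3},\,\tfrac{3}{4}\Big\} \;\le\; \tfrac{7+\sqrt{21}}{14}.
$$

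Finally, I would apply Theorem~\ref{CliqueDecomp} with $r = 3$: for every $\varepsilon > 0$, every sufficiently large $K_3$-divisible graph $G$ on $n$ vertices with $\delta(G) \ge \left(\max\{\delta^*_{K_3}, \tfrac{3}{4}\} + \varepsilon\right)n$ admits a $K_3$-decomposition. Combined with the preceding inequality, the hypothesis $\delta(G) \ge \left(\tfrac{7+\sqrt{21}}{14} + \varepsilon\right)n$ of the corollary is already sufficient to invoke Theorem~\ref{CliqueDecomp}, yielding the desired $K_3$-decomposition. There is no substantive obstacle here; the entire technical weight of the corollary is carried by Theorem~\ref{thm:main}, whose proof is the main contribution of the paper, and by the black-box application of the Barber--K\"uhn--Lo--Osthus machinery encapsulated in Theorem~\ref{CliqueDecomp}.
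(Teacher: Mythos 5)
Your proposal is correct and follows exactly the route the paper intends: Theorem~\ref{thm:main} gives $\delta^*_{K_3} \le \frac{7+\sqrt{21}}{14}$, and since this value exceeds $\frac{3}{4}$, Theorem~\ref{CliqueDecomp} with $r=3$ yields the corollary immediately, which is precisely how the paper treats it.
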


Combined with Theorem~\ref{ChiDecomp}, our result gives the following more general corollary:

\begin{corollary}\label{CorChi}
Let $\varepsilon > 0$. Let $F$ be a graph with chromatic number $\chi(F)=3$, then any sufficiently large, $F$-divisible graph $G$ on $n$ vertices with minimum degree $\delta(G) \ge \left(\frac{7+\sqrt{21}}{14} + \varepsilon\right)n$ admits an $F$-decomposition.
\end{corollary}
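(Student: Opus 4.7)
The plan is to deduce Corollary~\ref{CorChi} as a direct consequence of Theorem~\ref{thm:main} (the main theorem of this paper) and Theorem~\ref{ChiDecomp} (the result of Glock, K\"{u}hn, Lo, Montgomery, and Osthus). Essentially all of the substantive work has been packed into these two inputs, and the only task is to verify that they compose correctly in the minimum-degree range claimed.

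First, I would translate Theorem~\ref{thm:main} into the language of the fractional decomposition threshold $\delta^*_{K_3}$. By the definition recalled in the introduction, $\delta^*_{K_3}(n)$ is the least $c > 0$ such that any $n$-vertex graph $G$ with $\delta(G) > cn$ admits a fractional $K_3$-decomposition. Since Theorem~\ref{thm:main} guarantees such a decomposition already when $\delta(G) \geq \frac{7+\sqrt{21}}{14}\,n$, it follows that $\delta^*_{K_3}(n) \leq \frac{7+\sqrt{21}}{14}$ for every $n$, and taking the $\limsup$ as $n \to \infty$ yields $\delta^*_{K_3} \leq \frac{7+\sqrt{21}}{14}$.

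Second, I would verify the easy numerical comparison $\frac{7+\sqrt{21}}{14} > \frac{3}{4}$. This is equivalent to $\sqrt{21} > \frac{7}{2}$, i.e., $21 > \frac{49}{4}$, which clearly holds. Combined with the previous step, this gives $\max\left\{\delta^*_{K_3},\, \frac{3}{4}\right\} \leq \frac{7+\sqrt{21}}{14}$, so the minimum-degree hypothesis of Corollary~\ref{CorChi} is at least as strong as the hypothesis of Theorem~\ref{ChiDecomp} applied with $\chi = 3$.

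Finally, I would invoke Theorem~\ref{ChiDecomp} with $\chi = \chi(F) = 3$: for any $\varepsilon > 0$ and any sufficiently large $F$-divisible graph $G$ on $n$ vertices with $\delta(G) \geq \left(\frac{7+\sqrt{21}}{14} + \varepsilon\right)n$, the previous inequality yields $\delta(G) \geq \left(\max\{\delta^*_{K_3}, \frac{3}{4}\} + \varepsilon\right)n$, and Theorem~\ref{ChiDecomp} then supplies the desired $F$-decomposition. The argument is a one-step chaining with no genuine obstacle beyond the routine check $\sqrt{21} > 7/2$; the real substance is concentrated in Theorem~\ref{thm:main} (which pushes the fractional triangle threshold below $0.82733$) and in Theorem~\ref{ChiDecomp} (which reduces $\chi$-chromatic $F$-decomposition to the $K_{\chi}$ case via the iterative-absorption machinery).
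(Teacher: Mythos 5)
Your proposal is correct and matches the paper's (implicit) argument exactly: the paper derives Corollary~\ref{CorChi} by combining Theorem~\ref{thm:main}, viewed as the bound $\delta^*_{K_3} \le \frac{7+\sqrt{21}}{14}$, with Theorem~\ref{ChiDecomp} for $\chi(F)=3$, just as you do. The routine check $\frac{7+\sqrt{21}}{14} > \frac{3}{4}$ is the only detail needed, and you have verified it.
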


\noindent Independently around the same time Dukes and Horsley~\cite{DH20} announced a value of $0.852$ for $\delta^*_{K_3}$.  We should mention that their proof depends on the use of computer programs whereas ours is completely verifiable by hand. Interestingly they also provide examples to demonstrate that their approach (as well as Dross's approach~\cite{D15}) encounters a theoretical barrier at $5/6 \approx 0.83333 > 0.82733$.

Our main result combined with the work of Condon, Kim, K\"uhn, and Osthus (see Corollary 1.4 in~\cite{band}) immediately gives the following corollary. Here we say that
a collection $\cH = \left\{H_1, \ldots, H_s\right\}$ of graphs \emph{packs} into $G$ if there exist pairwise edge-disjoint copies
of $H_1, \ldots, H_s$ in $G$, and $\Delta(G)$ is defined to be the \emph{maximum degree} of a graph $G$.

\begin{corollary}
For all $\Delta, k \in \N\setminus\left\{1\right\}$ and $0 < \nu, \delta < 1$,
there exist $\xi > 0$ and $n_0 \in \N$ such that for $n \geq n_0$ the following holds for every $n$-vertex graph $G$
with
$$(\delta - \xi)n \leq \delta(G) \leq \Delta(G) \leq (\delta + \xi)n.$$
\begin{enumerate}
\item Let $F$ be an $n$-vertex graph consisting of a union of vertex-disjoint cycles and let $\F$ be a collection of copies 
of F.  Further suppose $\delta> \frac{7+\sqrt{21}}{14}$ and $e(\F) \leq (1-\nu)e(G)$.  Then $\F$ packs into $G$.
\item Let $\C$ be a collection of cycles, each on at most $n$ vertices.  Further suppose $\delta > \frac{7+\sqrt{21}}{14}$ and
$e(\C) \leq (1-\nu)e(G)$.  Then $\C$ packs into $G$.
\end{enumerate}
\end{corollary}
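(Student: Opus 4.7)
The plan is to deduce this corollary from Corollary 1.4 of Condon, Kim, K\"uhn, and Osthus~\cite{band}; essentially nothing new needs to be proved beyond matching hypotheses. Their result takes an almost-regular host graph $G$ with $(\delta - \xi)n \leq \delta(G) \leq \Delta(G) \leq (\delta + \xi)n$ and produces packings of bounded-degree graphs into $G$ provided that $\delta$ exceeds the relevant fractional decomposition threshold and that the total edge count is at most $(1 - \nu) e(G)$.

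First I would observe that every cycle is either bipartite (when even) or $3$-chromatic (when odd), so the members of $\F$ in item (1) and of $\C$ in item (2) all have chromatic number at most $3$. The threshold that must be cleared in order to apply~\cite{band} is therefore $\max\{\delta^*_{K_3},\, 3/4\}$. By Theorem~\ref{thm:main} we have $\delta^*_{K_3} \leq \frac{7+\sqrt{21}}{14}$, and since $\frac{7+\sqrt{21}}{14} > \frac{3}{4}$, this maximum is bounded above by $\frac{7+\sqrt{21}}{14}$. Hence the hypothesis $\delta > \frac{7+\sqrt{21}}{14}$ assumed in both items of the corollary is exactly what is needed to meet the density requirement of~\cite{band}.

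Next I would verify the remaining structural hypotheses of Corollary 1.4 of~\cite{band}: cycles have maximum degree $2$, so any bounded-$\Delta$ assumption on the packed graphs is automatic; in item (1) the graphs in $\F$ are all isomorphic copies of a single $n$-vertex $2$-regular graph $F$ (a union of disjoint cycles); and in item (2) the members of $\C$ are cycles of length at most $n$. These are precisely the two configurations handled by the corresponding parts of~\cite{band}, and the edge budget $e(\F), e(\C) \leq (1-\nu) e(G)$ is passed directly to their statement. The packing conclusion then follows immediately.

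The only conceivable obstacle is a purely bookkeeping one, namely a subtle mismatch between our formulation and that of~\cite{band}: whether their corollary is phrased in terms of $\delta^*_{K_\chi}$ directly or via a Theorem~\ref{ChiDecomp}-style condition, whether there is an implicit bandwidth or $F$-divisibility hypothesis that must be checked for cycles (it is trivially satisfied, since the relevant divisibility conditions are preserved by the edge-count slack $\nu$), and whether the almost-regularity parameter $\xi$ must be chosen small relative to $\delta - \frac{7+\sqrt{21}}{14}$ (which is where the $\xi$ in our conclusion comes from). Each of these is mechanical to verify against the exact statement of~\cite{band}, and none requires any new combinatorial input beyond Theorem~\ref{thm:main}.
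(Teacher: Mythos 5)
Your proposal matches the paper's treatment: the paper gives no separate proof, simply noting that the statement follows immediately by plugging the new bound $\delta^*_{K_3} \le \frac{7+\sqrt{21}}{14}$ from Theorem~\ref{thm:main} into Corollary 1.4 of Condon, Kim, K\"uhn, and Osthus~\cite{band}, which is exactly the substitution you describe. Your remaining remarks (cycles have maximum degree $2$, $\frac{7+\sqrt{21}}{14} > \frac{3}{4}$, and the edge budget is passed through unchanged) are the same routine hypothesis-matching implicit in the paper's citation.
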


The next corollary for regular graphs follows immediately from our main result combined with the work of Condon, Kim, K\"uhn, and Osthus~\cite{band} (see Corollary 2.8 in~\cite{BCC} by Glock, K\"{u}hn, and Osthus):

\begin{corollary}
For all $\varepsilon > 0$, the following holds for sufficiently large $n$. Assume that $\F$ is a collection of 2-regular $n$-vertex graphs. Assume that $G$ is a $d$-regular $n$-vertex graph with $d \geq \left(\frac{7+\sqrt{21}}{14} + \varepsilon\right)n$. If $e(F)\leq(1 - \varepsilon)e(G)$, then $\F$ packs into $G$.
\end{corollary}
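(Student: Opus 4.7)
The plan is to apply Corollary~2.8 of \cite{BCC} as a black box, after reading off the correct quantitative value of the fractional triangle decomposition threshold from Theorem~\ref{thm:main}. Since the authors themselves describe this corollary as following ``immediately,'' the proof proposal is essentially a substitution, and the work lies in verifying that the hypotheses line up.

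First, I would record the explicit consequence of Theorem~\ref{thm:main}: the fractional triangle decomposition threshold satisfies
\[
\delta^*_{K_3} \;\leq\; \frac{7+\sqrt{21}}{14}.
\]
Indeed, Theorem~\ref{thm:main} asserts that every $n$-vertex graph with minimum degree at least $\frac{7+\sqrt{21}}{14}\,n$ admits a fractional $K_3$-decomposition, so by definition of $\delta^*_{K_3}$ (as a limsup of the least valid densities $\delta^*_{K_3}(n)$) this bound holds.

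Next, I would invoke Corollary~2.8 of \cite{BCC}, which is the distilled packing-into-regular-hosts consequence of the spanning-structure machinery of Condon, Kim, K\"uhn, and Osthus \cite{band}. In the form relevant here, it says: for every $\varepsilon > 0$, if $G$ is a $d$-regular $n$-vertex graph with $d \geq (\delta^*_{K_3} + \varepsilon)n$ and $\F$ is a collection of 2-regular $n$-vertex graphs with $e(\F) \leq (1-\varepsilon)e(G)$, then for $n$ sufficiently large $\F$ packs into $G$. Substituting the bound $\delta^*_{K_3} \leq \frac{7+\sqrt{21}}{14}$ into the degree hypothesis, the assumption $d \geq \left(\frac{7+\sqrt{21}}{14} + \varepsilon\right)n$ of the present statement is at least as strong, and the conclusion is exactly the desired packing.

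The only point requiring care is purely bookkeeping: checking that Corollary~2.8 of \cite{BCC} really is stated so that the minimum-degree assumption enters through $\delta^*_{K_3}$ (rather than through some other threshold parameter one would need to re-derive), and that the single $\varepsilon$ in our statement can simultaneously play the roles of the slack above $\delta^*_{K_3}$ and the slack on $e(\F)$, after a trivial rescaling (e.g.\ by replacing $\varepsilon$ in our hypothesis with $\varepsilon/2$ when feeding it to the cited corollary). There is no genuine combinatorial obstacle; all the difficulty has been absorbed into Theorem~\ref{thm:main} on the one hand and into \cite{band,BCC} on the other.
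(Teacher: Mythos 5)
Your proposal is correct and matches the paper's treatment: the paper itself offers no further argument, stating only that the corollary follows immediately from Theorem~\ref{thm:main} (which gives $\delta^*_{K_3}\le\frac{7+\sqrt{21}}{14}$) combined with Corollary~2.8 of~\cite{BCC} derived from~\cite{band}. Your bookkeeping remarks about the role of $\delta^*_{K_3}$ and the $\varepsilon$-slack are exactly the verification implicit in the paper's one-line justification.
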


Our approach for proving the fractional version is novel and differs from previous work on the Nash-Williams Conjecture in that we introduce two new concepts specifically developed for this problem that we refer to as \emph{delegation} and \emph{cancellation}.  We also rely on \emph{edge-gadgets} as introduced by Barber, K\"{u}hn, Lo, Montgomery, and Osthus~\cite{tri17} (a weight function for $K_r$s contained in a $K_{r+2}$) as well as tools from nonlinear optimization. We describe our two new ideas and overview the proof in the next section before proceeding to outline the rest of the paper.

\section{Overview of the Proof}

First in Subsection~\ref{OverviewIdeas}, we provide an overview of the ideas involved in the proof of Theorem~\ref{thm:main}. Then we outline the remainder of the paper in Subsection~\ref{Outline}.

\subsection{Overview}\label{OverviewIdeas}

Recall that a \emph{fractional triangle decomposition} is an assignment of non-negative weights to each triangle in a graph such that the sum of the weights along each edge is precisely 1.  An \emph{edge-gadget} (see Definition~\ref{def:gadget}) is a local redistribution of the weights of the triangles in a $K_5$ containing a given edge $e$ so as to increase only the weight of $e$ (while leaving the weights of all other edges unchanged).  Given any current triangle weighting, one can use edge-gadgets to satisfy any remaining demands of edges; each edge-gadget yields both some positive and some negative modifications for the triangle weights. Hence, the overuse of edge-gadgets could result in the final weight of some triangle being negative and hence the weighting not corresponding to a fractional triangle decomposition.   

When restricted to the case of triangle decompositions, Barber, K\"{u}hn, Lo, Montgomery, and Osthus' edge-gadget proof in~\cite{tri17} begins with a uniform initial positive weighting on triangles and distributes the remaining demand of each edge $e$ \emph{uniformly} over the edge-gadgets containing $e$.  We, however, use a \emph{non-uniform} distribution of the demand of each edge $e$ over the edge-gadgets containing $e$.  

Curiously, our method works identically for any uniform initial weighting of the triangles.  Hence for ease of reading we initialize the weights to be 0 (equivalently, we do not use any initial weighting).
 Thus the remaining demand on any edge is equal to its initial demand, namely 1.  We then \emph{delegate} this demand first through the triangles containing that edge, then through the $K_4$s containing each of those triangles, and then through the $K_5$s containing those $K_4$s. This defines a weighting on the edge-gadgets and in turn a weighting on the triangles. Note that such a delegation is only well-defined since our choice of minimum degree is strictly greater than $\frac{3}{4}n$ (and hence every edge is in a $K_5$). 

By virtue of the delegation process, it is clear that the final triangle weighting yields a weight of 1 across each edge.  Moreover, as detailed in the majority of the paper, the weight on each triangle is non-negative given our choice of minimum degree, and hence we obtain the desired fractional triangle decomposition.  To show the final weights of the triangles are non-negative requires a fair amount of work and the use of non-linear optimization. 

The key concept we invoke to verify this we refer to as \emph{cancellation} which we describe as follows. Cancellation is an attempt to pair over each triangle $T$ an edge-gadget (or set of edge-gadgets) from which $T$ receives negative modification to an edge-gadget (or set of edge-gadgets) from which $T$ receives positive modification; intuitively these contributions should mostly cancel out, leaving it easier to show a non-negative final weight. Crucially we perform these pairings only at the triangle level before demands are delegated to the $K_4$s and $K_5$s.  From then on in the process, these opposing demands effectively \emph{cancel out} since that triangle will delegate the demands of its edges uniformly to the $K_4$s containing it and subsequently those $K_4$s will delegate uniformly to the $K_5$s containing them.

Indeed, this idea is not just intuition, we formally make use of this as follows. The proof that each triangle has non-negative final weight proceeds by setting up a related maximization program to be solved. After symmetrizing the variables, we are left with a 10 variable non-linear optimization program  whose objective value is the sum of three terms. Each term is some ratio of positive factors times a difference of two variables (each difference corresponding to a cancellation of two edge-gadgets). We then form a new program by replacing each term with its \emph{ramp} (i.e.~the maximum of itself and 0). This ensures that each term is now non-negative which is essential to our solution of the program.

To solve the 10 variable program, we proceed to reduce the number of variables, first to six, then one at a time in the right order, crucially using the fact that all the terms now have non-negative factors (in fact strictly positive for those factors in the denominator). This makes the reductions fairly straightforward if tedious. The final two variables are the hardest to reduce. However at that stage, the terms are in fact guaranteed to be non-negative and so the ramps are no longer necessary. We then use partial derivatives to find a maximum point. Finally we evaluate the objective function at this maximum point; indeed, our value of minimum degree is precisely the point where the final triangle weights are non-negative.

\subsection{Outline of Paper}\label{Outline}

In Section~\ref{GadgetsAndMain}, we define edge-gadgets and our weighting of them. We state our main technical theorem (Theorem~\ref{RealMain}) and then prove Theorem~\ref{thm:main} assuming Theorem~\ref{RealMain}. Finally, we reformulate the problem for convenience during optimization.
 
In Section~\ref{Opt}, we formally state this problem as a maximization program and use a symmetrization argument to reduce the number of variables. In Section~\ref{Solving}, we formulate the new ramping program mentioned in the subsection above and solve said program by slowly reducing the number of variables. We conclude that section by proving Theorem~\ref{RealMain}.

Finally in Section~\ref{Further}, we discuss how our methods could be used to make further improvements on the Nash-Williams Conjecture.

\section{Edge-Gadgets and Proof of Main Theorem}\label{GadgetsAndMain}

In Subsection~\ref{Gadgets}, we define edge-gadgets formally. In Subsection~\ref{Weighting}, we formally define our weighting of the edge-gadgets. We prove how the weighting yields a weight of $1$ on each edge. We then state our main technical theorem (Theorem~\ref{RealMain}) that the resulting weight on triangles is non-negative for our choice of minimum degree and then prove Theorem~\ref{thm:main} assuming Theorem~\ref{RealMain}. In Subsection~\ref{Reform}, we then reformulate this theorem in a more manageable form that involves cancellation (while also transforming it into a maximization problem).

\subsection{Edge-Gadgets}\label{Gadgets}

First we formalize some notation for the set of cliques containing a given smaller clique as follows.

\begin{definition}
Let $G$ be a graph. We let $\cK_{\ell}(G)$ denote the set of cliques in $G$ on exactly $\ell$ vertices. For a subgraph $H \subseteq G$, we let $\cK_{\ell}(G,H)$ denote the set of elements in $\cK_{\ell}(G)$ that contain $H$ as a subgraph.  For $S\subseteq V(G)$, we let $\cK_{\ell}(G,S) := \cK_{\ell}(G,G[S])$.
\end{definition}

Next we formally define fractional triangle decomposition in terms of weightings.

\begin{definition}

A \emph{fractional triangle decomposition} of a graph $G$ is, equivalently to the definition given before,
a {\bf non-negative} function  $w: \cK_3(G) \rightarrow \bR$ such that for every $e\in E(G)$, $$\sum_{T\in \cK_3(G, e)} w(T) = 1.$$
\end{definition}

Now we present the definition of an \emph{edge-gadget} introduced by Barber, K\"{u}hn, Lo, Montgomery, and Osthus~\cite{tri17}, as follows.

\begin{definition}\label{def:gadget}
Let $G$ be a graph. For $K\in \cK_5(G)$ and $e\in E(K)$, let $E_i(K,e) = \left\{f \in E(K): |e \cap f|=i\right\}$ and $T_j(K,e) = \left\{T \in \cK_3(K): |e \cap T|=j\right\}$. \\

The \emph{edge-gadget} of $e$ in $K$ is a function $\displaystyle\psi_{K,e}: \cK_3(G) \rightarrow \bR$ with 
 \begin{equation*}
  \psi_{K,e}(T) = \begin{cases}
        +\frac{1}{3}, \text{ if } T\in T_0(K,e),\\
				-\frac{1}{6}, \text{ if } T\in T_1(K,e),\\
				+\frac{1}{3}, \text{ if } T\in T_2(K,e),\text{ and}\\
				0, \text{ otherwise.}
        \end{cases}
 \end{equation*}
\end{definition}

Edge-gadgets are useful in that they assign a non-zero weight (scaled to be 1) to precisely one edge as the next proposition notes.

\begin{proposition}\label{EdgeGadgetWeight}
Let $e \in E(G)$ and $K \in \cK_5(G)$.  If $f\in E(G)$, then 
 \begin{equation*}
  \sum_{T \in \cK_3(G,f)}\psi_{K,e}(T) = \begin{cases}
        1, \text{ if } f=e,\text{ and}\\
				0, \text{ otherwise.}
        \end{cases}
 \end{equation*}
\end{proposition}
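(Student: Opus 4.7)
The plan is to reduce the sum, which is a priori over the possibly large set $\cK_3(G,f)$, to a finite case analysis inside the fixed $K_5$. The key observation is that by Definition~\ref{def:gadget} the function $\psi_{K,e}$ vanishes on every triangle $T \notin \cK_3(K)$, so only triangles with $V(T) \subseteq V(K)$ containing $f$ can contribute. In particular, if $f \notin E(K)$ the sum is vacuously $0$, so I may assume $f \in E(K)$ and then split into three cases according to whether $f$ lies in $E_0(K,e)$, $E_1(K,e)$, or $E_2(K,e)$.

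Fix notation $V(K) = \{a,b,c,d,x\}$ with $e = \{a,b\}$. Each edge of $K$ is contained in exactly three triangles of $K$ (one per choice of third vertex), so in each case I simply enumerate these three triangles and read off the value of $\psi_{K,e}$ on each by determining the class $T_j(K,e)$ it belongs to. If $f = e$, then all three triangles contain both $a$ and $b$, so they all lie in $T_2(K,e)$, contributing $3 \cdot \tfrac{1}{3} = 1$. If $f \in E_1(K,e)$, say $f=\{a,c\}$, then the triangle $\{a,b,c\}$ lies in $T_2(K,e)$ while $\{a,c,d\}$ and $\{a,c,x\}$ lie in $T_1(K,e)$, giving $\tfrac{1}{3} + 2 \cdot (-\tfrac{1}{6}) = 0$. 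If $f \in E_0(K,e)$, say $f=\{c,d\}$, then $\{c,d,x\}$ lies in $T_0(K,e)$ while $\{a,c,d\}$ and $\{b,c,d\}$ lie in $T_1(K,e)$, again giving $\tfrac{1}{3} + 2 \cdot (-\tfrac{1}{6}) = 0$.

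There is no real obstacle: the proposition is essentially bookkeeping on the $\binom{5}{3}=10$ triangles of a $K_5$, and the weights $+\tfrac{1}{3}, -\tfrac{1}{6}, +\tfrac{1}{3}$ in Definition~\ref{def:gadget} are manifestly chosen so that the cancellations at every $f \neq e$ happen automatically. The only thing to be careful about is to make the reduction to ``$f$ is an edge of $K$'' explicit before starting the case analysis, since the sum in the statement ranges over all triangles of $G$, not only those of $K$.
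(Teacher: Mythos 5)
Your proposal is correct and matches the paper's proof: both reduce to the observation that $\psi_{K,e}$ is supported on triangles of $K$ and then do the same case analysis over $f=e$, $f\in E_1(K,e)$, $f\in E_0(K,e)$, and $f\notin E(K)$, with identical arithmetic. Your explicit enumeration of the three triangles of $K$ containing $f$ is just a slightly more detailed write-up of the same computation.
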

\begin{proof}

Let
$S(f) = \sum_{T \in \cK_3(G,f)}\psi_{K,e}(T).$  If $f=e$, then $f\in E_2(K,e)$ and hence $S(f) = 3\cdot\frac{1}{3}=1$ as desired. So suppose $f\ne e$. If $f\in E_1(K,e)$, then $S(f) = \frac{1}{3} -\frac{1}{6}\cdot 2 =0$ as desired. 
If $f\in E_0(K,e)$, then $S(f) = -\frac{1}{6} \cdot 2 + \frac{1}{3} = 0$ as desired. 
Finally if $f\in E(G)\setminus E(K)$, then $S(f) = 0$ as desired.
\end{proof}

\subsection{Our Weighting}\label{Weighting}

The proofs in~\cite{tri17} and~\cite{D15} begin with an essentially uniform initial weighting of copies of $K_3$ and via local moves use the edge-gadgets to obtain a fractional $K_3$-decomposition.  Using a random process, Montgomery~\cite{Montgomery} instead starts with an initial weighting that is closer to a fractional $K_3$-decomposition.  In this work, we pick our initial weighting in a different way and utilize cancellations to obtain a fractional $K_3$-decomposition.

We define a weight of an edge-gadget $K$ in the following way. Instead of distributing uniformly over copies of $K_5$ containing $e$, we do the following.  We distribute uniformly over $\cK_3(G,e)$.  Then for each $T \in \cK_3(G,e)$ we distribute uniformly over $\cK_4(G,T)$.  Finally for each $K \in \cK_4(G,T)$, we distribute uniformly over $\cK_5(G,K)$. (This is the \emph{delegation} described before).

To formalize this, we need the following definitions. First, we need ordered cliques as follows.

\begin{definition}
Let $G$ be a graph. An \emph{ordered $r$-clique} of $G$ is an $r$-tuple ($v_1,v_2,\ldots, v_r$) such that $v_1,\ldots, v_r \in V(G)$ and $G[\{v_1,\ldots, v_r\}] \in \cK_r(G)$. We let $\cOK_{r}(G)$ denote the set of ordered $r$-cliques in $G$. If $K=(v_1,\ldots, v_r)\in \cOK_r(G)$, then we let $V(K) = \{v_1,\ldots, v_r\}$.
\end{definition}

Next we need some notation for the ordered cliques containing a subgraph (or set of vertices).
\begin{definition}
Let $G$ be a graph. For a subgraph $H \subseteq G$, we let $\cOK_{r}(G,H)$ denote the set of elements $K\in \cOK_{r}(G)$ such that $V(H)\subseteq V(K)$.  For $S\subseteq V(G)$, we let $\cOK_{r}(G,S) := \cK_{r}(G,G[S])$.
\end{definition}

Then we need to define containing an ordered subgraph and the set of ordered cliques containing a smaller ordered clique as an ordered subgraph.

\begin{definition}
Let $G$ be a graph and $s\ge r \ge 1$. Let $H_1= (v_1,\ldots, v_s)\in \cOK_s(G)$ and $H_2=(u_1,\ldots,u_r) \in \cOK_r(G)$. We say $H_1$ is an \emph{ordered subgraph} of $H_2$ if $u_1\ldots u_r$ is a (not necessarily consecutive) subsequence of $v_1\ldots v_s$. For an ordered $r$-clique $H \subseteq G$, we let for every $s\ge r$, $\cOK_{s}(G,H)$ denote the set of elements in $\cOK_{s}(G)$ that contain $H$ as an ordered subgraph. 
\end{definition}

We are now ready to define a weight on ordered cliques as follows.

\begin{definition}
Let $G$ be a graph and let $r\in \{2,3,4\}$. For every $K=(v_1,\ldots, v_r)\in \cOK_r(G)$, we define a weight 

$$W(K) = \prod_{i=2}^r \frac{1}{|\cK_{i+1}(G, \{v_1, \ldots, v_i\})|}.$$

\noindent For ease of reading, we will let $W(v_1,\ldots, v_r) := W(K)$.
\end{definition}

We also need to extend $\psi_{K,e}(T)$ to ordered cliques $K$ and to ordered triangles $T$ (these will have the same value; this is just for convenience).

\begin{definition}
Let $G$ be a graph. If $K=(v_1,\ldots, v_5) \in \cOK_5(G)$ and $T\in \cK_3(G)$, then we define $\psi_{K}(T) := \psi_{G[V(K)],v_1v_2}(T)$. Similarly if $O\in \cOK_3(G)$, then we define $\psi_{K}(O) := \psi_{K}(G[V(O)])$.
\end{definition}

We are now ready to define our weight function on triangles as follows.

\begin{definition}\label{def:weight}
Let $G$ be a graph.  We define a function $w_G:\cK_3(G) \rightarrow \bR$ as
$$w_G(T) := \frac{1}{2} \cdot \sum_{K=(v_1,\ldots, v_5)\in \cOK_5(G)} W(v_1,\ldots, v_4) \cdot \psi_{K}(T).$$
\end{definition}

The following proposition shows that our weighting of the triangles yields a weight of 1 on each edge.

\begin{proposition}\label{PropWeight}
Let $G$ be a graph with minimum degree $\delta(G) > \frac{3}{4} \cdot v(G)$.  If $e \in E(G)$, then 
$$\sum_{T \in \cK_3(G,e)}w_G(T)=1.$$
\end{proposition}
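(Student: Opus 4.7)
The plan is to swap the order of summation and exploit the telescoping structure of $W$ together with Proposition~\ref{EdgeGadgetWeight}. Unrolling the definition of $w_G$ gives
\begin{equation*}
\sum_{T \in \cK_3(G,e)} w_G(T) \;=\; \frac{1}{2} \sum_{K=(v_1,\ldots,v_5) \in \cOK_5(G)} W(v_1,\ldots,v_4) \sum_{T \in \cK_3(G,e)} \psi_{K}(T).
\end{equation*}
Since $\psi_K(T) = \psi_{G[V(K)],v_1v_2}(T)$ and any triangle $T \not\subseteq V(K)$ contributes $0$, Proposition~\ref{EdgeGadgetWeight} applied inside $G[V(K)]$ shows that the inner sum equals $1$ when $\{v_1,v_2\} = e$ and $0$ otherwise. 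So it suffices to prove that the total $W$-weight of ordered $5$-cliques $(v_1,\ldots,v_5) \in \cOK_5(G)$ with $\{v_1,v_2\} = e$ equals $2$.

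To show this, I would peel off the variables in the order $v_5, v_4, v_3, (v_1,v_2)$. Because $W(v_1,\ldots,v_4)$ contains the factor $1/|\cK_5(G,\{v_1,\ldots,v_4\})|$ and is otherwise independent of $v_5$, summing over the admissible choices of $v_5$ collapses that factor to $1$. Repeating the same telescoping step eliminates the factor corresponding to $v_4$ and then the factor corresponding to $v_3$, each layer contributing $1$. The two orderings $(v_1,v_2) \in \{(a,b),(b,a)\}$ of $e = \{a,b\}$ then produce a factor of $2$, which cancels the $\tfrac{1}{2}$ prefactor in the definition of $w_G$ and delivers the claimed value $1$.

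I do not anticipate a real obstacle here. The two technical points to check carefully are that Proposition~\ref{EdgeGadgetWeight} is applied with the correct identification between the edge of the gadget and the target edge $e$ (as unordered edges), and that every denominator appearing in $W$ is strictly positive so that the weights are well-defined. The latter follows from the hypothesis $\delta(G) > \tfrac{3}{4} v(G)$: an elementary common-neighbor count gives $|\cK_{i+1}(G,S)| \ge i\,\delta(G) - (i-1)\,v(G) > 0$ for any clique $S$ of size $i \in \{2,3,4\}$, so each factor in the product defining $W$ is finite and positive.
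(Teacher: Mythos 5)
Your proposal is correct and follows essentially the same route as the paper's proof: swap the two sums, invoke Proposition~\ref{EdgeGadgetWeight} to reduce to the ordered $5$-cliques whose first two coordinates form $e$, and then telescope the factors of $W$ layer by layer, with the two orderings of $e$ cancelling the prefactor $\tfrac{1}{2}$. Your added remark on the strict positivity of the denominators under $\delta(G) > \tfrac{3}{4}v(G)$ is exactly the well-definedness observation the paper makes at the start of its proof.
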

\begin{proof}
Note that as $\delta(G) > \frac{3}{4} \cdot v(G)$, we have for every $r\in\{2,3,4\}$ and $S\in \cOK_r(G)$ that $W(S)$ is well-defined and strictly positive.

Now let $W_e = \sum_{T \in \cK_3(G,e)}w_G(T)$. Using the definition of $w_G(T)$, we find that

$$W_e = \sum_{T \in \cK_3(G,e)}\Bigg( \frac{1}{2} \cdot \sum_{K=(v_1,\ldots, v_5)\in \cOK_5(G)} W(v_1,\ldots, v_4) \cdot \psi_{K}(T) \Bigg).$$

\noindent Rearranging sums, we find that

$$W_e = \frac{1}{2} \cdot \sum_{K=(v_1,\ldots, v_5)\in \cOK_5(G)} W(v_1,\ldots, v_4) \Bigg(\sum_{T\in \cK_3(G,e)} \psi_{K}(T) \Bigg).$$

\noindent By Proposition~\ref{EdgeGadgetWeight}, $\sum_{T \in \cK_3(G,e)} \psi_{K}(T) = 1$ if $e=v_1v_2$ and $0$ otherwise. Hence, we have that

\begin{align*}
W_e &= \frac{1}{2} \mathop{\sum_{K=(v_1,\ldots, v_5)\in \cOK_5(G):}}_{e=v_1v_2} W(v_1,\ldots, v_4)=\frac{1}{2} \mathop{\sum_{K=(v_1,\ldots, v_4)\in \cOK_4(G):}}_{e=v_1v_2} W(v_1,\ldots, v_3)\\
&=\frac{1}{2} \mathop{\sum_{K=(v_1, v_2, v_3)\in \cOK_3(G):}}_{e=v_1v_2} W(v_1, v_2)=\frac{1}{2} \mathop{\sum_{K=(v_1, v_2)\in \cOK_2(G):}}_{e=v_1v_2} 1= 1,
\end{align*}

\noindent as desired.
\end{proof}

Hence $w_G$ is a fractional triangle decomposition provided that $w_G$ is non-negative. Thus the remainder of the paper is devoted to proving the following result.

\begin{theorem}\label{RealMain}
Let $G$ be a graph. If $\delta(G) \ge (1-d)v(G)$ where $d=\frac{7 - \sqrt{21}}{14} > 0.17267$, then for every $T\in \cK_3(G)$,

$$w_G(T)\ge 0.$$
\end{theorem}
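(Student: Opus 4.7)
The plan is to reduce the nonnegativity of $w_G(T)$ for a fixed triangle $T = \{x,y,z\}$ to a finite-dimensional nonlinear optimization over a ``local profile'' of $T$ in $G$. First I would unpack the definition of $w_G(T)$ by splitting the sum over ordered $5$-cliques $K = (v_1,\ldots,v_5)$ according to how $\{v_1,v_2\}$ (the designated edge of the gadget) meets $V(T)$: only the cases $|\{v_1,v_2\}\cap V(T)| \in \{0,1,2\}$ contribute, and in each case $V(K)$ must consist of $V(T)$ together with a controlled number of common neighbors of various sub-cliques of $T$. Combined with the product formula for $W(v_1,\ldots,v_4)$, each surviving term becomes a product of reciprocals of counts of cliques containing a given ordered sub-clique. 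Normalizing these counts by $n$ gives a bounded set of real parameters---something like $|N(x)\cap N(y)|/n$, $|N(x)\cap N(y)\cap N(z)|/n$, and their extensions to $4$-cliques through $T$---which are the variables of the program.

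Next I would implement the \emph{cancellation} step at the triangle level: for each ordered $5$-clique producing a $-1/6$ contribution to $\psi_K(T)$, I would match it with a naturally corresponding ordered $5$-clique producing a $+1/3$ contribution, pairing them \emph{before} the $K_4$ and $K_5$ layers of delegation are invoked. Symmetrizing over the $3!$ orderings of the vertices of $T$ and over the orderings of the two extra vertices of each $K_5$ should collapse the sum to a compact form: a bound on $-w_G(T)$ expressed as a sum of three terms, where each term is a strictly positive rational factor times a difference of two variables (one difference per cancellation). At this point I would reduce to roughly $10$ symmetrized variables, each constrained by the minimum-degree hypothesis $\delta(G)\ge (1-d)n$.

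To bound the objective from above I would replace each of the three terms by its \emph{ramp} $\max(\cdot,0)$; this can only increase the sum but makes every summand nonnegative, which is crucial because it lets me bound the positive prefactors independently (in particular, maximize prefactors that sit in denominators by pushing the relevant clique counts to their extreme allowed values). I would then eliminate variables one at a time in a carefully chosen order---first those that appear in only one ramped term, reducing from $10$ to $6$, then continuing until only two variables remain. In that final two-variable stage the ramps can be verified to be inactive (every term is genuinely nonnegative), so I can drop the maxes, write the objective as a smooth function on a rectangle in $\mathbb{R}^2$, compute partial derivatives, locate an interior critical point, and check it is a maximum.

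The main obstacle will be the final two-variable calculus step: because the claimed threshold $d = (7-\sqrt{21})/14$ is the exact algebraic solution at which the supremum of the ramped objective equals $0$, the earlier reductions have essentially no slack---every monotonicity argument has to push variables to the precise boundary values dictated by the minimum-degree constraint, and the final maximum must come out to a quadratic whose nonpositivity is equivalent to $14d^2 - 14d + 1 \le 0$ (whose larger root is exactly $d$). A secondary but substantial difficulty is the bookkeeping behind the cancellation pairing: one must verify that every $-1/6$ contribution is matched exactly once with a $+1/3$ partner with a compatible denominator, so that after symmetrization the expression really does reduce to three differences and not to an unmanageable combination of leftover terms. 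Once these two pieces are in place, substituting the critical point into the reduced objective yields $w_G(T)\ge 0$, proving Theorem~\ref{RealMain}.
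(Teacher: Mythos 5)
Your plan follows essentially the same route as the paper: expand $w_G(T)$ over ordered $5$-cliques, pair the $-1/6$ and $+1/3$ contributions at the triangle level (the cancellation), normalize clique counts by $n$ to get a density program, symmetrize down to ten variables, replace the three cancellation terms by their ramps, eliminate variables one at a time using monotonicity of the positive factors, and finish with a small calculus argument. The only mispredictions are in the endgame, and they are harmless to the method: the maximum is attained at a corner of the domain (all densities pushed to their degree-forced extremes, e.g.\ $a=b=0$ and $x=y=1-d$ in the paper's final coordinates), not at an interior critical point, and the terminal inequality is $\frac{3d(1-d)}{(1-2d)^2}\le 1$, i.e.\ $7d^2-7d+1\ge 0$ with $d=\frac{7-\sqrt{21}}{14}$ its smaller root, rather than $14d^2-14d+1\le 0$.
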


Assuming Theorem~\ref{RealMain}, we are now able to prove Theorem~\ref{thm:main}.

\begin{proof}[Proof of Theorem~\ref{thm:main}]
By Proposition~\ref{PropWeight}, for every $e \in E(G)$, we have that $$\sum_{T \in \cK_3(G,e)}w_G(T)=1.$$ By Theorem~\ref{RealMain}, $w_G$ is non-negative. Hence $w_G$ is a fractional triangle decomposition of $G$ as desired.
\end{proof}

\subsection{Reformulation}\label{Reform}

In fact, we prove a stronger theorem than Theorem~\ref{RealMain} as follows. First we define a weight function on ordered triangles.

\begin{definition}
We define a function $w_G:\cOK_3(G) \rightarrow \bR$ as
$$w_G(O) := \frac{1}{2} \cdot \sum_{K=(v_1,\ldots, v_5)\in \cOK_5(G,O)} W(v_1,\ldots, v_4) \cdot \psi_{K}(O).$$
\end{definition}

Clearly if $T\in \cK_3(G)$, then $w_G(T) = \sum_{O\in \cOK_3(G,T)} w_G(O).$ Hence to prove Theorem~\ref{RealMain}, it suffices to prove the following:

\begin{theorem}\label{RealMain2}
Let $G$ be a graph. If $\delta(G) \ge (1-d)v(G)$ where $d=\frac{7 - \sqrt{21}}{14} > 0.17267$, then for every $O\in \cOK_3(G)$,
$$w_G(O)\ge 0.$$
\end{theorem}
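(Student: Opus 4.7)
The plan is to expand $w_G(O)$ for a fixed ordered triangle $O = (u_1, u_2, u_3)$ as an explicit signed sum over ordered $5$-cliques $K = (v_1,\ldots,v_5)$ containing $O$ as an ordered subgraph. Because $\psi_K(O) \in \{+1/3,\, -1/6,\, +1/3\}$ depending on whether $|V(O) \cap \{v_1, v_2\}| = 0, 1$, or $2$, partitioning by this intersection pattern together with the positions of $v_4, v_5$ reduces $w_G(O)$ to a finite combination of terms, each a product of inverse codegree factors (coming from the $W$-weights) and a count of common neighbors of some subset of $V(O)$. Normalizing each codegree count to a density and invoking $\delta(G) \ge (1-d) v(G)$ confines every density to $[1-3d,\, 1]$, so the claim $w_G(O) \ge 0$ becomes a constrained inequality in real variables.

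Next, I would apply the \emph{cancellation} idea from the overview: whenever two ordered $5$-cliques $K, K'$ extending $O$ share the same $W$-prefix $W(v_1,\ldots,v_4)$ but contribute with opposite signs to $\psi(O)$, I would pair them and factor out the common positive weight, rewriting the paired contribution as a positive rational coefficient times a difference of two normalized codegree variables. After collecting all such pairings and averaging over the three labellings of $V(O)$, the problem reduces to showing
$$A(\mathbf{x}) \;+\; \sum_{k=1}^{3} \frac{P_k(\mathbf{x})}{Q_k(\mathbf{x})}\bigl(x_{i_k} - x_{j_k}\bigr) \;\ge\; 0$$
for all feasible $\mathbf{x}$ in the resulting $10$-dimensional density box, where $A \ge 0$, each $P_k, Q_k$ is a positive polynomial, and the three differences encode independent cancellations.

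Since the differences $x_{i_k} - x_{j_k}$ may individually be negative, I would pass to the associated \emph{ramp} program by replacing each negative-direction contribution by $\max\bigl(-\tfrac{P_k}{Q_k}(x_{i_k}-x_{j_k}),\,0\bigr)$; it then suffices to show that $A(\mathbf{x})$ dominates the sum of these ramped losses. In the ramp program every term is a non-negative quantity multiplied by a strictly positive rational prefactor, which enables a greedy elimination of variables: variables that appear only inside a $P_k/Q_k$ prefactor can be pushed to the extreme of their feasible interval that maximizes the prefactor, variables appearing inside an active ramp can be set using monotonicity of that ramp, and variables whose ramp is inactive can be fixed freely. In this way the problem is reduced first from ten variables to six, and then, one variable at a time in a carefully chosen order, down to two.

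The hard step is the final two-variable optimization. By that stage the three ramps can be verified to be genuinely active (the relevant differences are forced non-negative), so the ramps disappear and the objective becomes a smooth rational function of two variables on a compact semi-algebraic region cut out by the minimum-degree constraints. I would set both partial derivatives to zero, solve the resulting critical-point system, verify that the critical point lies in the interior of the feasible region, and check that the boundary gives no larger value. Substituting back into the objective reduces the inequality to a quadratic in the density parameter $d$; its relevant root in $(0,1)$ is exactly $d = (7 - \sqrt{21})/14$, showing that this value is the precise threshold at which $w_G(O) \ge 0$ is forced, and hence yielding Theorem~\ref{RealMain2}.
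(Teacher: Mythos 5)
Your plan follows the paper's route almost exactly: expand $w_G(O)$ over ordered $5$-cliques, pair opposite-sign contributions sharing a common $W$-prefix (the paper's Lemma~\ref{recast}), normalize to common-neighbour densities, pass to a $10$-variable program, ramp the three cancellation terms, eliminate variables one at a time, and finish with a low-dimensional optimization whose value-$1$ threshold is the root of $7d^2-7d+1=0$, i.e.\ $d=\tfrac{7-\sqrt{21}}{14}$. However, two steps as you describe them would not go through. First, the feasible region cannot be a product box with every density in $[1-3d,1]$. With box constraints alone the very first ramped term, of the shape $r_0(e_0-e)^+/(q_0e)$, already exceeds $1$ at $d=\tfrac{7-\sqrt{21}}{14}$ (take $e$ and $q_0$ at their floors and $e_0,r_0$ near $1$, so that $e_0-e$ can be as large as $2d$), so a box-constrained program certifies only a worse constant. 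The paper's program (P1) needs the \emph{coupled} supermodularity constraints coming from Proposition~\ref{IntersectionBound} (e.g.\ $e\ge x+y-1$, $q_0\ge e+e_0-x$, $q\ge e+f-y$, $p\ge q_0+f-y$, which in particular force $e_0-e\le d$) together with the number-of-terms constraints $r_0\le e_0$ and $r\le q_0$ bounding the normalized counts of common neighbours; these couplings are exactly what makes the final quadratic come out as $7d^2-7d+1$ rather than something weaker.

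Second, the reduction to finitely many ($10$) variables is not achieved by averaging over the three labellings of $V(O)$. The exact expression for $w_G(O)$ contains one block of density variables for \emph{each} common neighbour $y$ and each $z\in N(y)\cap R$, so there are unboundedly many variables and the inner summands genuinely depend on $y$ and $z$; averaging over labellings of the triangle does nothing to collapse them. The paper handles this by a symmetrization/worst-term argument (Lemmas~\ref{Lem1To2} and~\ref{Lem2To3}): each inner sum is replaced by its number of terms times a maximizing summand, and this is precisely where the constraints $r_0\le e_0$, $r\le q_0$ enter the objective. A minor further point: at the two-variable stage the maximum is attained at the corner of the feasible square (in the paper's coordinates $a=b=0$), not at an interior critical point, so the interior critical-point system you propose to solve has no solution in the region and the boundary analysis you mention in passing is in fact the whole argument there (Lemmas~\ref{Lem9To10} and~\ref{Lem10To11}). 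With these corrections your outline coincides with the paper's proof.
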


The key idea to proving Theorem~\ref{RealMain2} is to collect the terms in $w_G(O)$ according to how $O$ appears as a subsequence of $K$. In particular, we will then pair the terms which have the same set of vertices in their first three positions as follows. (This is the \emph{cancellation} described earlier.)

We may now rewrite $w_G(O)$ as follows.
\begin{lemma}\label{recast}
If $O=(x_1,x_2,x_3) \in \cOK_3(G)$ and $R=\bigcap_{i=1}^3 N(x_i)$, then
\begin{align*} w_G(O) = &\frac{1}{6} \Bigg( W(x_1,x_2) -\sum_{y\in R} \Bigg( W(x_1,y,x_2) - W(x_1,x_2,y) \\
&+ \sum_{z\in N(y)\cap R}\Bigg( W(x_1,y,x_2,z) - W(x_1,x_2,y,z) + W(x_1,y,z,x_2) - W(z,y,x_1,x_2)\Bigg) \Bigg) \Bigg).
\end{align*}
\end{lemma}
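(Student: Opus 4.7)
\medskip
\noindent\textbf{Proof plan for Lemma~\ref{recast}.}

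The plan is to unfold the definition of $w_G(O)$ directly and sort the resulting sum according to where the three vertices of $O=(x_1,x_2,x_3)$ sit inside the ambient ordered $5$-clique $K=(v_1,\ldots,v_5)$. Any $K\in\cOK_5(G,O)$ is determined by (a) a choice of three positions $1\le i_1<i_2<i_3\le 5$ into which $x_1,x_2,x_3$ are placed in order, and (b) an ordered pair $(y,z)$ of distinct vertices with $y,z\in R$ and $yz\in E(G)$ slotted into the two remaining positions. This gives a partition of the sum defining $w_G(O)$ into $\binom{5}{3}=10$ ``patterns.'' For each pattern the factor $\psi_K(O)$ is constant on the pattern: it equals $+\tfrac{1}{3}$ when $\{v_1,v_2\}\subseteq\{x_1,x_2,x_3\}$ (patterns $(1,2,3)$, $(1,2,4)$, $(1,2,5)$), equals $+\tfrac{1}{3}$ when $\{v_1,v_2\}\cap\{x_1,x_2,x_3\}=\emptyset$ (pattern $(3,4,5)$), and equals $-\tfrac{1}{6}$ otherwise (the remaining six patterns).

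Two elementary facts about $W$ then do all of the work. First, the \emph{telescoping identity}
$$W(v_1,\ldots,v_r)=W(v_1,\ldots,v_{r-1})\cdot \frac{1}{|\cK_{r+1}(G,\{v_1,\ldots,v_r\})|}$$
lets one ``sum out'' any free vertex that appears past the fourth slot. Concretely, whenever $z$ occupies the fifth slot the sum $\sum_{z\in N(y)\cap R}$ contributes exactly $|\cK_5(G,\{v_1,\ldots,v_4\})|$ copies of $W(v_1,\ldots,v_4)$ and collapses to $W(v_1,v_2,v_3)$; in pattern $(1,2,3)$ both $y$ and $z$ are past the fourth slot, so iterating the collapse yields $W(x_1,x_2)$. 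Second, the \emph{prefix-set invariance}: $W(v_1,\ldots,v_r)$ depends only on $v_r$ and on the chain of sets $\{v_1,v_2\}\subset\cdots\subset\{v_1,\ldots,v_{r-1}\}$, not on the internal ordering of each prefix. Hence
\begin{align*}
W(y,x_1,x_2,x_3)&=W(x_1,y,x_2,x_3),\\
W(y,x_1,x_2,z)&=W(x_1,y,x_2,z),\\
W(y,x_1,z,x_2)&=W(x_1,y,z,x_2),\\
W(y,z,x_1,x_2)&=W(z,y,x_1,x_2),
\end{align*}
so the six patterns of type $-\tfrac{1}{6}$ pair up naturally (cases $(1,3,4)\leftrightarrow(2,3,4)$, $(1,3,5)\leftrightarrow(2,3,5)$, $(1,4,5)\leftrightarrow(2,4,5)$), and in pattern $(3,4,5)$ the remaining $y\leftrightarrow z$ swap in the ordered-pair sum lets one rewrite the summand as $W(z,y,x_1,x_2)$ exactly as required by the statement.

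Executing the bookkeeping pattern by pattern and combining the $\tfrac{1}{2}\cdot\psi_K(O)$ prefactors gives: pattern $(1,2,3)$ contributes $\tfrac{1}{6}W(x_1,x_2)$; pattern $(1,2,4)$ contributes $+\tfrac{1}{6}\sum_{y\in R}W(x_1,x_2,y)$ (after collapsing $z$); pattern $(1,2,5)$ contributes $+\tfrac{1}{6}\sum_y\sum_{z\in N(y)\cap R}W(x_1,x_2,y,z)$; the combined pair $(1,3,4)$+$(2,3,4)$ contributes $-\tfrac{1}{6}\sum_yW(x_1,y,x_2)$; the pair $(1,3,5)$+$(2,3,5)$ contributes $-\tfrac{1}{6}\sum_y\sum_zW(x_1,y,x_2,z)$; the pair $(1,4,5)$+$(2,4,5)$ contributes $-\tfrac{1}{6}\sum_y\sum_zW(x_1,y,z,x_2)$; and pattern $(3,4,5)$ contributes $+\tfrac{1}{6}\sum_y\sum_zW(z,y,x_1,x_2)$. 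Summing produces exactly the right-hand side of the lemma. The main obstacle is purely clerical: it takes care to enumerate the ten patterns, to verify the prefix-set equalities that allow mirror patterns to be combined, and to apply the telescoping identity correctly in the three patterns where the ``extra'' vertex $z$ lies past the fourth slot. Once this accounting is in place there is no further difficulty.
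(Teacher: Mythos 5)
Your proposal is correct and follows essentially the same route as the paper: expand $w_G(O)$ over the ten possible position patterns of $(x_1,x_2,x_3)$ inside $K$, use the invariance of $W$ under swapping its first two coordinates to merge the mirror patterns, and apply the telescoping identity $W(v_1,\ldots,v_r)\cdot|\cK_{r+1}(G,\{v_1,\ldots,v_r\})| = W(v_1,\ldots,v_{r-1})$ to collapse the sums over the free vertices. (Only a cosmetic slip: in pattern $(1,2,3)$ the vertex $y$ sits \emph{in} the fourth slot rather than past it, but your iterated collapse to $W(x_1,x_2)$ is exactly right.)
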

\begin{proof}
By definition

$$w_G(O) := \frac{1}{2} \cdot \sum_{K=(v_1,\ldots, v_5)\in \cOK_5(G,O)} W(v_1,\ldots, v_4) \cdot \psi_{K}(O).$$

\noindent Yet for every $K=(v_1,\ldots, v_5)\in \cOK_5(G,O)$, we have by Proposition~\ref{EdgeGadgetWeight} that $\psi_K(O) = +\frac{1}{3}$ if $|V(O)\cap \{v_1,v_2\}|\in \{0,2\}$ and $\psi_K(O) = -\frac{1}{6}$ if $|V(O)\cap \{v_1,v_2\}|=1$. Thus, we separating by the possible subsequences for $O$, we have the following

\begin{align*} w_G(O) = &\frac{1}{2} \sum_{y\in R}\sum_{z\in N(y)\cap R}\cdot \Bigg( W(x_1,x_2,x_3,y) \left(+\frac{1}{3}\right) \\
&+ W(x_1,x_2,y,x_3) \left(+\frac{1}{3}\right) + W(x_1,y,x_2,x_3) \left(-\frac{1}{6}\right) + W(y,x_1,x_2,x_3) \left(-\frac{1}{6}\right)\\
&+ W(x_1,x_2,y,z) \left(+\frac{1}{3}\right) + W(x_1,y,x_2,z) \left(-\frac{1}{6}\right) + W(y,x_1,x_2,z) \left(-\frac{1}{6}\right)\\
&+ W(y,z,x_1,,x_2) \left(+\frac{1}{3}\right) + W(x_1,y,z,x_2) \left(-\frac{1}{6}\right) + W(y,x_1,z,x_2) \left(-\frac{1}{6}\right) \Bigg).
\end{align*}

\noindent Yet, by symmetry we have that $W(x_1,y,x_2,x_3) = W(y,x_1,x_2,x_3)$. Similarly, $W(x_1,y,x_2,z) = W(y,x_1,x_2,z)$ and $W(x_1,y,z,x_2) = W(y,x_1,z,x_2)$. Thus

\begin{align*} w_G(O) = &\frac{1}{6} \sum_{y\in R}\sum_{z\in N(y)\cap R} \Bigg( W(x_1,x_2,x_3,y) + W(x_1,x_2,y,x_3) - W(x_1,y,x_2,x_3)\\
&+ W(x_1,x_2,y,z) - W(x_1,y,x_2,z) + W(y,z,x_1,,x_2) - W(x_1,y,z,x_2) \Bigg).
\end{align*}

\noindent Note that the first three terms do not depend on $z$. Hence when summing over $z$, we may instead multiply by a factor of $|N(y)\cap R| = |\cK_5(G,\{y,x_1,x_2,x_3\})|$. Yet by definition, $W(x_1,x_2,x_3,y) \cdot |\cK_5(G,\{y,x_1,x_2,x_3\})| = W(x_1,x_2,x_3)$. Similarly, $W(x_1,x_2,y,x_3) \cdot |\cK_5(G,\{y,x_1,x_2,x_3\})| = W(x_1,x_2,y)$ and $W(x_1,y,x_2,x_3) \cdot |\cK_5(G,\{y,x_1,x_2,x_3\})| = W(x_1,y,x_2)$. Thus

\begin{align*} w_G(O) = &\frac{1}{6} \sum_{y\in R}\Bigg( W(x_1,x_2,x_3) + W(x_1,x_2,y) - W(x_1,y,x_2)\\
+&\sum_{z\in N(y)\cap R} \Bigg( W(x_1,x_2,y,z) - W(x_1,y,x_2,z) + W(y,z,x_1,,x_2) - W(x_1,y,z,x_2) \Bigg) \Bigg).
\end{align*}

Finally, we note that $W(x_1,x_2,x_3)$ does not depend on $y$. When summing over $y$, we may instead multiply by a factor of $|R| = |\cK_4(G,\{x_1,x_2,x_3\})|$. Yet, by definition $W(x_1,x_2,x_3) \cdot |\cK_4(G,\{x_1,x_2,x_3\})| = W(x_1,x_2)$. Thus the formula now follows as desired.
\end{proof}

It is more convenient during optimization to use the following function related to $w_G(O)$.

\begin{definition}
We define a function $w_{G,1}:\cOK_3(G) \rightarrow \bR$ as follows: for each $O=(x_1,x_2,x_3)\in \cOK_3(G)$, let
\begin{align*}
w_{G,1}(O) &:= 1-|\cK_3(G,\{x_1,x_2\})| \cdot 6 \cdot w_G(O)\\
&= |\cK_3(G,\{x_1,x_2\})| \cdot \sum_{y\in R} \Bigg( W(x_1,y,x_2) - W(x_1,x_2,y) \\
& + \sum_{z\in N(y)\cap R}\Bigg( W(x_1,y,x_2,z) - W(x_1,x_2,y,z) + W(x_1,y,z,x_2) - W(z,y,x_1,x_2)\Bigg) \Bigg).
\end{align*}
\end{definition}

Note that if $\delta(G)\ge \frac{3}{4}v(G)$, then $|\cK_3(G,\{x_1,x_2\})| > 0$ for every $x_1x_2\in E(G)$. Thus, in order to prove Theorem~\ref{RealMain2}, it suffices now to prove the following theorem.

\begin{theorem}\label{MainWThm}
Let $G$ be a graph with $\delta(G) \ge (1-d)v(G)$ where $d=\frac{7-\sqrt{21}}{14}$. If $O=(x_1,x_2,x_3)\in \cOK_3(G)$, then

$$w_{G,1}(O) \le 1.$$ 
\end{theorem}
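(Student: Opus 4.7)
The plan is to recast the bound $w_{G,1}(O) \le 1$ as a non-linear maximization program over the possible clique counts, and then to solve it. Every factor $W(v_1,\ldots,v_r)$ is a product of reciprocals of terms of the form $|\cK_{i+1}(G,\{v_1,\ldots,v_i\})|$, and in each of the three differences appearing in the definition of $w_{G,1}(O)$ the two summands share all but one of these reciprocals. For example,
\[
W(x_1,y,x_2) - W(x_1,x_2,y)
= \frac{1}{|\cK_4(G,\{x_1,x_2,y\})|}\!\left(\frac{1}{|\cK_3(G,\{x_1,y\})|}-\frac{1}{|\cK_3(G,\{x_1,x_2\})|}\right),
\]
with analogous identities for the two differences inside the inner sum. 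This exhibits the \emph{cancellation} structure: each signed term is a positive common prefactor times a true difference of two variables.

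Next I would normalize all clique counts by $n = v(G)$, introducing variables $\alpha_S := |\cK_{|S|+1}(G,S)|/n$ indexed by the relevant cliques $S \subseteq \{x_1,x_2,x_3,y,z\}$. The hypothesis $\delta(G)\ge(1-d)n$ translates into linear constraints $\alpha_S \ge 1-|S|d$, together with the monotonicity $\alpha_{S'}\le \alpha_S$ whenever $S\subseteq S'$ (since common neighborhoods shrink under inclusion). Interpreting the sums over $y\in R$ and $z\in N(y)\cap R$ as weighted volumes and then symmetrizing over the obvious symmetries of the problem (swapping $x_1\leftrightarrow x_2$, and in the double sum permuting $y\leftrightarrow z$; both fix every clique count and hence fix $w_{G,1}(O)$), the program collapses to one in ten variables whose objective is a sum of three ratios, each a positive rational prefactor multiplied by a single variable difference.

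With the program in this shape, I would perform the \emph{ramping} step: upper bound the objective by replacing each of the three signed terms by its positive part $\max(\cdot,0)$. Every summand then factors as a product of strictly positive quantities (in the denominators) and non-negative quantities (in the numerators), so monotonicity takes over: each variable that does not appear inside one of the three ramped differences can be driven to an extreme of its feasible range without decreasing any ramped summand. Executing these reductions in the order dictated by the nested structure of the three ratios first collapses the ten-variable program to six variables and then, one variable at a time, down to the two variables that remain inside the hardest (innermost) difference.

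The main obstacle will be this final two-variable optimization. At that stage each ramped term is guaranteed non-negative, so the ramps may be dropped and the objective becomes an ordinary rational function on a rectangle. I would set both partial derivatives to zero, verify that the unique interior critical point is a maximum (either by a second-order test or by comparison with the boundary values), and evaluate the objective there. The algebra should produce a quadratic in $d$ whose relevant root is precisely $d = \frac{7-\sqrt{21}}{14}$: this is the largest value of $d$ for which the maximum equals $1$. Combining this bound with the upper bounds from the previous three reductions yields $w_{G,1}(O) \le 1$, proving Theorem~\ref{MainWThm}.
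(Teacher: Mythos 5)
Your overall architecture---the cancellation identity, normalized common-neighbourhood densities as variables, collapse to a ten-variable program, ramping each signed term, successive monotone reductions, and a final low-dimensional optimization whose critical equation is $7d^2-7d+1=0$---is exactly the paper's. But two of your steps, as stated, would not go through. First, the constraint set you extract from $\delta(G)\ge(1-d)v(G)$ is too weak: the paper does not work only with $\alpha_S\ge 1-|S|d$ and monotonicity under inclusion, but crucially uses supermodularity of the densities, $\hat{N}(A\cup B)\ge \hat{N}(A)+\hat{N}(B)-\hat{N}(A\cap B)$, to impose the \emph{relative} constraints $e\ge x+y-1$, $q_0\ge e+e_0-x$, $q\ge e+f-y$ and $p\ge q_0+f-y$ (in the paper's notation). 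These are precisely the bounds substituted during the reductions, and they yield the optimum $3d(1-d)/(1-2d)^2$, which equals $1$ exactly at $d=\frac{7-\sqrt{21}}{14}$. With only your constraints one may take, say, $q_0$ as small as $1-3d$ and $p$ as small as $1-4d$, the objective then exceeds $1$ at this value of $d$, and the theorem would not follow from your program.

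Second, the collapse to ten variables cannot be justified by symmetries of the problem. Swapping $x_1\leftrightarrow x_2$ is not a symmetry of $w_{G,1}(O)$ (for instance $W(x_1,y,x_2)$ carries the factor $1/|\cK_3(G,\{x_1,y\})|$ while $W(x_2,y,x_1)$ carries $1/|\cK_3(G,\{x_2,y\})|$), and exchanging $y\leftrightarrow z$ does not fix the summands either; more importantly, the real obstacle is that the program has a separate block of variables for each $y\in R$ and each $z\in N(y)\cap R$, so its dimension grows with $v(G)$, and no finite symmetry removes that. The paper's reduction is an extremal-term (averaging) argument: since all blocks satisfy identical constraints, the maximum is achieved when every inner summand equals the largest one, first over the $z$-indices and then over the $y$-indices, leaving one representative block together with two explicit normalized counting variables $r_0\le e_0$ and $r\le q_0$; these are then pushed to their upper bounds in the first post-ramping reduction and are what produce the six-variable objective, so your ``weighted volumes'' must be made into honest variables with these constraints. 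Two smaller points: your rule of only moving variables that avoid the ramped differences stalls too early, since $e$, then $y$ and $x$, all occur inside the ramps and each reduction needs a short case analysis (trivial when the ramp vanishes, a ratio comparison otherwise); and the final two-variable maximum sits at the corner $e_0=x$, $f=y$ with $x=y=1-d$, not at an interior critical point, so the partial-derivative computation you envisage appears only as an auxiliary monotonicity estimate, with the endgame settled by the boundary comparison you mention parenthetically.
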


\section{Optimization}\label{Opt}

Clearly Theorem~\ref{MainWThm} is equivalent to some maximization program. Before stating the program, we first develop notation for the relevant variables and collect some necessary bounds in Subsection~\ref{Bounds}. In Subsection~\ref{Program}, we state our program and reformulate it in terms of variables instead of graphs. In Subsection~\ref{ReduceTo10}, we use a symmetrization argument to reduce from an arbitrary number of variables to just 10 variables.

In Section~\ref{Solving}, we solve the 10 variable program as follows.  In Subsection~\ref{ReduceTo6}, we then upper bound the program with a new program that uses ramps of functions (i.e.~the maximum of a function and 0). This is the key that allows us to slowly reduce the number of variables in the remainder of Section~\ref{Solving} until we solve the program.

\subsection{More Notation and Bounds}\label{Bounds}

\begin{definition}

Let $S\subseteq V(G)$. The \emph{common neighbor density} of $S$ is defined as

$$\hat{N}(S) := \frac{\left|V(G) \setminus \left(\bigcup_{s\in S} N(s)\right)\right|}{v(G)}.$$

Similarly if $H$ is a subgraph of $G$, we define the \emph{common neighbor density} of $H$, denoted $\hat{N}(H)$, as equal to $\hat{N}(V(H))$, the common neighbor density of $V(H)$.
\end{definition}

Note that for $S\subseteq V(G)$ such that $G[S]\in \cK_{|S|}(G)$, we have that

$$|\cK_{|S|+1}(G,S)| = v(G) \cdot \hat{N}(S).$$

Note that $\hat{N}(\emptyset)=1$ and that for each $v\in V(G)$, $\hat{N}(v)$ is the normalized degree of $v$ in $G$. We note the following bounds on $\hat{N}$, the first relates a set and its subset (i.e.~that $\hat{N}$ is monotone decreasing), the second relates sets with their intersection and union (i.e.~that $\hat{N}$ is supermodular).

\begin{proposition}\label{SubsetBound}
Let $G$ be a graph. If $S\subseteq S' \subseteq V(G)$, then

$$\hat{N}(S) \ge \hat{N}(S').$$
\end{proposition}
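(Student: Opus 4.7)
The plan is a one-line set-theoretic monotonicity argument applied directly to the definition of $\hat{N}$. Since the denominator $v(G)$ in $\hat{N}(S) = \bigl|V(G) \setminus \bigcup_{s \in S} N(s)\bigr|/v(G)$ does not depend on $S$, it suffices to exhibit a containment of the numerator sets and then take cardinalities.

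Concretely, I would argue as follows. Assume $S \subseteq S' \subseteq V(G)$. Then the family of neighborhoods indexed by $S$ is a subfamily of that indexed by $S'$, so
\[
\bigcup_{s \in S} N(s) \ \subseteq \ \bigcup_{s \in S'} N(s).
\]
Taking complements in $V(G)$ reverses this inclusion, yielding
\[
V(G) \setminus \bigcup_{s \in S'} N(s) \ \subseteq \ V(G) \setminus \bigcup_{s \in S} N(s).
\]
Since cardinality is monotone under inclusion, we conclude
\[
\Bigl|V(G) \setminus \bigcup_{s \in S'} N(s)\Bigr| \ \le \ \Bigl|V(G) \setminus \bigcup_{s \in S} N(s)\Bigr|,
\]
and dividing both sides by $v(G)$ gives $\hat{N}(S') \le \hat{N}(S)$, as required.

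There is essentially no obstacle: the statement is a direct consequence of the monotonicity of unions under inclusion of index sets combined with the monotonicity of cardinality. The only mild subtlety is remembering that one takes complements (which flips the direction of the inclusion), but this is transparent from the definition. The same reasoning would also handle the analogous supermodularity claim stated just after, with essentially no additional ideas beyond inclusion-exclusion on the numerator.
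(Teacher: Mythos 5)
Your proof is correct and is essentially the paper's own argument: the paper simply notes the one-line monotonicity $\bigcap_{s\in S} N(s) \supseteq \bigcap_{s\in S'} N(s)$, working with the common neighborhood $\bigcap_{s\in S}N(s)$ (which is how $\hat{N}$ is actually used throughout, e.g.\ in $|\cK_{|S|+1}(G,S)| = v(G)\cdot\hat{N}(S)$), whereas you took the displayed complement-of-a-union formula literally and ran the equivalent complementation argument. Either phrasing is the same trivial set-monotonicity observation, so there is nothing to fix.
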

\begin{proof}
This follows since 

$$\bigcap_{s\in S} N(s) \supseteq \bigcap_{s\in S'} N(s).$$
\qedhere
\end{proof}

\begin{proposition}\label{IntersectionBound}
Let $G$ be a graph.  If $A,B\subseteq V(G)$, then

$$\hat{N}(A\cup B) \ge \hat{N}(A) + \hat{N}(B) - \hat{N}(A\cap B).$$
\end{proposition}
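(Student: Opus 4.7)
The plan is to rewrite $\hat{N}$ as a complement of a set union and reduce the claimed supermodularity to the ordinary submodularity of the cardinality function on that union. Define $U(S) := \bigcup_{s \in S} N(s) \subseteq V(G)$, so that
$$\hat{N}(S) = 1 - \frac{|U(S)|}{v(G)}.$$
Under this substitution, the inequality in the proposition is algebraically equivalent to
$$|U(A)| + |U(B)| \ge |U(A \cup B)| + |U(A \cap B)|,$$
i.e.\ the statement that $S \mapsto |U(S)|$ is submodular.

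To establish this, I will first observe the set identity $U(A \cup B) = U(A) \cup U(B)$, which is immediate from interchanging the two unions in the definition of $U$. Applying ordinary inclusion--exclusion then yields
$$|U(A \cup B)| = |U(A)| + |U(B)| - |U(A) \cap U(B)|,$$
so the task reduces to proving the single inequality $|U(A \cap B)| \le |U(A) \cap U(B)|$.

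For that inequality I will verify the stronger set inclusion $U(A \cap B) \subseteq U(A) \cap U(B)$: if $v \in U(A \cap B)$, then some $s \in A \cap B$ is a neighbor of $v$, and that same witness $s$ simultaneously places $v$ in $U(A)$ and in $U(B)$. Taking cardinalities and unwinding the substitution back into $\hat{N}$ then completes the argument.

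There is no real obstacle here; the only conceptual point worth highlighting is that the containment $U(A \cap B) \subseteq U(A) \cap U(B)$ can be strict — a vertex may be a neighbor of something in $A$ and of something in $B$ without being adjacent to any vertex of $A \cap B$ — and this slack is precisely what makes $\hat{N}$ genuinely supermodular rather than modular on subsets of $V(G)$.
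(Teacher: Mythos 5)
Your proof is correct and is essentially the paper's own argument in complementary form: the paper applies inclusion--exclusion to $X=V(G)\setminus U(A)$ and $Y=V(G)\setminus U(B)$ (whose sizes are $v(G)\cdot\hat{N}(A)$ and $v(G)\cdot\hat{N}(B)$ under the definition as printed) together with the containment $X\cup Y\subseteq V(G)\setminus U(A\cap B)$, which is exactly the De Morgan dual of your containment $U(A\cap B)\subseteq U(A)\cap U(B)$. So both arguments rest on the same inclusion--exclusion identity and the same one-line containment coming from $A\cap B\subseteq A,B$, yours phrased as submodularity of $|U(\cdot)|$ and the paper's phrased directly as supermodularity of $\hat{N}$.
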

\begin{proof}
Let $X = V(G)\setminus \bigcup_{a\in A} N(a)$ and $Y = V(G)\setminus \bigcup_{b\in B} N(b)$. Now, 

$$|X\cap Y| = |X|+|Y|-|X\cup Y|.$$

\noindent Yet $|X| = v(G)\cdot\hat{N}(A)$ and $|Y|= v(G)\cdot\hat{N}(B)$. Moreover, 

$$|X\cap Y| = | V(G)\setminus \bigcup_{s\in A\cup B} N(s) | = v(G)\cdot\hat{N}(A\cup B).$$
 Because $X\cup Y \subseteq V(G)\setminus \bigcup_{s\in A\cap B}N(s)$, we see that
$$|X\cup Y| \leq |V(G)\setminus \bigcup_{s\in A\cap B}N(s)| =  v(G)\cdot\hat{N}(A\cap B),$$
and the proposition follows.
\end{proof}

We will also need the following lower bounds for $\hat{N}$ to ensure that certain factors in the objective function of the program are non-negative or even strictly positive.

\begin{proposition}\label{PositiveBound}
Let $G$ be a graph with $\delta(G) > \frac{3}{4}v(G)$. If $S\subseteq V(G)$ such that $|S|\le 4$, then

$$\hat{N}(S) > 1- \frac{|S|}{4} \ge 0.$$
\end{proposition}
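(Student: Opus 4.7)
The plan is to lower-bound $\hat{N}(S)$ by a simple union bound applied to the non-neighborhoods of the elements of $S$. For each $s \in S$, write $\bar N(s) := V(G)\setminus N(s)$; the strict minimum degree hypothesis gives $|\bar N(s)| = v(G) - d(s) \le v(G) - \delta(G) < \tfrac{1}{4}v(G)$. Since the common neighborhood is $\bigcap_{s\in S} N(s) = V(G) \setminus \bigcup_{s\in S}\bar N(s)$, a union bound yields
$$v(G)\cdot \hat{N}(S) \;=\; \left|\bigcap_{s\in S} N(s)\right| \;\ge\; v(G) - \sum_{s\in S}|\bar N(s)| \;>\; v(G)\left(1 - \tfrac{|S|}{4}\right),$$
and dividing by $v(G)$ gives the desired $\hat{N}(S) > 1 - |S|/4$. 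The hypothesis $|S| \le 4$ is then used solely to conclude that this lower bound is non-negative.

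An alternative route, avoiding a direct return to the underlying set, is by induction on $|S|$ using Proposition~\ref{IntersectionBound}. Taking $A = S\setminus\{s_0\}$ and $B = \{s_0\}$ for any $s_0 \in S$, so that $A \cap B = \emptyset$ and $\hat{N}(\emptyset) = 1$, one obtains $\hat{N}(S) \ge \hat{N}(S\setminus\{s_0\}) + \hat{N}(\{s_0\}) - 1$; chaining this with the base case $\hat{N}(\{s\}) > 3/4$ (immediate from the degree hypothesis) gives the same conclusion inductively.

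I do not expect any real obstacle here: the statement is essentially a single application of the union bound, and the strict inequality in the conclusion propagates directly from the strict minimum degree hypothesis $\delta(G) > \tfrac{3}{4}v(G)$. The only point worth flagging is that the cutoff $|S|\le 4$ plays no role in deriving the inequality $\hat{N}(S) > 1 - |S|/4$ itself; it is invoked only to ensure that the right-hand side is non-negative (and hence a useful bound), after which the inequality can be fed into downstream computations that require certain factors involving $\hat{N}$ of small sets to be strictly positive.
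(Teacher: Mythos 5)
Your proposal is correct, and your second route (inducting via Proposition~\ref{IntersectionBound} from $\hat{N}(\emptyset)=1$) is exactly the paper's proof, which applies that proposition repeatedly to get $\hat{N}(S)\ge 1-|S|d_0$ with $d_0=1-\delta(G)/v(G)<\tfrac14$. Your primary union-bound argument is just the unrolled, elementary version of the same estimate, so there is nothing substantively different to flag.
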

\begin{proof}
Let $d_0 = 1 - \frac{\delta(G)}{v(G)}$. It follows from repeated applications of Proposition~\ref{IntersectionBound} that $\hat{N}(S) \ge 1 - |S|d_0$. Since $d_0 < 1/4$, we have that $\hat{N}(S) > 1-\frac{|S|}{4}$, which is at least $0$ since $|S|\le 4$.
\end{proof}

\begin{proposition}\label{PositiveBound2}
Let $G$ be a graph with $\delta(G) > \frac{3}{4}v(G)$. If $A,B\subseteq V(G)$ such that $|A\cup B|\le 4$, then

$$\hat{N}(A)+\hat{N}(B)-\hat{N}(A\cap B) > 0.$$
\end{proposition}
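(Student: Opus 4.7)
The plan is to reduce this to a lower-bound question about the individual densities, because the direct application of Proposition~\ref{IntersectionBound} to $A$ and $B$ gives $\hat{N}(A)+\hat{N}(B)-\hat{N}(A\cap B) \le \hat{N}(A\cup B)$, which is the \emph{wrong} direction for our purposes. So the main obstacle is coming up with a different application of supermodularity that produces a lower bound on the quantity of interest.

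The key trick I would use is to apply Proposition~\ref{IntersectionBound} not to $A$ and $B$, but to the pair $A\cap B$ and $A\setminus B$, which are disjoint and whose union is $A$. Since $(A\cap B)\cap(A\setminus B)=\emptyset$ and $\hat{N}(\emptyset)=1$, this yields
\[
\hat{N}(A) \;\ge\; \hat{N}(A\cap B)+\hat{N}(A\setminus B)-1,
\]
which rearranges to
\[
\hat{N}(A)+\hat{N}(B)-\hat{N}(A\cap B) \;\ge\; \hat{N}(A\setminus B)+\hat{N}(B)-1.
\]
Thus it suffices to show $\hat{N}(A\setminus B)+\hat{N}(B) > 1$.

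For the remaining step I would invoke the intermediate bound $\hat{N}(S)\ge 1-|S|d_0$ proved inside Proposition~\ref{PositiveBound}, where $d_0=1-\delta(G)/v(G)<\tfrac{1}{4}$. Applying this to $S=A\setminus B$ and $S=B$ gives
\[
\hat{N}(A\setminus B)+\hat{N}(B) \;\ge\; 2-(|A\setminus B|+|B|)\,d_0 \;=\; 2-|A\cup B|\,d_0 \;\ge\; 2-4d_0 \;>\; 1,
\]
where the final strict inequality uses $d_0<\tfrac{1}{4}$ and the hypothesis $|A\cup B|\le 4$. Chaining the inequalities gives $\hat{N}(A)+\hat{N}(B)-\hat{N}(A\cap B)>0$, as required.

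The whole argument is short and the only nontrivial idea is the choice of decomposition $A=(A\cap B)\sqcup(A\setminus B)$ before applying Proposition~\ref{IntersectionBound}; once that move is made, everything else is routine and the strictness of the final inequality is controlled exactly by the strict bound $d_0<1/4$ coming from $\delta(G)>\tfrac{3}{4}v(G)$. A minor sanity check is worthwhile for the degenerate cases ($A\subseteq B$, so $A\setminus B=\emptyset$ and $\hat{N}(A\setminus B)=1$; or $A\cap B=\emptyset$, so $\hat{N}(A\cap B)=1$), both of which fit into the same chain of inequalities without special treatment.
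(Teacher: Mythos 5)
Your proof is correct and is essentially the paper's own argument: the paper applies Proposition~\ref{IntersectionBound} to the disjoint pair $A\cap B$ and $B\setminus A$ (you do the symmetric decomposition of $A$, which is the same proof since the quantity is symmetric in $A$ and $B$) and then invokes the $1-|S|d_0$-type lower bound of Proposition~\ref{PositiveBound} exactly as you do. The only cosmetic difference is that you cite the intermediate bound $\hat{N}(S)\ge 1-|S|d_0$ from inside that proposition's proof rather than its stated $1-|S|/4$ form, which changes nothing.
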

\begin{proof}
By Proposition~\ref{PositiveBound}, $\hat{N}(A) > 1 - \frac{|A|}{4}.$ By Proposition~\ref{IntersectionBound}, we have that 

$$\hat{N}(B) \ge \hat{N}(A\cap B) + \hat{N}(B\setminus A) - \hat{N}(\emptyset).$$

\noindent Recall that $\hat{N}(\emptyset)=1$. Furthermore, by Proposition~\ref{PositiveBound}, we have that  $\hat{N}(B\setminus A) \ge 1 - \frac{|B\setminus A|}{4}$. Hence

$$\hat{N}(B) - \hat{N}(A\cap B) \ge \hat{N}(B\setminus A) - 1 \ge -\frac{|B\setminus A|}{4}.$$

\noindent Combining this with our inequality for $\hat{N}(A)$, we find that

$$\hat{N}(A)+\hat{N}(B)-\hat{N}(A\cap B) > 1 - \frac{|A\cup B|}{4},$$

\noindent which is at least $0$ since $|A\cup B|\le 4$.
\end{proof}

\subsection{Main Program}\label{Program}

We now define a scaled version of $W$ as follows:

\begin{definition}
Let $G$ be a graph and let $r\in \{2,3,4\}$. For every $K=(v_1,\ldots, v_r)\in \cOK_r(G)$, we define a scaled weight 

\begin{align*}
\hat{W}(K) &= v(G)^{r-1} \cdot W(K) = \prod_{i=2}^r \frac{v(G)}{|\cK_{r+1}(G, \{v_1, \ldots, v_r\})|}= \prod_{i=2}^r \frac{1}{\hat{N}(\{v_1, \ldots, v_r\})}.
\end{align*}

\noindent For ease of reading, we will let $\hat{W}(v_1,\ldots, v_r) := \hat{W}(K)$.
\end{definition}

We may now rewrite $w_{G,1}(O)$ in terms of these scaled weights as follows. For ease of reading, we drop the set signs when taking $\hat{N}$ of a set of vertices.

\begin{proposition}\label{w1}
If $O=(x_1,x_2,x_3) \in \cOK_3(G)$ and $R=\bigcap_{i=1}^3 N(x_i)$, then

\begin{align*}
\hat{w}_{G,1}(O):= &\hat{N}(x_1,x_2) \cdot \frac{1}{v(G)} \cdot \sum_{y\in R} \Bigg( \hat{W}(x_1,y,x_2) - \hat{W}(x_1,x_2,y) \\
& + \frac{1}{v(G)} \cdot \sum_{z\in N(y)\cap R}\Bigg( \hat{W}(x_1,y,x_2,z) - \hat{W}(x_1,x_2,y,z) + \hat{W}(x_1,y,z,x_2) - \hat{W}(z,y,x_1,x_2)\Bigg) \Bigg).
\end{align*}
\end{proposition}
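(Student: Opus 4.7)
The claim amounts to rewriting the definition of $w_{G,1}(O)$ appearing just before Theorem~\ref{MainWThm} in terms of the scaled weights $\hat{W}$ rather than $W$, so the plan is an almost mechanical substitution. Starting from
$$w_{G,1}(O) = |\cK_3(G,\{x_1,x_2\})| \sum_{y\in R} \Big( W(x_1,y,x_2) - W(x_1,x_2,y) + \sum_{z\in N(y)\cap R} \big(\cdots\big) \Big),$$
I would apply two identities. The first, from the observation immediately after the definition of common neighbor density, is $|\cK_{|S|+1}(G,S)| = v(G)\cdot\hat{N}(S)$; in particular $|\cK_3(G,\{x_1,x_2\})| = v(G)\cdot\hat{N}(x_1,x_2)$. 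The second is the definitional rescaling $W(K) = \hat{W}(K)/v(G)^{r-1}$ valid for every $K \in \cOK_r(G)$, so that each 3-clique weight $W(\cdot,\cdot,\cdot)$ becomes $\hat{W}(\cdot,\cdot,\cdot)/v(G)^2$ and each 4-clique weight $W(\cdot,\cdot,\cdot,\cdot)$ becomes $\hat{W}(\cdot,\cdot,\cdot,\cdot)/v(G)^3$.

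Next I would track the powers of $v(G)$. Combining the $v(G)^{+1}$ coming from $|\cK_3(G,\{x_1,x_2\})|$ with the $v(G)^{-2}$ attached to each three-clique term in the outer sum produces a common factor of $1/v(G)$, which pairs with $\hat{N}(x_1,x_2)$ to reproduce the leading $\hat{N}(x_1,x_2)\cdot\tfrac{1}{v(G)}$ of the stated formula. The four-clique terms in the inner sum carry an extra $v(G)^{-1}$ relative to the three-clique terms, and this is exactly the $\tfrac{1}{v(G)}$ factor sitting in front of the sum over $z \in N(y)\cap R$ in the statement. Matching summands term-by-term yields the displayed identity $\hat{w}_{G,1}(O) = w_{G,1}(O)$.

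There is essentially no obstacle beyond bookkeeping. The only small point worth flagging is that the scaled weights $\hat{W}(K)$ (equivalently $W(K)$) need to be well-defined, which requires $\hat{N}(S) > 0$ for each of the sets $S$ of size at most four appearing in the formula; under the standing minimum-degree hypothesis $\delta(G) > \tfrac{3}{4} v(G)$, this is furnished by Proposition~\ref{PositiveBound}. Unlike the surrounding material, the proposition is a purely notational reformulation and so its proof should fit in a few lines.
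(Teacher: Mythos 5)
Your proposal is correct and matches the paper's treatment: the paper states Proposition~\ref{w1} as an immediate reformulation (with no separate proof), and your substitution of $|\cK_3(G,\{x_1,x_2\})| = v(G)\cdot\hat{N}(x_1,x_2)$ together with $W(K)=\hat{W}(K)/v(G)^{r-1}$, plus the power-of-$v(G)$ bookkeeping, is exactly the intended argument. The remark on well-definedness via Proposition~\ref{PositiveBound} is a fine, harmless addition.
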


\noindent To prove Theorem~\ref{MainWThm}, it suffices to prove that the following program has value at most $1$.


\begin{table}[h]
  \begin{tabular}{llr@{}l}
      \multicolumn{1}{r}{{\rm (P1)}:} & \multicolumn{3}{l}{maximize $\hat{w}_{G,1}(O)$}  \\[0.2cm]
			\multicolumn{1}{r}{\text{s.t.}} & \multicolumn{3}{l}{$\text{ for all } y_i \in \bigcap_{k=1}^3N(x_k) \text{ and } z_{i,j}  \in N(y_i)\cap \bigcap_{k=1}^3N(x_k)$}\\[0.2cm]
     &\textbf{I. Degree constraints: } &$\hat{N}(x_1)$ & $\ \in [1-d,1], $ \\
 &  &$\hat{N}(y_i)$ &$\ \in [1-d,1], $ \\[0.2cm]
		&\textbf{II. Triangle constraints: }  &$\hat{N}(x_1,x_2)$&$\ \in [\hat{N}(x_1)-d,\hat{N}(x_1)]$,  \\
&  &$\hat{N}(x_1,y_i)$&$\ \in [\hat{N}(x_1)+\hat{N}(y_i)-1,1]$, \\
&  &$\hat{N}(y_i,z_{i,j})$ & $\ \in [\hat{N}(y_i)-d,\hat{N}(y_i)]$, \\[0.2cm]
&\textbf{III. $K_4$ constraints: }  &$\hat{N}(x_1,x_2,y_i)$ & $\ \in [\hat{N}(x_1,y_i)+\hat{N}(x_1,x_2)-\hat{N}(x_1),1]$, \\
&  &$\hat{N}(x_1,y_i,z_{i,j})$ &$\ \in [\hat{N}(x_1,y_i)+\hat{N}(y_i, z_{i,j})-\hat{N}(y_i),1]$, \\[0.2cm]
&\textbf{IV. $K_5$ constraints: }  &$\hat{N}(x_1,x_2,y_i,z_{i,j}) $& $\ \in [\hat{N}(x_1,x_2,y_i)+\hat{N}(y_i,z_{i,j})-\hat{N}(y_i),1]$. \\
  \end{tabular}
\end{table}

We note that the Degree Constraints follow from the bounds on the minimum degree. The Triangle Constraints for $\hat{N}(x_1,x_2)$ and $\hat{N}(y_i,z_{i,j})$ follow from Proposition~\ref{SubsetBound} while for $\hat{N}(x_1,y_i)$ they follow from Proposition~\ref{IntersectionBound}. Similarly, the $K_4$ Constraints and the $K_5$ Constraints follow from Proposition~\ref{IntersectionBound}. 

We also note that all of the variables are strictly positive by Proposition~\ref{PositiveBound} since $d < 0.25$. Moreover, each variable is at most $1$ since $\hat{N}(S) \le 1$ for every $S\subseteq V(G)$. Hence $\hat{w}_{G,1}(O)$ is well-defined and continuous in the domain of {\rm(P1)}.

Notice that we now think of these as variables. To make this more explicit,  let $R_0 = |\cK_4(G,O)|$ and for each $y_i\in \bigcap_{k=1}^3 N(x_k)$, we let $R_i = |\cK_5(G,V(O)\cup \{y_i\})|$. Let us replace the neighborhood densities above with variable names as follows:
\begin{itemize}
\item $\hat{N}(x_1) \rightarrow x$,
\item $\hat{N}(y_i) \rightarrow y_i$,
\item $\hat{N}(x_1,x_2) \rightarrow e_0$,
\item $\hat{N}(x_1,y_i) \rightarrow e_i$ for $i\in [R_0]$, 
\item $\hat{N}(y_i,z_{i,j}) \rightarrow f_{i,j}$ for $i\in [R_0], j\in [R_i]$,
\item $\hat{N}(x_1,x_2,y_i) \rightarrow q_{i,0}$ for $i\in [R_0]$,
\item $\hat{N}(x_1,y_i,z_{i,j}) \rightarrow q_{i,j}$ for $i\in [R_0], j\in [R_i]$,
\item $\hat{N}(x_1,x_2,y_i,z_{i,j}) \rightarrow p_{i,j}$ for $i\in [R_0], j\in [R_i]$.
\end{itemize}

Changing variable names as above in the formula from Proposition~\ref{w1}, we immediately see that $w_{G,1}(0)$ becomes:

$$\hat{W}_1 = \frac{e_0}{v(G)} \sum_{i=1}^{R_0} \Bigg( \frac{1}{q_{i,0}} \left(\frac{1}{e_i} - \frac{1}{e_0}\right) + \frac{1}{v(G)}\sum_{j=1}^{R_i} \Bigg( \frac{1}{p_{i,j}} \left( \frac{1}{q_{i,j}}\left(\frac{1}{e_i} - \frac{1}{f_{i,j}}\right) + \frac{1}{q_{i,0}}\left(\frac{1}{e_i} - \frac{1}{e_0}\right) \right) \Bigg) \Bigg).$$

The program then is as follows:


\begin{table}[h]
  \begin{tabular}{llr@{}l}
      \multicolumn{1}{r}{{\rm (P1)}:} & \multicolumn{3}{l}{maximize $\hat{W}_1$ } \\[0.2cm]
			\multicolumn{1}{r}{\text{s.t.}} & \multicolumn{3}{l}{$\text{ for all } i \in [R_0] \text{ and } j \in [R_i]$}\\[0.2cm]
     &\textbf{I. Degree constraints: } &$x$ & $\ \in [1-d,1], $ \\
&  &$y_i$ & $\ \in [1-d,1], $\\[0.2cm]
&\textbf{II. Triangle constraints: }  & $e_0$ & $\ \in [x-d,x], $  \\
&  &$e_i$ & $\ \in [x+y_i-1,1]$, \\
&  &$f_{i,j}$ & $\ \in [y_i-d,y_i]$, \\[0.2cm]
&\textbf{III. $K_4$ constraints: }  &$q_{i,0}$ & $\ \in [e_i+e_0-x,1]$, \\
&  &$q_{i,j}$ & $\ \in [e_i+f_{i,j}-y_i,1]$,  \\[0.2cm]
&\textbf{IV. $K_5$ constraints: }  &$p_{i,j}$ & $\ \in [q_{i,0}+f_{i,j}-y_i,1]$, \\[0.2cm]
&\textbf{V. Number of terms constraints: }  & $R_0$ & $\ \in [0,e_0 \cdot v(G)]$, \\
& &$R_i$ & $\ \in [0, q_{i,0} \cdot v(G)]$. \\
  \end{tabular}
\end{table}

Note that the bounds on $R_0$ and $R_i$ are derived as follows. First, $R_0 = v(G)\cdot \hat{N}(T)$. Yet $\hat{N}(T) \ge 0$ by Proposition~\ref{PositiveBound} and $\hat{N}(T) \le \hat{N}(e_0)$ (which has been renamed to just $e_0$) by Proposition~\ref{SubsetBound}. Similarly, $R_i = v(G) \cdot \hat{N}(T\cup \{y_i\})$. Yet $\hat{N}(T\cup \{y_i\}) \ge 0$ by Proposition~\ref{PositiveBound} and $\hat{N}(T\cup \{y_i\}) \le \hat{N}(e_0\cup e_i)$ (which has been renamed to $q_{i,0}$) by Proposition~\ref{SubsetBound}.

\subsection{Reduction to 10 Variables}\label{ReduceTo10}

Throughout this paper we will say that the maximum of an optimization program is \emph{achieved} to mean that there exists a point satisfying certain conditions so that the value of the objective function at that point is equal to the optimum value of the program; we note that the maximum may not necessarily be unique and may be achieved by points not satisfying these conditions.
\begin{lemma}\label{Lem1To2}
The maximum value of {\rm(P1)} is achieved when for all $i\in [R]$ and $j,j'\in [R_i]$, we have 

$$f_{i,j}=f_{i,j'}, q_{i,j}=q_{i,j'}, p_{i,j}=p_{i,j'}.$$ 
\end{lemma}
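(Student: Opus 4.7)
The plan is a straightforward symmetrization argument performed separately for each index $i \in [R_0]$. The key observation driving the argument is that the objective $\hat W_1$ decomposes as a sum over $i$ in which the variables $\{(f_{i,j}, q_{i,j}, p_{i,j})\}_{j\in[R_i]}$ appear only inside the inner sum over $j$, and each individual summand there depends on exactly one of these triples; the remaining quantities $e_0, e_i, q_{i,0}, y_i$ that appear are constant in $j$. So for fixed $i$, the inner sum is a sum of $R_i$ independent single-triple contributions, and replacing all of them by whichever triple maximizes its own contribution can only increase the sum.

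First, I would fix a maximizer of {\rm(P1)} and, for each $i \in [R_0]$, write the inner sum as $S_i = \sum_{j=1}^{R_i} T_{i,j}$ where
$$T_{i,j} := \frac{1}{p_{i,j}} \left( \frac{1}{q_{i,j}} \left(\frac{1}{e_i} - \frac{1}{f_{i,j}} \right) + \frac{1}{q_{i,0}} \left( \frac{1}{e_i} - \frac{1}{e_0} \right) \right).$$
I would then select $j_i^{\star} \in [R_i]$ attaining $T_{i,j_i^{\star}} = \max_{j \in [R_i]} T_{i,j}$ and, for every $j \in [R_i]$, overwrite the triple $(f_{i,j}, q_{i,j}, p_{i,j})$ by $(f_{i,j_i^{\star}}, q_{i,j_i^{\star}}, p_{i,j_i^{\star}})$. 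The new value of the inner sum is $R_i \cdot T_{i,j_i^{\star}} \ge S_i$, so the overall objective can only weakly increase. Performing this overwrite simultaneously for every $i \in [R_0]$ yields a point at which the desired equalities $f_{i,j} = f_{i,j'}$, $q_{i,j} = q_{i,j'}$, $p_{i,j} = p_{i,j'}$ hold for all $i, j, j'$, whose objective value is at least that of the initial maximum, and which is therefore itself a maximizer.

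Second, I would verify that the modified point remains feasible for {\rm(P1)}. The only constraints in which the altered variables occur are the Triangle constraint $f_{i,j} \in [y_i - d, y_i]$, the $K_4$ constraint $q_{i,j} \in [e_i + f_{i,j} - y_i, 1]$, and the $K_5$ constraint $p_{i,j} \in [q_{i,0} + f_{i,j} - y_i, 1]$. Every quantity appearing in these constraints is either a bound like $y_i, e_i, q_{i,0}, d, 1$ that is indexed by $i$ alone (or not at all), or one of $f_{i,j}, q_{i,j}, p_{i,j}$ itself; no constraint couples different values of $j$ for the same $i$, and no constraint outside the inner block references the altered variables. Consequently, because the original triple indexed by $(i, j_i^{\star})$ satisfied these three constraints by assumption, the uniform replacement triple satisfies them for every $j \in [R_i]$ as well.

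The only subtle point — and the one I expect to be the sole obstacle — is the bookkeeping check in the previous paragraph: it is essential that no cross-$j$ coupling has been overlooked in {\rm(P1)}, in particular that the $K_5$ constraint involves $q_{i,0}$ rather than some $q_{i,j'}$ with $j' \ne j$. Once one confirms, by inspection of the constraint list, that the $(f_{i,j}, q_{i,j}, p_{i,j})$-block is genuinely local in $j$ (given fixed $e_0, e_i, q_{i,0}, y_i$), the lemma is immediate from the "replace by the maximizer" step above, and no optimization beyond a single maximum selection is required.
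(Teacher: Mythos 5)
Your argument is essentially the paper's own proof: select, for each $i$, the index $j_i$ maximizing the inner term, overwrite all triples $(f_{i,j},q_{i,j},p_{i,j})$ by the maximizing one, and observe that feasibility is preserved because the constraints on these variables are identical for each $j\in[R_i]$ and involve no cross-$j$ coupling. The only detail the paper states that you leave implicit is why a maximizer exists at all (the domain of {\rm(P1)} is closed and bounded and $\hat{W}_1$ is continuous there), which is worth one sentence but does not change the argument.
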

\begin{proof}
Since the domain of {\rm(P1)} is closed and bounded and $\hat{W_1}$ is well-defined and continuous on the domain of {\rm(P1)}, we find that {\rm(P1)} has a global maximum. Let $P_0$ be a point that achieves this maximum. For each $i$, let $j_i \in [R_i]$ such that 
$$\frac{1}{p_{i,j_i}} \left( \frac{1}{q_{i,j_i}}\left(\frac{1}{e_i} - \frac{1}{f_{i,j_i}}\right) + \frac{1}{q_{i,0}}\left(\frac{1}{e_i} - \frac{1}{e_0}\right) \right)$$
is maximized over all $j\in [R_i]$. Then the point $P_0'$ obtained from $P_0$ by setting $f_{i,j}=f_{i,j_i}, q_{i,j}=q_{i,j_i},$ and $p_{i,j}=p_{i,j_i}$ for all $i\in [R]$ and $j\in [R_i]$ is also a point that achieves this maximum. Moreover, since the constraints for the $f_{i,j},q_{i,j}, p_{i,j}$ are identical for each $j\in [R_i]$, it follows that $P_0'$ also satisfies the constraints of {\rm(P1)} as desired.
\end{proof}

Letting $r_i = \frac{R_i}{v(G)}$, we form a new program {\rm (P2)} with a new objective function that has the same optimum value as {\rm(P1)}:

$$\hat{W}_2 = \frac{e_0}{v(G)} \sum_{i=1}^{R_0} \Bigg( \frac{1}{q_{i,0}} \left(\frac{1}{e_i} - \frac{1}{e_0}\right) + r_i \Bigg( \frac{1}{p_i} \left( \frac{1}{q_i}\left(\frac{1}{e_i} - \frac{1}{f_i}\right) + \frac{1}{q_{i,0}}\left(\frac{1}{e_i} - \frac{1}{e_0}\right) \right) \Bigg) \Bigg).$$
%

\begin{table}[!h]
  \begin{tabular}{llr@{}l}
      \multicolumn{1}{r}{{\rm (P2)}:} & \multicolumn{3}{l}{maximize $\hat{W}_2$}  \\[0.2cm]
			\multicolumn{1}{r}{\text{s.t.}} & \multicolumn{3}{l}{$\text{ for all } i \in [R_0]$}\\[0.2cm]
     &\textbf{I. Degree constraints: } &$x$ & $\ \in [1-d,1], $ \\
		&  &$y_i$ & $\ \in [1-d,1], $ \\[0.2cm]
		&\textbf{II. Triangle constraints: }  &$e_0$& $\ \in [x-d,x]$,  \\
		&  &$e_i$ &$\ \in [x+y_i-1,1]$, \\
&  &$f_i$&$\ \in [y_i-d,y_i]$,  \\[0.2cm]
&\textbf{III. $K_4$ constraints: }  &$q_{i,0}$&$\ \in [e_i+e_0-x,1]$.\\
&  &$q_i$&$\ \in [e_i+f_i-y_i,1]$, \\[0.2cm]
&\textbf{IV. $K_5$ constraints: }  &$p_i$ & $\ \in [q_{i,0}+f_i-y_i,1]$,\\[0.2cm]
&\textbf{V. Number of terms constraints: }  &$R_0$&$\ \in [0,e_0\cdot v(G)]$,\\
& &$r_i$&$\ \in [0,q_{i,0}]$.\\
  \end{tabular}
\end{table}

\begin{corollary}\label{Cor1To2}
${\rm OPT}{\rm (P1)} = {\rm OPT}{\rm (P2)}$.
\end{corollary}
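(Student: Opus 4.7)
The plan is to verify both inequalities $\mathrm{OPT}(\mathrm{P1}) \le \mathrm{OPT}(\mathrm{P2})$ and $\mathrm{OPT}(\mathrm{P1}) \ge \mathrm{OPT}(\mathrm{P2})$, each of which is essentially a bookkeeping/change-of-variables argument built on top of Lemma~\ref{Lem1To2}.

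For $\mathrm{OPT}(\mathrm{P1}) \le \mathrm{OPT}(\mathrm{P2})$, I would invoke Lemma~\ref{Lem1To2} to pass to an optimal point of (P1) at which $f_{i,j} = f_i$, $q_{i,j} = q_i$, $p_{i,j} = p_i$ for every $i \in [R_0]$ and $j \in [R_i]$. At such a point, the inner sum collapses into
$$\sum_{j=1}^{R_i}\frac{1}{p_i}\!\left(\frac{1}{q_i}\!\left(\frac{1}{e_i}-\frac{1}{f_i}\right)+\frac{1}{q_{i,0}}\!\left(\frac{1}{e_i}-\frac{1}{e_0}\right)\right) = R_i\cdot\frac{1}{p_i}\!\left(\frac{1}{q_i}\!\left(\frac{1}{e_i}-\frac{1}{f_i}\right)+\frac{1}{q_{i,0}}\!\left(\frac{1}{e_i}-\frac{1}{e_0}\right)\right),$$
so that setting $r_i := R_i/v(G)$ transforms $\hat{W}_1$ into $\hat{W}_2$ term-by-term. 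I would then read off that every constraint of (P2) is satisfied by this symmetric point: the Degree, Triangle (for $e_0$, $e_i$, $f_i$), $K_4$, $K_5$, and $R_0$ constraints are literally identical in the two programs, while $R_i \in [0, q_{i,0}\cdot v(G)]$ rescales to $r_i \in [0, q_{i,0}]$. This produces a feasible (P2) point whose objective value equals the (P1) optimum.

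For the reverse inequality $\mathrm{OPT}(\mathrm{P1}) \ge \mathrm{OPT}(\mathrm{P2})$, I would go in the opposite direction: take any feasible (P2) point and construct a matching (P1) point by putting $R_i := r_i\cdot v(G)$ together with $f_{i,j} := f_i$, $q_{i,j} := q_i$, $p_{i,j} := p_i$ for each $j \in [R_i]$, while keeping all other variables the same. Because the $R_i$ summands of the inner sum of $\hat{W}_1$ are now identical copies of the common term, the inner sum contributes exactly $R_i = r_i \cdot v(G)$ times this common term, so $\hat{W}_1$ at the constructed point equals $\hat{W}_2$ at the original one. Every (P1) constraint now follows from the corresponding (P2) constraint simply by undoing the rescaling $r_i = R_i/v(G)$.

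Combining the two inequalities yields the desired equality. There is no substantive obstacle left: Lemma~\ref{Lem1To2} has already done the only structurally nontrivial work (showing optimizers of (P1) are symmetric in the $j$ index), and all that remains is to check that the change of variables $r_i \leftrightarrow R_i/v(G)$ preserves both the objective function and the constraint intervals, which it does by inspection.
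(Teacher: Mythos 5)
Your argument is correct and is essentially the paper's: the paper's entire proof of this corollary is ``Follows from Lemma~\ref{Lem1To2}'', and your first direction spells out exactly that reasoning (symmetrize an optimal {\rm (P1)} point via Lemma~\ref{Lem1To2}, collapse the inner sum into $R_i$ identical terms, and rescale $r_i = R_i/v(G)$ to obtain a feasible {\rm (P2)} point of the same value). The one wrinkle, which the paper itself also elides, is in your reverse direction: for a feasible {\rm (P2)} point, $R_i := r_i\cdot v(G)$ need not be a non-negative integer, so the ``{\rm (P1)} point'' whose objective is a sum of $R_i$ identical terms is not literally well-defined; strictly speaking your construction cleanly yields only ${\rm OPT}{\rm (P1)} \le {\rm OPT}{\rm (P2)}$, with the equality being exactly as informal as the paper's own claim. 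This is immaterial for the paper's argument, since everything downstream uses only the chain of inequalities of the form ${\rm OPT}{\rm (P5)} \ge {\rm OPT}{\rm (P1)}$, i.e.\ the direction you (and the paper) establish rigorously.
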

\begin{proof}
Follows from Lemma~\ref{Lem1To2}.
\end{proof}

We may now proceed to do the same with the $i$ variables as follows.

\begin{lemma}\label{Lem2To3}
The maximum of {\rm (P2)} is achieved when for all $i, i'\in [R]$, we have 

$$y_i= y_{i'},e_i=e_{i'},f_{i}=f_{i'}, q_{i,0}=q_{i',0},q_{i}=q_{i'}, p_{i}=p_{i'}.$$ 
\end{lemma}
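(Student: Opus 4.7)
The plan is to mirror the symmetrization argument of Lemma~\ref{Lem1To2}, now applied to the outer sum over $i \in [R_0]$ instead of the inner sum over $j \in [R_i]$. Since the domain of {\rm(P2)} is closed and bounded and $\hat{W}_2$ is continuous on it (the denominators are strictly positive by Proposition~\ref{PositiveBound}), the objective attains a global maximum at some feasible point $P_0$.

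The key observation is that the summand for index $i$ in $\hat{W}_2$, namely
\[
S_i \;:=\; \frac{1}{q_{i,0}}\!\left(\frac{1}{e_i}-\frac{1}{e_0}\right) + r_i\!\left(\frac{1}{p_i}\!\left(\frac{1}{q_i}\!\left(\frac{1}{e_i}-\frac{1}{f_i}\right) + \frac{1}{q_{i,0}}\!\left(\frac{1}{e_i}-\frac{1}{e_0}\right)\right)\right),
\]
depends only on the global variables $x, e_0$ and the block of $i$-indexed variables $(y_i, e_i, f_i, q_{i,0}, q_i, p_i, r_i)$; no coupling occurs between distinct outer indices. Equally important, inspecting the constraints of {\rm(P2)} shows that each block $(y_i, e_i, f_i, q_{i,0}, q_i, p_i, r_i)$ is constrained only in terms of itself and the global variables $x, e_0, d$, and the form of these constraints is identical for every $i \in [R_0]$.

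Given this, I would pick an index $i^\ast \in [R_0]$ that maximizes $S_i$ at the point $P_0$, and form $P_0'$ from $P_0$ by replacing, for every $i \in [R_0]$, the block $(y_i, e_i, f_i, q_{i,0}, q_i, p_i, r_i)$ with the block at index $i^\ast$, while leaving $x$, $e_0$, and $R_0$ unchanged. Feasibility of $P_0'$ follows immediately because each individual block at $P_0$ is feasible, the constraints on the $i$-th block are identical across $i$, and no cross-index coupling exists. By construction $\hat{W}_2(P_0') = \frac{e_0}{v(G)}\, R_0\, S_{i^\ast} \geq \frac{e_0}{v(G)}\sum_{i=1}^{R_0} S_i = \hat{W}_2(P_0)$, and since $P_0$ is optimal we conclude $\hat{W}_2(P_0') = \hat{W}_2(P_0) = {\rm OPT}{\rm(P2)}$, establishing the lemma.

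I do not expect a real obstacle here: the whole argument is a clean application of the same averaging/replacement trick used in Lemma~\ref{Lem1To2}. The only place requiring care is verifying that every constraint in {\rm(P2)} is index-local (so that transplanting block $i^\ast$ into every coordinate preserves feasibility); this is a purely mechanical check against the constraint list for $y_i, e_i, f_i, q_{i,0}, q_i, p_i, r_i$, each of whose bounds involves at most the global variables $x, e_0, d$ and variables with the same subscript $i$.
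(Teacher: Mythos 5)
Your proposal is correct and follows essentially the same argument as the paper: take an optimal point, pick an outer index maximizing the per-$i$ summand, copy that block into every index, and note that the index-local, identical constraints preserve feasibility while the objective does not decrease. The only (harmless) difference is that you also transplant $r_i$ along with the rest of the block, which in fact handles the coupling constraint $r_i \in [0, q_{i,0}]$ slightly more explicitly than the paper's write-up.
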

\begin{proof}
Let $P_0$ be a point in the domain of $P_2$ that achieves the maximum of {\rm (P2)}. Let $I \in [R]$ such that 

$$\frac{1}{q_{i,0}} \left(\frac{1}{e_i} - \frac{1}{e_0}\right) + r_i \Bigg( \frac{1}{p_i} \left( \frac{1}{q_i}\left(\frac{1}{e_i} - \frac{1}{f_i}\right) + \frac{1}{q_{i,0}}\left(\frac{1}{e_i} - \frac{1}{e_0}\right) \right) \Bigg)$$

\noindent is maximized over all $i\in [R]$. Then the point $P_0'$ obtained from $P_0$ by setting $y_i = y_I, e_i= e_I, f_{i}=f_{I}, q_{i,0} = q_{I,0}, q_{i}=q_{I},$ and $p_{i}=p_{I}$ for all $i\in [R]$ is also a point that achieves this maximum. Moreover, since the constraints for the $y_i, e_i, f_i, q_{i,0},q_{i}, p_{i}$ are identical for each $i\in [R]$, it follows that $P_0'$ also satisfies the constraints of {\rm (P2)} as desired.
\end{proof}

Letting $r_0 = \frac{R_0}{v(G)}$, we form a new program {\rm (P3)} with a new objective function that has the same optimum value as {\rm (P2)} and hence as {\rm(P1)}:
$$\hat{W}_3(e_0,e,f,q_0,q,p,r_0,r) = e_0\cdot r_0\cdot \Bigg( \frac{1}{q_{0}} \left(\frac{1}{e} - \frac{1}{e_0}\right) + r \Bigg( \frac{1}{p} \left( \frac{1}{q}\left(\frac{1}{e} - \frac{1}{f}\right) + \frac{1}{q_{0}}\left(\frac{1}{e} - \frac{1}{e_0}\right) \right) \Bigg) \Bigg).$$
Here is the new program:

\begin{table}[!h]
  \begin{tabular}{llr@{}l}
      \multicolumn{1}{r}{{\rm (P3)}:} & \multicolumn{3}{l}{maximize $\hat{W}_3(e_0,e,f,q_0,q,p,r_0,r)$}  \\[0.2cm]
			\multicolumn{1}{r}{\text{s.t.}} & \multicolumn{3}{l}{$\ $}\\[0.2cm]
     &\textbf{I. Degree constraints: } &$x$ & $\ \in [1-d,1], $ \\
		&  &$y$ & $\ \in [1-d,1], $ \\[0.2cm]
		&\textbf{II. Triangle constraints: }  &$e_0$& $\ \in [x-d,x]$,  \\
		&  &$e$ &$\ \in [x+y-1,1]$, \\
&  &$f$&$\ \in [y-d,y]$,  \\[0.2cm]
&\textbf{III. $K_4$ constraints: }  &$q_{0}$&$\ \in [e+e_0-x,1]$.\\
&  &$q$&$\ \in [e+f-y,1]$, \\[0.2cm]
&\textbf{IV. $K_5$ constraints: }  &$p$ & $\ \in [q_{0}+f-y,1]$,\\[0.2cm]
&\textbf{V. Number of terms constraints: }  &$r_0$&$\ \in [0,e_0]$,\\
& &$r$&$\ \in [0,q_{0}]$.\\
  \end{tabular}
\end{table}

\begin{corollary}\label{Cor1To3}
{\rm OPT}{\rm (P3)} = {\rm OPT}{\rm(P1)}.
\end{corollary}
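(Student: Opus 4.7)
The plan is to combine Lemma~\ref{Lem2To3} with Corollary~\ref{Cor1To2}. By Corollary~\ref{Cor1To2}, we already know ${\rm OPT}{\rm (P1)} = {\rm OPT}{\rm (P2)}$, so it suffices to show ${\rm OPT}{\rm (P2)} = {\rm OPT}{\rm (P3)}$. The idea is that Lemma~\ref{Lem2To3} says there is an optimizer of {\rm (P2)} in which all the index-$i$ variables collapse to common values $y, e, f, q_0, q, p$, and also the multipliers $r_i$ can be taken equal at such a point (by an analogous exchange argument applied to $r_i$, or by observing the objective is linear in each $r_i$ with identical feasible ranges once the other coordinates agree).

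First, I would invoke Lemma~\ref{Lem2To3} to obtain a point $P_0^\ast$ achieving the maximum of {\rm (P2)} with $y_i = y$, $e_i = e$, $f_i = f$, $q_{i,0} = q_0$, $q_i = q$, $p_i = p$ for all $i \in [R_0]$. Evaluating $\hat{W}_2$ at such a point yields
\[
\hat{W}_2 = \frac{e_0}{v(G)}\cdot R_0 \cdot \Bigg( \frac{1}{q_0}\Big(\frac{1}{e}-\frac{1}{e_0}\Big) + r_I\Big(\frac{1}{p}\Big(\frac{1}{q}\Big(\frac{1}{e}-\frac{1}{f}\Big) + \frac{1}{q_0}\Big(\frac{1}{e}-\frac{1}{e_0}\Big)\Big)\Big)\Bigg),
\]
where $r_I$ denotes the common maximizing value of $r_i$ (again by a one-line exchange, since at a maximizer the index $i$ that maximizes the per-$i$ summand is attained by all $i$ once the other coordinates are symmetrized, so the common value $r_I$ is feasible within $[0,q_0]$).

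Next, setting $r_0 := R_0/v(G)$ and renaming $r_I$ as $r$, the displayed expression becomes exactly $\hat{W}_3(e_0,e,f,q_0,q,p,r_0,r)$. The constraints on $r_0$ transform correctly: the constraint $R_0 \in [0, e_0\cdot v(G)]$ in {\rm (P2)} becomes $r_0 \in [0,e_0]$ in {\rm (P3)}, and the remaining constraints I--IV on $x,y,e_0,e,f,q_0,q,p$ are literally identical. Hence any feasible point of {\rm (P2)} satisfying the conclusion of Lemma~\ref{Lem2To3} gives rise to a feasible point of {\rm (P3)} with the same objective value, and conversely any feasible point of {\rm (P3)} gives rise to a feasible point of {\rm (P2)} (take $R_0 = \lfloor r_0 v(G)\rfloor$ and all $i$-indexed variables equal to their counterparts) whose objective value agrees in the limit.

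The only subtle point that warrants care in the writeup is the integrality of $R_0$: since $R_0$ must be an integer while $r_0$ is continuous, one should argue that ${\rm OPT}{\rm (P2)}$ and ${\rm OPT}{\rm (P3)}$ agree by noting that $\hat{W}_3$ is linear in $r_0$ and continuous in the remaining variables, so replacing $r_0$ by the nearest integer multiple of $1/v(G)$ changes the objective by an amount that can be absorbed; however the authors appear to treat $v(G)$ as large/asymptotic and the bounds on the densities as continuous, so one can simply quote Lemma~\ref{Lem2To3} and the identification $r_0 = R_0/v(G)$. I expect this bookkeeping between integer counts and continuous densities to be the only step requiring a sentence of justification; the rest is a direct substitution.
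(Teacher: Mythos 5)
Your proposal is correct and follows essentially the same route as the paper, whose entire proof is the one-line citation of Lemma~\ref{Lem2To3} together with Corollary~\ref{Cor1To2}; your elaboration of the substitution $r_0 = R_0/v(G)$ and the equalization of the $r_i$ (via linearity of the objective in each $r_i$ over the common range $[0,q_0]$) just makes explicit what the paper leaves implicit. The integrality worry is harmless but unnecessary, since the programs already treat $R_0$ as a continuous variable in $[0, e_0\cdot v(G)]$, and in any case only the inequality ${\rm OPT}{\rm (P3)} \ge {\rm OPT}{\rm (P1)}$ is used downstream.
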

\begin{proof}
Follows from Lemma~\ref{Lem2To3} and Corollary~\ref{Cor1To2}.
\end{proof}

\section{Solving the Program}\label{Solving}

We do not actually solve {\rm (P3)}, rather in Subsection~\ref{ReduceTo6}, we upper bound {\rm (P3)} with a new program {\rm (P4)} that uses ramp functions (maximum of a function and 0) for each of the three main terms. This is the key that allows us to slowly reduce the number of variables, first to six {\rm (P5)}, then to five {\rm (P6)}, four {\rm (P7)}, three {\rm (P8)}, two {\rm (P9)}, one {\rm (P10)} and then we actually find the maximum point of {\rm (P10)}. Our value of $d$ in Theorem~\ref{RealMain} is precisely the maximum value of $d$ such that the value of this maximum point is 1 as required.

\subsection{Reduction to Six Variables}\label{ReduceTo6}

\begin{definition}
For a real-valued function $w(u)$ where $u\in \mathbb{R}^n$, define $w^+(u) = \frac{w(u)+|w(u)|}{2}$, that is the \emph{ramp function} of $w(u)$, i.e.~taking $w(u)$ if the function has positive value and $0$ otherwise.
\end{definition}

We note the following basic fact about ramp functions. 

\begin{proposition}
If $w$ is a real-valued function that is continuous on a region $R$, then $w^+$ is continuous on $R$.
\end{proposition}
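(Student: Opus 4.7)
The plan is to express the ramp function as a composition and combination of known continuous operations and invoke standard elementary-analysis results. From the defining identity $w^+(u) = \tfrac{1}{2}\bigl(w(u) + |w(u)|\bigr)$, the function $w^+$ is obtained from $w$ by taking an absolute value, adding back the original function, and scaling by a constant.

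First, I would observe that the absolute value map $t \mapsto |t|$ is continuous on $\mathbb{R}$; this follows from the reverse triangle inequality $\bigl||a| - |b|\bigr| \le |a - b|$, which shows that it is 1-Lipschitz. Since the composition of continuous functions is continuous and $w$ is continuous on $R$ by hypothesis, the function $u \mapsto |w(u)|$ is continuous on $R$. Second, sums and scalar multiples of continuous real-valued functions are continuous, so combining $w$ and $|w|$ with the factor $\tfrac{1}{2}$ yields continuity of $w^+$ on $R$.

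There is essentially no obstacle: the statement reduces immediately to elementary facts about continuous real-valued functions. If a direct $\varepsilon$--$\delta$ argument were preferred, one could instead verify the pointwise bound $\bigl|w^+(u) - w^+(u')\bigr| \le |w(u) - w(u')|$ for every $u, u' \in R$, which follows from the same 1-Lipschitz estimate applied to $a = w(u)$ and $b = w(u')$; this transfers continuity (indeed, the modulus of continuity) from $w$ directly to $w^+$ without invoking composition results.
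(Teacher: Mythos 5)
Your proof is correct: writing $w^+ = \tfrac{1}{2}(w+|w|)$ and using that $t\mapsto|t|$ is $1$-Lipschitz, together with continuity of sums, scalar multiples, and compositions, is exactly the standard argument. The paper states this proposition as a basic fact without proof, so your argument simply supplies the routine verification the authors took for granted; either of your two phrasings (composition of continuous maps, or the direct bound $|w^+(u)-w^+(u')|\le|w(u)-w(u')|$) is fine.
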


We now construct a new program {\rm (P4)} whose minimum is at most that of {\rm (P3)} by retaining the same constraints but changing the objective function. Namely, we replace each cancellation in $W_3$ by its ramp function as follows:

\begin{align*}
\hat{W}_4(e_0,e,f,q_0,q,p,r_0,r) &= e_0\cdot r_0\cdot \Bigg( \frac{(e_0-e)^+}{q_{0}ee_0}  + r\frac{(f-e)^+}{pqef} + r\frac{(e_0-e)^+}{pq_{0}ee_0} \Bigg)\\
&= \frac{r_0(e_0-e)^+}{q_0e} + \frac{e_0r_0r(f-e)^+}{pqef} + \frac{e_0r_0r(e_0-e)^+}{pq_0ee_0}.
\end{align*}

\begin{lemma}\label{Lem3To4}
$\hat{W}_4 \ge 0$ and $\hat{W}_4 \ge \hat{W}_3$ in the domain of {\rm (P4)}.
\end{lemma}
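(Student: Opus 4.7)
The plan is to deduce both inequalities term-by-term via two elementary facts: the ramp $w^+ := \max(w,0)$ satisfies $w^+ \ge 0$ and $w^+ \ge w$ pointwise, and each of the three summands of $\hat{W}_4$ is a product of $(e_0-e)^+$ or $(f-e)^+$ with a non-negative coefficient (and similarly for $\hat{W}_3$ with the unramped differences). Since the three summands of $\hat{W}_3$ have exactly the same coefficients as those of $\hat{W}_4$, both conclusions reduce to checking sign information.

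First I would verify that every denominator appearing in either $\hat{W}_3$ or $\hat{W}_4$ is strictly positive throughout the domain of {\rm (P4)}. The variables $e_0, e, f, q_0, q, p$ each have lower bounds from the Triangle, $K_4$, and $K_5$ constraints of the form $\hat{N}(S) - d$ or $\hat{N}(A) + \hat{N}(B) - \hat{N}(A \cap B)$; since $d = (7 - \sqrt{21})/14 < 1/4$, Propositions~\ref{PositiveBound} and~\ref{PositiveBound2} make each of these lower bounds strictly positive. Together with $r_0, r \ge 0$ from the Number of Terms constraints and $e_0 \ge 1-2d > 0$, this shows that the three coefficients $\frac{r_0}{q_0 e}$, $\frac{e_0 r_0 r}{pqef}$, and $\frac{e_0 r_0 r}{pq_0 e e_0}$ multiplying the (possibly ramped) cancellations are all non-negative.

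The inequality $\hat{W}_4 \ge 0$ is then immediate, since each summand of $\hat{W}_4$ is a non-negative coefficient times a non-negative ramp. For $\hat{W}_4 \ge \hat{W}_3$, I would match each of the three summands of $\hat{W}_4$ with the corresponding summand of $\hat{W}_3$; within each matched pair the coefficient is identical and non-negative, and the only difference is that $(e_0 - e)^+$ (resp.~$(f-e)^+$) replaces $(e_0-e)$ (resp.~$(f-e)$). Since $w^+ \ge w$, each term of $\hat{W}_4$ dominates the corresponding term of $\hat{W}_3$, and summing the three inequalities yields $\hat{W}_4 \ge \hat{W}_3$. I do not anticipate a real obstacle here: the only slightly delicate step is the positivity bookkeeping for the denominators, and that is a routine application of Propositions~\ref{PositiveBound} and~\ref{PositiveBound2} given that our choice of $d$ is comfortably below $1/4$.
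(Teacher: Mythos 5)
Your proposal is correct and follows essentially the same route as the paper: the paper's proof is just the one-line observation that $e_0,e,f,q_0,q,p>0$ and $r_0,r\ge 0$ in the domain of {\rm (P4)}, from which both inequalities follow term-by-term exactly as you argue. Your write-up simply spells out the positivity bookkeeping (via Propositions~\ref{PositiveBound} and~\ref{PositiveBound2}) and the pointwise facts $w^+\ge 0$, $w^+\ge w$ that the paper leaves implicit.
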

\begin{proof}
This follows since $e_0,e,f,q_0,q,p>0$ and $r_0,r \ge 0$ in the domain of {\rm (P4)}.
\end{proof}

\begin{corollary}\label{Cor1To4}
${\rm OPT}{\rm (P4)}\ge {\rm OPT}{\rm (P3)} = {\rm OPT}{\rm (P1)}.$
\end{corollary}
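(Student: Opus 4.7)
The plan is to observe that this corollary is essentially immediate from two prior facts assembled in the excerpt. First, Corollary~\ref{Cor1To3} already gives the equality ${\rm OPT}{\rm (P3)} = {\rm OPT}{\rm (P1)}$, so the only new content here is the inequality ${\rm OPT}{\rm (P4)} \ge {\rm OPT}{\rm (P3)}$. Second, programs (P3) and (P4) were defined to share exactly the same constraint set (the Degree, Triangle, $K_4$, $K_5$, and Number-of-terms constraints are identical; only the objective was modified by replacing each of the three cancellation terms with its ramp). Hence the two programs have a common feasible region, call it $\mathcal{D}$.

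Next I would invoke Lemma~\ref{Lem3To4}, which gives the pointwise inequality $\hat{W}_4(e_0,e,f,q_0,q,p,r_0,r) \ge \hat{W}_3(e_0,e,f,q_0,q,p,r_0,r)$ at every point of $\mathcal{D}$. Taking the supremum of each side over $\mathcal{D}$ yields
\[
{\rm OPT}{\rm (P4)} \;=\; \sup_{\mathcal{D}} \hat{W}_4 \;\ge\; \sup_{\mathcal{D}} \hat{W}_3 \;=\; {\rm OPT}{\rm (P3)}.
\]
Combining this with Corollary~\ref{Cor1To3} gives the chain ${\rm OPT}{\rm (P4)} \ge {\rm OPT}{\rm (P3)} = {\rm OPT}{\rm (P1)}$, which is exactly the statement.

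There is essentially no obstacle: the only thing worth double checking is that the feasible regions of (P3) and (P4) genuinely coincide (they do, by inspection of the tables), and that the pointwise bound in Lemma~\ref{Lem3To4} really holds on the full domain of (P4) (which is asserted there). The proof therefore reduces to a two-line argument assembling these inputs, without any further calculation.
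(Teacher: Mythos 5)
Your proposal is correct and matches the paper's (implicit) argument exactly: (P4) keeps the constraints of (P3) and only the objective changes, so the pointwise bound $\hat{W}_4 \ge \hat{W}_3$ from Lemma~\ref{Lem3To4} gives ${\rm OPT}{\rm (P4)} \ge {\rm OPT}{\rm (P3)}$, and Corollary~\ref{Cor1To3} supplies the equality with ${\rm OPT}{\rm (P1)}$. Nothing further is needed.
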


\begin{lemma}\label{Lem4To5}
The maximum of {\rm (P4)} is achieved when all of the following hold:
\begin{itemize}
\item $r=q_0$,
\item $r_0=e_0$,
\item $p=q_0+f-y$,
\item $q=e+f-y$.
\end{itemize}
\end{lemma}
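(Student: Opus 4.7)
The plan is to prove the lemma by an elementary monotonicity argument: I will show that $\hat{W}_4$ is monotone in each of the four variables $r_0, r, p, q$ separately, and hence is maximized when each is pushed to the appropriate endpoint of its feasible interval. Existence of the maximum itself follows from compactness of the feasible region together with continuity of $\hat{W}_4$ (note that the ramp function preserves continuity, and every denominator is bounded away from $0$).

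First I would observe that, by Propositions~\ref{PositiveBound} and~\ref{PositiveBound2} together with the fact that $d < 1/4$, every base variable $e_0, e, f, q_0, q, p$ in the domain of {\rm (P4)} is strictly positive (the lower bounds of the form $e+f-y$ and $q_0+f-y$ are strictly positive by Proposition~\ref{PositiveBound2}). The ramp factors $(e_0-e)^+$ and $(f-e)^+$ are non-negative by definition, and $r_0, r \geq 0$. Hence every term of
\[
\hat{W}_4 \;=\; \frac{r_0(e_0-e)^+}{q_0 e} \;+\; \frac{e_0\,r_0\,r\,(f-e)^+}{p\,q\,e\,f} \;+\; \frac{e_0\,r_0\,r\,(e_0-e)^+}{p\,q_0\,e\,e_0}
\]
is a product of non-negative quantities divided by strictly positive quantities.

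Next I would push each of the four variables to its extremum in turn, noting crucially that none of $r_0, r, p, q$ appears in any constraint that bounds another variable of {\rm (P4)}, so the monotonicity arguments are independent and can be applied simultaneously. The variable $r_0$ appears only as a non-negative linear factor in every term, so $\hat{W}_4$ is non-decreasing in $r_0$ and is maximized at $r_0 = e_0$, the upper endpoint of its interval $[0, e_0]$. The variable $r$ appears only as a non-negative linear factor in the second and third terms (and is absent from the first), so similarly $\hat{W}_4$ is non-decreasing in $r$ and is maximized at $r = q_0$. The variable $p$ appears only in the denominators of the second and third terms, each multiplied by a non-negative numerator, so $\hat{W}_4$ is non-increasing in $p$ and is maximized at the lower endpoint $p = q_0 + f - y$. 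Finally, $q$ appears only in the denominator of the second term, multiplied by a non-negative numerator, so $\hat{W}_4$ is non-increasing in $q$ and is maximized at the lower endpoint $q = e + f - y$.

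There is no real obstacle here; the point of the lemma is precisely that the ramp substitution introduced in {\rm (P4)} buys us exactly this monotonicity. In the original $\hat{W}_3$ the factors $(e_0-e)$ and $(f-e)$ could be negative, and then decreasing $p$ or $q$ would decrease (not increase) the objective, so one could not simply push variables to their extremes. The ramp replacement fixes the signs, and then the argument reduces to reading off the sign of each variable's dependence in the formula above.
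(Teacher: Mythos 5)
Your proposal is correct and follows essentially the same route as the paper: the paper also compares a maximizing point with the point obtained by setting $r_0=e_0$, $r=q_0$, $p=q_0+f-y$, $q=e+f-y$, checks feasibility using that these four variables do not appear in the constraints of other variables, and verifies term by term that the objective does not decrease, using Propositions~\ref{PositiveBound} and~\ref{PositiveBound2} to ensure the relevant factors are positive (denominators) or non-negative (numerators). Your framing as monotonicity in each of the four variables is just a slight repackaging of the paper's term-by-term comparison, so there is nothing further to add.
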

\begin{proof}
Let $P_0=(x,y,e_0,e,f,q_0,q,p,r_0,r)$ be a point that achieves the maximum of {\rm (P4)}. Let $r'=q_0, r_0'=e_0, p'=q_0+f-y, q'= e+f-y$. Let $P_0' = (x,y,e_0,e,f,q_0,q',p',r_0',r')$. Note that $P_0'$ is in the domain of {\rm (P4)} since none of $r_0,r,p,$ or $q$ appear in the constraints of other variables in {\rm (P4)}. 

It suffices to prove that $\hat{W}_4(P_0') \ge \hat{W}_4(P_0)$. Since $P_0$ is in the domain of {\rm (P4)}, we have that $r\le r', r_0\le r_0', p\ge p', q\ge q'$. Since $P_0$ is in the domain of {\rm (P4)}, we have by Proposition~\ref{PositiveBound} that $x,y,e_0,e,f,q_0,q,q > 0$ and $r_0,r \ge 0$. Hence all the factors in denominators in $\hat{W_4}$ are strictly positive while all factors in numerators in $\hat{W_4}$ are non-negative. But then
$$\frac{r_0(e_0-e)^+}{q_0e} \le \frac{r_0'(e_0-e)^+}{q_0e},$$
\noindent and
$$\frac{e_0r_0r(f-e)^+}{pqef} \le \frac{e_0r_0'r'(f-e)^+}{p'q'ef},$$
\noindent and
$$\frac{e_0r_0r(e_0-e)^+}{pq_0ee_0} \le \frac{e_0r_0'r'(e_0-e)^+}{p'q_0ee_0}.$$
\noindent Hence $\hat{W}_4(P_0') \ge \hat{W}_4(P_0)$ as desired.
\end{proof}

Thus we construct a new program {\rm (P5)} with the following new objective function and a subset of the previous constraints as follows:
\begin{align*}
\hat{W}_5(y,e_0,e,f,q_0) &= \frac{e_0(e_0-e)^+}{q_{0}e}  +  \frac{e_0^2 q_0(f-e)^+}{(q_0+f-y)(e+f-y)ef} + \frac{e_0(e_0-e)^+}{(q_0+f-y)e}
\end{align*}

Here is the new program:

\begin{table}[!h]
  \begin{tabular}{llr@{}l}
      \multicolumn{1}{r}{{\rm (P5)}:} & \multicolumn{3}{l}{maximize $\hat{W}_5(y,e_0,e,f,q_0)$}  \\[0.2cm]
			\multicolumn{1}{r}{\text{s.t.}} & \multicolumn{3}{l}{$\ $}\\[0.2cm]
     &\textbf{I. Degree constraints: } &$x$ & $\ \in [1-d,1], $ \\
		&  &$y$ & $\ \in [1-d,1], $ \\[0.2cm]
		&\textbf{II. Triangle constraints: }  &$e_0$& $\ \in [x-d,x]$,  \\
		&  &$e$ &$\ \in [x+y-1,1]$, \\
&  &$f$&$\ \in [y-d,y]$,  \\[0.2cm]
&\textbf{III. $K_4$ constraints: }  &$q_{0}$&$\ \in [e+e_0-x,1]$.\\
  \end{tabular}
\end{table}

\newpage
\begin{corollary}
${\rm OPT}{\rm (P5)} \ge {\rm OPT}{\rm (P1)}.$
\end{corollary}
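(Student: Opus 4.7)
The plan is to combine Lemma~\ref{Lem4To5} with Corollary~\ref{Cor1To4} via a direct substitution argument. First, I would invoke Lemma~\ref{Lem4To5} to obtain a point $P_0 = (x,y,e_0,e,f,q_0,q,p,r_0,r)$ in the domain of {\rm (P4)} that achieves ${\rm OPT}{\rm (P4)}$ and additionally satisfies the four identities $r_0 = e_0$, $r = q_0$, $p = q_0+f-y$, $q = e+f-y$.

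Next, I would substitute these four identities into the three terms of $\hat{W}_4$ to verify that $\hat{W}_4(P_0) = \hat{W}_5(y, e_0, e, f, q_0)$. This is a short mechanical calculation: the first term $\frac{r_0(e_0-e)^+}{q_0 e}$ becomes $\frac{e_0(e_0-e)^+}{q_0 e}$; the second term $\frac{e_0 r_0 r (f-e)^+}{p q e f}$ becomes $\frac{e_0^2 q_0 (f-e)^+}{(q_0+f-y)(e+f-y) e f}$; and the third term $\frac{e_0 r_0 r (e_0-e)^+}{p q_0 e e_0}$ collapses, after cancelling $q_0$ in the denominator against $r = q_0$ in the numerator and $e_0$ against $r_0 = e_0$, to $\frac{e_0(e_0-e)^+}{(q_0+f-y)e}$.

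The final step is to observe that the constraints of {\rm (P5)} are exactly the subset of constraints of {\rm (P4)} that mention only $x, y, e_0, e, f, q_0$. In particular, the four constraints in {\rm (P4)} that involve $r_0, r, p$, or $q$ only bound those variables in terms of the remaining ones and do not restrict the projection $(x,y,e_0,e,f,q_0)$. Therefore the projection $P_0'$ of $P_0$ onto these six coordinates is feasible for {\rm (P5)}, and by the previous paragraph $\hat{W}_5(P_0') = \hat{W}_4(P_0) = {\rm OPT}{\rm (P4)}$. This yields ${\rm OPT}{\rm (P5)} \ge {\rm OPT}{\rm (P4)}$, and combining with Corollary~\ref{Cor1To4} gives ${\rm OPT}{\rm (P5)} \ge {\rm OPT}{\rm (P1)}$.

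Since every step is either a direct appeal to a previously established statement or a routine algebraic simplification, I do not anticipate a genuine obstacle here; the only thing to be mildly careful about is checking that all factors in the denominators of $\hat{W}_5$ remain strictly positive on the domain of {\rm (P5)} (so that $\hat{W}_5$ is well-defined and continuous), which follows from Propositions~\ref{PositiveBound} and~\ref{PositiveBound2} since all the relevant sets have size at most four.
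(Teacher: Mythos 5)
Your proof is correct and matches the paper's intended argument: the corollary follows from Lemma~\ref{Lem4To5} by substituting the four identities into $\hat{W}_4$ (which yields exactly $\hat{W}_5$), noting that the constraints of {\rm (P5)} are the subset of those of {\rm (P4)} on the remaining six variables, and chaining with Corollary~\ref{Cor1To4}. The paper leaves this verification implicit, and your mechanical check of the substitution and feasibility is exactly what is needed.
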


\subsection{Reduction to Four Variables}\label{ReduceTo4}

We proceed with reducing $q_0$ as follows.
\begin{lemma}\label{Lem5To6}
The maximum of {\rm (P5)} is achieved when $q_0=e+e_0-x$. 
\end{lemma}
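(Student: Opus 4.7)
The plan is to show that $\hat{W}_5$ is a monotonically non-increasing function of $q_0$ when all of the other variables $y, e_0, e, f$ are held fixed at any feasible values. Since none of the other constraints in {\rm (P5)} involve $q_0$, this immediately implies that the maximum is achieved at the lower endpoint $q_0 = e + e_0 - x$ of the interval $[e+e_0-x, 1]$.

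To carry this out, I would analyze each of the three summands of $\hat{W}_5$ separately. The first summand $\frac{e_0(e_0-e)^+}{q_0 e}$ is manifestly non-increasing in $q_0$: its numerator $(e_0-e)^+$ is non-negative, and the denominator grows linearly in $q_0$ while $e > 0$ by Proposition~\ref{PositiveBound}. The third summand $\frac{e_0(e_0-e)^+}{(q_0+f-y)e}$ is handled the same way, provided we first verify that $q_0 + f - y > 0$ throughout the feasible region. This positivity follows from Proposition~\ref{PositiveBound2} applied with the underlying sets $A = \{x_1, x_2, y_i\}$ and $B = \{y_i, z_{i,j}\}$, whose union has four vertices; equivalently, this is exactly the inequality guaranteeing that the $K_5$ constraint $p \geq q_0 + f - y$ is compatible with $p > 0$.

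The main obstacle is the middle summand $\frac{e_0^2 q_0 (f-e)^+}{(q_0+f-y)(e+f-y)ef}$, in which $q_0$ appears both in the numerator and in the denominator. The key observation is that $f \leq y$ by the triangle constraint $f \in [y-d, y]$, so writing $c := f - y \leq 0$ one computes
$$\frac{d}{d q_0}\left(\frac{q_0}{q_0 + c}\right) = \frac{c}{(q_0 + c)^2} \leq 0.$$
Thus the factor $\frac{q_0}{q_0 + (f-y)}$ is also non-increasing in $q_0$; since the remaining factor $\frac{e_0^2 (f-e)^+}{(e+f-y)\, e\, f}$ is independent of $q_0$ and non-negative (with $e+f-y > 0$ by another application of Proposition~\ref{PositiveBound2}, this time to sets of total size three), the whole summand is non-increasing in $q_0$ as well.

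Combining these three bounds, $\hat{W}_5$ is non-increasing in $q_0$ on the entire interval $[e+e_0-x, 1]$, so the supremum over $q_0$ is realized at $q_0 = e+e_0-x$. Feasibility of this choice is automatic, and $q_0 > 0$ there by Proposition~\ref{PositiveBound2} applied to $\{x_1, x_2\}$ and $\{x_1, y_i\}$. I expect the handling of the middle term via the sign of $f - y$ to be the crux of the argument; the other two terms are routine once one has the domain positivity from Propositions~\ref{PositiveBound} and~\ref{PositiveBound2}.
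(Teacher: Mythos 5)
Your proposal is correct and follows essentially the same route as the paper: fix the other variables, observe $q_0$ appears in no other constraint, and compare term by term, with the crux being that $\frac{q_0}{q_0+f-y}$ is non-increasing in $q_0$ because $f-y\le 0$ (the paper does this by cross-multiplying to $(f-y)q_0\le (f-y)q_0'$ rather than by a derivative, and likewise justifies $q_0+f-y,\,e+f-y>0$ via Propositions~\ref{PositiveBound} and~\ref{PositiveBound2}). The only cosmetic difference is your use of a derivative in place of the paper's direct algebraic comparison.
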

\begin{proof}
Let $P_0 = (x,y,e_0,e,f,q_0)$ be a point that achieves the maximum of {\rm (P5)}. Let $q_0'=e+e_0-x$. Let $P_0' = (x,y,e_0,e,f,q_0')$. Note that $P_0'$ is in the domain of {\rm (P5)} since $q_0$ does not appear in the constraints of other variables in {\rm (P5)}. 

It suffices to prove that $\hat{W}_5(P_0') \ge \hat{W}_5(P_0)$. Since $P_0$ is in the domain of {\rm (P5)}, we have that $q_0\ge q_0'$. Since $P_0$ is in the domain of {\rm (P5)}, we have by Proposition~\ref{PositiveBound} that $x,y,e_0,e,f,q_0 > 0$ and also by Proposition~\ref{PositiveBound2} that $q_0+f-y, e+f-y>0$. But then
$$\frac{e_0(e_0-e)^+}{q_{0}e} \le   \frac{e_0(e_0-e)^+}{q_{0}'e},$$
\noindent and
$$\frac{e_0(e_0-e)^+}{(q_0+f-y)e}\le \frac{e_0(e_0-e)^+}{(q_0'+f-y)e}.$$
\noindent Since $f-y \le 0$ and $q_0'+f-y > 0$, it follows that $(f-y)q_0 \le (f-y)q_0'$ and hence
$$\frac{q_0}{q_0+f-y} \le \frac{q_0'}{q_0'+f-y}.$$
\noindent Thus,
$$\frac{e_0^2 q_0(f-e)^+}{(q_0+f-y)(e+f-y)ef} \le \frac{e_0^2 q_0'(f-e)^+}{(q_0'+f-y)(e+f-y)ef}.$$
\noindent Hence $\hat{W}_5(P_0') \ge \hat{W}_5(P_0)$ as desired.
\end{proof}

Thus we may replace {\rm (P5)} with a new program {\rm (P6)} whose minimum value is at most that of {\rm (P5)} by setting $q_0=e+e_0-x$ as follows:
\begin{align*}
\hat{W}_6(x,y,e_0,e,f) &= \frac{e_0(e_0-e)^+}{(e+e_0-x)e} + \frac{e_0^2 (e+e_0-x)(f-e)^+}{(e+e_0-x+f-y)(e+f-y)ef} + \frac{e_0(e_0-e)^+}{(e+e_0-x+f-y)e}.
\end{align*}

Here is the new program:

\begin{table}[!h]
  \begin{tabular}{llr@{}l}
      \multicolumn{1}{r}{{\rm (P6)}:} & \multicolumn{3}{l}{maximize $\hat{W}_6(x,y,e_0,e,f)$}  \\[0.2cm]
			\multicolumn{1}{r}{\text{s.t.}} & \multicolumn{3}{l}{$\ $}\\[0.2cm]
     &\textbf{I. Degree constraints: } &$x$ & $\ \in [1-d,1], $ \\
		&  &$y$ & $\ \in [1-d,1], $ \\[0.2cm]
		&\textbf{II. Triangle constraints: }  &$e_0$& $\ \in [x-d,x]$,  \\
		&  &$e$ &$\ \in [x+y-1,1]$, \\
&  &$f$&$\ \in [y-d,y]$.  \\[0.2cm]
  \end{tabular}
\end{table}

\newpage
\begin{corollary}
${\rm OPT}{\rm (P6)} \ge {\rm OPT}{\rm (P1)}.$
\end{corollary}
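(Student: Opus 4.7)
The plan is to chain Lemma~\ref{Lem5To6} with the preceding corollary, which already gives ${\rm OPT}{\rm (P5)} \ge {\rm OPT}{\rm (P1)}$. First I would observe that, by construction, $\hat{W}_6$ is precisely the function obtained from $\hat{W}_5$ by the substitution $q_0 = e+e_0-x$; one checks this term by term against the displayed formula for $\hat{W}_6$. Moreover, the constraint set of (P6) is exactly the constraint set of (P5) restricted to the variables $x,y,e_0,e,f$: the only constraint dropped is the one on $q_0$, and that variable no longer appears.

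Next I would invoke Lemma~\ref{Lem5To6} to produce a maximizer $(x^*,y^*,e_0^*,e^*,f^*,q_0^*)$ of (P5) satisfying $q_0^* = e^* + e_0^* - x^*$. The projected tuple $(x^*,y^*,e_0^*,e^*,f^*)$ is feasible for (P6) by the constraint observation above, and evaluating $\hat{W}_6$ there exactly recovers $\hat{W}_5$ evaluated at the original maximizer. Combining with the preceding corollary yields
\begin{align*}
{\rm OPT}{\rm (P6)} &\ge \hat{W}_6(x^*,y^*,e_0^*,e^*,f^*) \\
&= \hat{W}_5(x^*,y^*,e_0^*,e^*,f^*,q_0^*) \\
&= {\rm OPT}{\rm (P5)} \\
&\ge {\rm OPT}{\rm (P1)},
\end{align*}
which is the claim.

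There is no genuine obstacle here: the real content lies in Lemma~\ref{Lem5To6}, whose proof uses the strict positivity afforded by Propositions~\ref{PositiveBound} and~\ref{PositiveBound2} to justify that pushing $q_0$ down to its lower endpoint $e+e_0-x$ does not decrease $\hat{W}_5$. The corollary itself is a short bookkeeping step that records the effect of substituting this active constraint into the objective, trading the variable $q_0$ for an inequality on the optimal values.
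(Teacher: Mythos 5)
Your argument is correct and matches the paper's (implicit) reasoning: the paper obtains (P6) from (P5) precisely by substituting the active lower bound $q_0=e+e_0-x$ justified by Lemma~\ref{Lem5To6}, and then chains ${\rm OPT}{\rm (P6)}\ge{\rm OPT}{\rm (P5)}\ge{\rm OPT}{\rm (P1)}$ exactly as you do. The only cosmetic quibble is that $\hat{W}_5$ is written in the paper as a function of $(y,e_0,e,f,q_0)$ with $x$ entering only through the constraints, so your evaluation step is fine but the argument list should be adjusted accordingly.
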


We now proceed with reducing $e$ as follows.
\begin{lemma}\label{Lem6To7}
The maximum of {\rm (P6)} is achieved when $e=x+y-1$. 
\end{lemma}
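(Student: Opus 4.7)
The plan is to mirror the template of Lemma~\ref{Lem5To6}: start with a maximizer $P_0 = (x,y,e_0,e,f)$ of {\rm(P6)}, replace $e$ by $e' := x+y-1$ to form $P_0' = (x,y,e_0,e',f)$, note that $P_0'$ lies in the domain of {\rm(P6)} since $e$ appears in no other variable's constraint, and then show $\hat{W}_6(P_0') \ge \hat{W}_6(P_0)$. For this last step I would argue that each of the three summands of $\hat{W}_6$, call them $T_1, T_2, T_3$ in the order written, is non-decreasing as $e$ decreases while all other variables are held fixed.

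For $T_1 = e_0(e_0-e)^+/((e+e_0-x)e)$ and $T_3 = e_0(e_0-e)^+/((e+e_0-x+f-y)e)$ this is essentially immediate: the ramp $(e_0-e)^+$ is non-decreasing as $e$ decreases, and every factor in the denominators is strictly positive on the domain (by Proposition~\ref{PositiveBound2} for $e+e_0-x$, and by the computation $e+e_0-x+f-y \ge (x+y-1)+(x-d)-x+(y-d)-y \ge 1-4d > 0$ using the lower bounds together with $d<1/4$) and strictly decreasing in $e$. Hence both fractions are non-decreasing as $e$ decreases.

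The main obstacle is the middle term
$$T_2 = \frac{e_0^2(e+e_0-x)(f-e)^+}{(e+e_0-x+f-y)(e+f-y)ef},$$
whose numerator contains both a factor $(e+e_0-x)$ that decreases with $e$ and a ramp $(f-e)^+$ that increases as $e$ decreases, so the direction of monotonicity is not a priori obvious. I would handle the trivial case $e \ge f$ (where $T_2 = 0$) separately, and in the case $e < f$ compute
$$\frac{\partial \log T_2}{\partial e} = \frac{1}{e+e_0-x} - \frac{1}{f-e} - \frac{1}{e+e_0-x+f-y} - \frac{1}{e+f-y} - \frac{1}{e}.$$
The decisive observation will be that $f \le y$ forces $f-y \le 0$, so $\frac{1}{e+e_0-x+f-y} \ge \frac{1}{e+e_0-x}$; this alone absorbs the single positive contribution, and the three remaining negative terms yield $\partial \log T_2 / \partial e \le 0$. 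Since $T_2 > 0$ in this region and is continuous in $e$ across $e=f$, $T_2$ is non-increasing in $e$ on the entire domain. Summing the three monotonicities gives $\hat{W}_6(P_0') \ge \hat{W}_6(P_0)$ as required.
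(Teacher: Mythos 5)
Your proposal is correct and follows essentially the same route as the paper: substitute $e'=x+y-1$ at a maximizer and show each of the three ramp terms is non-increasing in $e$, with the decisive use of $f\le y$ to control the factor $(e+e_0-x)/(e+e_0-x+f-y)$ in the middle term. The only cosmetic difference is that you verify the middle term's monotonicity via the sign of $\partial\log T_2/\partial e$ rather than the paper's cross-multiplied ratio inequalities, which is the same underlying argument.
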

\begin{proof}
Let $P_0 = (x,y,e_0,e,f)$ be a point that achieves the maximum of {\rm (P6)}. Let $e'=x+y-1$. Let $P_0' = (x,y,e_0,e',f)$. Note that $P_0'$ is in the domain of {\rm (P6)} since $e$ does not appear in the constraints of other variables in {\rm (P6)}. 

It suffices to prove that $\hat{W}_6(P_0') \ge \hat{W}_6(P_0)$. Since $P_0$ is in the domain of {\rm (P6)}, we have that $e\ge e'$. Since $P_0$ is in the domain of {\rm (P6)}, we have by Proposition~\ref{PositiveBound} that $x,y,e_0,e,f,e' > 0$ and also by Proposition~\ref{PositiveBound2} that $e+e_0-x+f-y\geq e'+e_0-x+f-y > 0$ and $e+f-y \geq e'+f-y > 0$.

\begin{claim}\label{EFirst}
$$\frac{e_0(e_0-e)^+}{(e+e_0-x)e} \le  \frac{e_0(e_0-e')^+}{(e'+e_0-x)e'}.$$
\end{claim}
\begin{proof}
If $e \ge e_0$, then $(e_0-e)^+=0$ and the claim follows. So we may assume that $e < e_0$ and hence $e' < e_0$. But then
$$\frac{e_0-e}{(e+e_0-x)e} \le \frac{e_0-e'}{(e'+e_0-x)e'},$$
\noindent and the result follows by multiplying the above inequality by $e_0$.
\end{proof}

\begin{claim}\label{ESecond}
$$\frac{e_0^2 (e+e_0-x)(f-e)^+}{(e+e_0-x+f-y)(e+f-y)ef} \le \frac{e_0^2 (e'+e_0-x)(f-e')^+}{(e'+e_0-x+f-y)(e'+f-y)e'f}.$$
\end{claim}
\begin{proof}
If $e\ge f$, then $(f-e)^+=0$ and the claim follows. So we may assume that $e<f$ and hence $e'<f$. Since $f-y\le 0$ and $e \ge e' > 0$, we have that $(f-y)e \le (f-y)e'$. Hence we have
$$\frac{e+e_0-x}{e+e_0-x+f-y} \le \frac{e'+e_0-x}{e'+e_0-x+f-y},$$
since all of the terms in the inequality are strictly positive as noted above and cross multiplying and canceling like terms yields $(f-y)e \le (f-y)e'$.
\noindent Moreover we have
$$\frac{f-e}{(e+f-y)e} \le \frac{f-e'}{(e'+f-y)e'}$$
because as noted above all terms in the inequality are strictly positive and $e \ge e'$.  Multiplying the two above inequalities (whose left sides are both strictly positive) and then multiplying by $\frac{e_0^2}{f}$ (which is also positive) gives the desired inequality.
\end{proof}

\begin{claim}\label{EThird}
$$\frac{e_0(e_0-e)^+}{(e+e_0-x+f-y)e} \le \frac{e_0(e_0-e')^+}{(e'+e_0-x+f-y)e'}.$$
\end{claim}
\begin{proof}
If $e \ge e_0$, then $(e_0-e)^+=0$ and the claim follows. So we may assume that $e < e_0$ and hence $e' < e_0$. But then
$$\frac{e_0-e}{(e+e_0-x+f-y)e} \le \frac{e_0-e'}{(e'+e_0-x+f-y)e'},$$
\noindent and the result follows by multiplying the above inequality by $e_0$.
\end{proof}
 \noindent It follows from Claims~\ref{EFirst},~\ref{ESecond}, and~\ref{EThird} that $\hat{W}_6(P_0')\ge \hat{W}_6(P_0)$ as desired.
\end{proof}

Thus we may replace {\rm (P6)} with a new program {\rm (P7)} whose maximum value is at least that of {\rm (P6)} by setting $e=x+y-1$ as follows:
\begin{align*}
\hat{W}_7(x,y,e_0,f) = &\frac{e_0(e_0-x-y+1)^+}{(e_0+y-1)(x+y-1)} + \frac{e_0^2 (y-1+e_0)(f-x-y+1)^+}{(e_0+f-1)(x-1+f)(x+y-1)f} \\
&+ \frac{e_0(e_0-x-y+1)^+}{(e_0+f-1)(x+y-1)}
\end{align*}

Here is the new program:

\begin{table}[!h]
  \begin{tabular}{llr@{}l}
      \multicolumn{1}{r}{{\rm (P7)}:} & \multicolumn{3}{l}{maximize $\hat{W}_7(x,y,e_0,f)$}  \\[0.2cm]
			\multicolumn{1}{r}{\text{s.t.}} & \multicolumn{3}{l}{$\ $}\\[0.2cm]
     &\textbf{I. Degree constraints: } &$x$ & $\ \in [1-d,1], $ \\
		&  &$y$ & $\ \in [1-d,1], $ \\[0.2cm]
		&\textbf{II. Triangle constraints: }  &$e_0$& $\ \in [x-d,x]$,  \\
&  &$f$&$\ \in [y-d,y]$.  \\[0.2cm]
  \end{tabular}
\end{table}

\begin{corollary}
${\rm OPT}{\rm (P7)} \ge {\rm OPT}{\rm (P1)}.$
\end{corollary}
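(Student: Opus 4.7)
The plan is to follow the same bookkeeping pattern used in Corollaries~\ref{Cor1To2},~\ref{Cor1To3}, and~\ref{Cor1To4}: chain the preceding corollary ${\rm OPT}{\rm (P6)} \ge {\rm OPT}{\rm (P1)}$ with Lemma~\ref{Lem6To7} by showing that ${\rm OPT}{\rm (P7)} = {\rm OPT}{\rm (P6)}$. The key observation is that (P7) is obtained from (P6) by substituting $e = x+y-1$ into $\hat{W}_6$ to produce $\hat{W}_7$, retaining the constraints on $x,y,e_0,f$ unchanged, and dropping $e$ as a free variable.

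First I would establish ${\rm OPT}{\rm (P7)} \le {\rm OPT}{\rm (P6)}$ by noting that any feasible point $(x,y,e_0,f)$ of (P7) lifts to a feasible point $(x,y,e_0,x+y-1,f)$ of (P6), since $e = x+y-1$ trivially lies in $[x+y-1,1]$, and at this lifted point $\hat{W}_6 = \hat{W}_7$ directly by the construction of $\hat{W}_7$. For the reverse inequality ${\rm OPT}{\rm (P7)} \ge {\rm OPT}{\rm (P6)}$, Lemma~\ref{Lem6To7} guarantees that the maximum of (P6) is achieved at some point where $e = x+y-1$; projecting this optimizer to $(x,y,e_0,f)$ yields a feasible point of (P7) at which $\hat{W}_7$ equals $\hat{W}_6$ evaluated at the original maximizer. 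Combining both inequalities gives ${\rm OPT}{\rm (P7)} = {\rm OPT}{\rm (P6)}$, and the preceding corollary then yields ${\rm OPT}{\rm (P7)} \ge {\rm OPT}{\rm (P1)}$.

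The entire content of the reduction is already packaged inside Lemma~\ref{Lem6To7}, so there is no real obstacle at the corollary level; the only care required is verifying that the domains of (P6) and (P7) agree on the slice $\{e = x+y-1\}$, which is immediate since the constraint $e \in [x+y-1,1]$ admits $e = x+y-1$ as a valid choice and no other constraint of (P6) mentions $e$.
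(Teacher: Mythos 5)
Your argument is correct and matches the paper's (implicit) reasoning: the corollary follows by combining Lemma~\ref{Lem6To7} with the preceding corollary ${\rm OPT}{\rm (P6)} \ge {\rm OPT}{\rm (P1)}$, since an optimizer of (P6) with $e=x+y-1$ projects to a feasible point of (P7) with the same objective value. The extra direction ${\rm OPT}{\rm (P7)} \le {\rm OPT}{\rm (P6)}$ you verify is not needed for the stated inequality but is harmless and correct.
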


\subsection{Reduction to Two Variables}\label{ReduceTo2}

Before proceeding, it will be useful to switch the variables. To that end we introduce two new variables $a,b$ to replace $f,e_0$ respectively as follows:
\begin{itemize}
\item $e_0 = x - a$,
\item $f = y - b$.
\end{itemize}

Here then is program {\rm (P7)} with these new variables and constraints:
\begin{align*}
\hat{W}_7(x,y,a,b) = &\frac{(x-a)(1-y-a)^+}{(x+y-1-a)(x+y-1)} +\frac{(x-a)^2 (x+y-1-a)(1-x-b)^+}{(x+y-1-a-b)(x+y-1-b)(x+y-1)(y-b)} \\
&+ \frac{(x-a)(1-y-a)^+}{(x+y-1-a-b)(x+y-1)}
\end{align*}

Here is the new program:

\begin{table}[!h]
  \begin{tabular}{llr@{}l}
      \multicolumn{1}{r}{{\rm (P7)}:} & \multicolumn{3}{l}{maximize $\hat{W}_7(x,y,a,b)$}  \\[0.2cm]
			\multicolumn{1}{r}{\text{s.t.}} & \multicolumn{3}{l}{$\ $}\\[0.2cm]
     &\textbf{I. Degree constraints: } &$x$ & $\ \in [1-d,1], $ \\
		&  &$y$ & $\ \in [1-d,1], $ \\[0.2cm]
		&\textbf{II. Triangle constraints: }  &$a$& $\ \in [0,d]$,  \\
&  &$b$&$\ \in [0,d]$.  \\[0.2cm]
  \end{tabular}
\end{table}

\newpage

We now proceed with reducing $y$ as follows.
\begin{lemma}\label{Lem7To8}
The maximum of {\rm (P7)} is achieved when $y=1-d$. 
\end{lemma}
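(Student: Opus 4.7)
The plan is to show that $\hat{W}_7$ is non-increasing in $y$ throughout the domain of (P7), so that starting from any point $P_0 = (x,y,a,b)$ achieving the maximum, the replacement $P_0' = (x, 1-d, a, b)$ is feasible (since $y$ does not appear in the constraints on $x,a,b$) and satisfies $\hat{W}_7(P_0') \ge \hat{W}_7(P_0)$. Before attacking monotonicity I would verify once and for all that every factor appearing in $\hat{W}_7$ is strictly positive: since $d < 1/4$, one has $x+y-1 \ge 1-2d > 0$, $x+y-1-a-b \ge 1-4d > 0$, $y-b \ge 1-2d > 0$, and $x-a > 0$, using $a,b \in [0,d]$ and $x,y \in [1-d,1]$ (this is exactly the content of Propositions~\ref{PositiveBound} and~\ref{PositiveBound2} applied to the substituted variables).

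I would then analyze the three terms of $\hat{W}_7$ separately. For the first and third terms, the numerator factors as $(x-a)(1-y-a)^+$: the prefactor $(x-a)$ is constant in $y$, and $(1-y-a)^+$ is a non-negative, non-increasing function of $y$. The corresponding denominators $(x+y-1-a)(x+y-1)$ and $(x+y-1-a-b)(x+y-1)$ are strictly positive and strictly increasing in $y$. Hence each of these two terms is non-increasing in $y$.

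For the second term, the prefactor $(x-a)^2 (1-x-b)^+$ is a non-negative constant in $y$, so it suffices to analyze
$$h(y) := \frac{x+y-1-a}{(x+y-1-a-b)(x+y-1-b)(x+y-1)(y-b)}.$$
Writing $t = x+y-1$ (so $dt/dy = 1$) and logarithmically differentiating gives
$$\frac{h'(y)}{h(y)} = \frac{1}{t-a} - \frac{1}{t-a-b} - \frac{1}{t-b} - \frac{1}{t} - \frac{1}{y-b}.$$
The first two terms combine to $-b/[(t-a)(t-a-b)] \le 0$ because $b \ge 0$, and the remaining three terms are strictly negative. Hence $h'(y) < 0$, so the second term is (strictly) decreasing in $y$.

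Summing the three estimates shows $\hat{W}_7$ is non-increasing in $y$, so $\hat{W}_7(P_0') \ge \hat{W}_7(P_0)$, which proves the lemma. The only delicate step is the second term, where $y$ appears in both the numerator and the denominator and the straightforward factor-by-factor argument used for terms one and three does not apply; the logarithmic derivative resolves this cleanly, and the ramp $(1-x-b)^+$ never causes a problem because it is independent of $y$.
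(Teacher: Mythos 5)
Your proof is correct and takes essentially the same approach as the paper's: fix a maximizer, replace $y$ by $1-d$ (feasible since the constraints are independent box constraints), and show term by term that $\hat{W}_7$ does not decrease when $y$ is lowered, with the ramps handled in the same way (the $y$-dependent ramps only shrink the numerators, and the ramp in the middle term is independent of $y$). The only cosmetic difference is that for the middle term you prove monotonicity via a logarithmic derivative, using the grouping $\frac{1}{t-a}-\frac{1}{t-a-b}=\frac{-b}{(t-a)(t-a-b)}\le 0$, whereas the paper compares the values at $y$ and $y'=1-d$ directly by cross-multiplication; the underlying cancellation is identical.
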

\begin{proof}
Let $P_0 = (x,y,a,b)$ be a point that achieves the maximum of {\rm (P7)}. Let $y'=1-d$. Let $P_0' = (x,y',a,b)$. Note that $P_0'$ is in the domain of {\rm (P7)} since $y$ does not appear in the constraints of other variables in {\rm (P7)}. 

It suffices to prove that $\hat{W}_7(P_0') \ge \hat{W}_7(P_0)$. Since $P_0$ is in the domain of {\rm (P7)}, we have that $y\ge y'$. Since $P_0$ is in the domain of {\rm (P7)}, we have that $x,y> 0$, $a,b\geq 0$, and also that $x+y'-1-a-b = x-d-a-b > 0$ since $x\ge 1-d, a,b\le d$ and $ d < 1/4$.

\begin{claim}\label{YFirst}
$$\frac{(x-a)(1-y-a)^+}{(x+y-1-a)(x+y-1)} \le  \frac{(x-a)(1-y'-a)^+}{(x+y'-1-a)(x+y'-1)}.$$
\end{claim}
\begin{proof}
If $y \ge 1-a$, then $(1-y-a)^+=0$ and the claim follows. So we may assume that $y < 1-a$ and hence $y' < 1-a$. But then
$$\frac{1-y-a}{(x+y-1-a)(x+y-1)} \le \frac{1-y'-a}{(x+y'-1-a)(x+y'-1)},$$
\noindent and the result follows by multiplying the above inequality by $x-a$.
\end{proof}

\begin{claim}\label{YSecond}
\begin{align*}
&\frac{(x-a)^2 (x+y-1-a)(1-x-b)^+}{(x+y-1-a-b)(x+y-1-b)(x+y-1)(y-b)} \\
\le &\frac{(x-a)^2 (x+y'-1-a)(1-x-b)^+}{(x+y'-1-a-b)(x+y'-1-b)(x+y'-1)(y'-b)}.
\end{align*}
\end{claim}
\begin{proof}
If $x\ge 1-b$, then $(1-x-b)^+=0$ and the claim follows. So we may assume that $x <1-b$. Since $b\ge 0$ and $y \ge y' > 0$, we have that $-by \le -by'$. Since $x+y'-1-a > 0$, we have that
$$\frac{x+y-1-a}{x+y-1-a-b} \le \frac{x+y'-1-a}{x+y'-1-a-b},$$
cross multiplying and canceling like terms yields $-by \le -by'$. Moreover we also have that
$$\frac{1}{(x+y-1-b)(x+y-1)(y-b)} \le \frac{1}{(x+y'-1-b)(x+y'-1)(y'-b)}.$$
Multiplying the two above inequalities (whose left sides are both strictly positive) and then multiplying by $(x-a)^2(1-x-b)$ (which is also positive) gives the desired inequality.
\end{proof}

\begin{claim}\label{YThird}
$$\frac{(x-a)(1-y-a)^+}{(x+y-1-a-b)(x+y-1)} \le \frac{(x-a)(1-y'-a)^+}{(x+y'-1-a-b)(x+y'-1)}.$$
\end{claim}
\begin{proof}
If $y \ge 1-a$, then $(1-y-a)^+=0$ and the claim follows. So we may assume that $y < 1-a$ and hence $y' < 1-a$. But then
$$\frac{1-y-a}{(x+y-1-a-b)(x+y-1)} \le \frac{1-y'-a}{(x+y'-1-a-b)(x+y'-1)},$$
\noindent and the result follows by multiplying the above inequality by $x-a$.
\end{proof}
\noindent It follows from Claims~\ref{YFirst},~\ref{YSecond}, and~\ref{YThird} that $\hat{W}_7(P_0')\ge \hat{W}_7(P_0)$ as desired.
\end{proof}

Thus we may replace {\rm (P7)} with a new program {\rm (P8)} whose maximum value is at least that of {\rm (P7)} by setting $y=1-d$ as follows:
\begin{align*}
\hat{W}_8(x,a,b) = &\frac{(x-a)(d-a)^+}{(x-d-a)(x-d)} +\frac{(x-a)^2 (x-d-a)(1-x-b)^+}{(x-d-a-b)(x-d-b)(x-d)(1-d-b)} \\
&+ \frac{(x-a)(d-a)^+}{(x-d-a-b)(x-d)}
\end{align*}

Here is the new program:
\begin{table}[!h]
  \begin{tabular}{llr@{}l}
      \multicolumn{1}{r}{{\rm (P8)}:} & \multicolumn{3}{l}{maximize $\hat{W}_8(x,a,b)$}  \\[0.2cm]
			\multicolumn{1}{r}{\text{s.t.}} & \multicolumn{3}{l}{$\ $}\\[0.2cm]
     &\textbf{I. Degree constraints: } &$x$ & $\ \in [1-d,1], $ \\[0.2cm]
		&\textbf{II. Triangle constraints: }  &$a,b$& $\ \in [0,d]$.  \\
  \end{tabular}
\end{table}
\begin{corollary}
${\rm OPT}{\rm (P8)} \ge {\rm OPT}{\rm (P1)}.$
\end{corollary}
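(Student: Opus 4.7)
The plan is to simply chain together the reductions already built up through this section. By the previous corollary we have ${\rm OPT}{\rm (P7)} \ge {\rm OPT}{\rm (P1)}$, so it suffices to establish ${\rm OPT}{\rm (P8)} \ge {\rm OPT}{\rm (P7)}$. The program (P8) is precisely (P7) with $y$ fixed to $1-d$ and the corresponding $y$-constraint removed: substituting $y=1-d$ into $\hat{W}_7(x,y,a,b)$ yields exactly $\hat{W}_8(x,a,b)$, and the domain of (P8) is the slice of the domain of (P7) at $y = 1-d$. Thus every feasible point of (P8) lifts canonically to a feasible point of (P7) with the same objective value. Since Lemma~\ref{Lem7To8} already guarantees that the maximum of (P7) is attained at some point with $y=1-d$, we conclude ${\rm OPT}{\rm (P8)} = {\rm OPT}{\rm (P7)} \ge {\rm OPT}{\rm (P1)}$, which is the desired inequality.

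There is essentially no obstacle left at this step, since all of the real work was front-loaded into Lemma~\ref{Lem7To8}, which handled the three ramp terms of $\hat{W}_7$ by verifying each is non-increasing in $y$ on the feasible region (Claims~\ref{YFirst}, \ref{YSecond}, \ref{YThird}). The only sanity check worth noting is that $\hat{W}_8$ is well-defined on its domain: the denominators $x-d$, $x-d-a$, $x-d-b$, and $x-d-a-b$ are all strictly positive because $x \ge 1-d$ with $a,b \le d$ and $d < 1/4$ force $x - d - a - b \ge 1 - 4d > 0$, and the analogous inequalities handle the other denominators. With well-definedness in hand, the proof of the corollary is then a one-line invocation of Lemma~\ref{Lem7To8} together with the preceding corollary.
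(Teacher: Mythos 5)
Your proof is correct and follows the same route as the paper: the corollary is just the chaining of Lemma~\ref{Lem7To8} (which shows the maximum of {\rm (P7)} is achieved at $y=1-d$, so that ${\rm OPT}{\rm (P8)}={\rm OPT}{\rm (P7)}$ after substitution) with the previous corollary ${\rm OPT}{\rm (P7)}\ge {\rm OPT}{\rm (P1)}$. The added check that the denominators of $\hat{W}_8$ stay strictly positive on its domain is a fine (and accurate) sanity remark, but nothing beyond the paper's implicit argument.
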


We now proceed with reducing $x$ as follows.

\begin{lemma}\label{Lem8To9}
The maximum of {\rm (P8)} is achieved when $x=1-d$. 
\end{lemma}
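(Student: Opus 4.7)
The plan is to mimic the structure of the proof of Lemma~\ref{Lem7To8}. Let $P_0=(x,a,b)$ be any point achieving the maximum of {\rm (P8)}, set $x' := 1-d$, and let $P_0' := (x',a,b)$. Since $x$ does not appear in the constraints of the other variables of {\rm (P8)}, $P_0'$ is in the domain of {\rm (P8)}. From $x \ge 1-d = x'$, $a,b \in [0,d]$, and $d < 1/4$ we have in particular $x'-d-a-b = 1-2d-a-b \ge 1-4d > 0$, and similarly $1-d-b \ge 1-2d > 0$, so every denominator appearing in $\hat{W}_8$ is strictly positive throughout $[x',x]$. It suffices to show $\hat{W}_8(P_0) \le \hat{W}_8(P_0')$, which I would do in three claims, one per term, paralleling the three sub-claims in the proof of Lemma~\ref{Lem7To8}.

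For the first term the claim is $\frac{(x-a)(d-a)^+}{(x-d-a)(x-d)} \le \frac{(x'-a)(d-a)^+}{(x'-d-a)(x'-d)}$. If $a \ge d$ the ramp $(d-a)^+$ vanishes on both sides. Otherwise, substituting $u = x-d-a > 0$ rewrites the $x$-dependent ratio as $\frac{u+d}{u(u+a)}$; its derivative in $u$ equals $\frac{-u^2 - 2du - ad}{[u(u+a)]^2} < 0$, so the first term is strictly decreasing in $x$. The third term is handled identically with $u = x-d-a-b$; the ratio becomes $\frac{u+d+b}{u(u+a+b)}$ and an analogous computation shows its derivative in $u$ is negative.

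The second term will be the main obstacle. If $x \ge 1-b$ then the ramp $(1-x-b)^+$ is $0$ on the left and the claim holds trivially (observe $x' = 1-d < 1-b$ because $b \le d$). Assume therefore $x < 1-b$, so both ramps equal $1-x-b$ and $1-x'-b$ respectively. Let $h(x) := \frac{(x-a)^2(x-d-a)(1-x-b)}{(x-d-a-b)(x-d-b)(x-d)}$; the plan is to show $h$ is non-increasing on $[x',1-b]$. The logarithmic derivative is
\begin{equation*}
\frac{h'(x)}{h(x)} \;=\; \frac{2}{x-a} + \frac{1}{x-d-a} - \frac{1}{1-x-b} - \frac{1}{x-d-a-b} - \frac{1}{x-d-b} - \frac{1}{x-d},
\end{equation*}
and the key is to dominate the two positive contributions by three of the four negative ones via the term-by-term inequalities $\frac{1}{x-a} \le \frac{1}{x-d}$ (from $d \ge a$), $\frac{1}{x-a} \le \frac{1}{x-d-b}$ (from $d+b \ge a$), and $\frac{1}{x-d-a} \le \frac{1}{x-d-a-b}$ (from $b \ge 0$). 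Summing these three inequalities yields $\frac{2}{x-a} + \frac{1}{x-d-a} \le \frac{1}{x-d} + \frac{1}{x-d-b} + \frac{1}{x-d-a-b}$, and the remaining negative term $-\frac{1}{1-x-b}$, which is strictly negative on our interval, pushes the total below zero. Hence $h$ is non-increasing on $[x',1-b]$; dividing both sides by the positive constant $1-d-b$ yields the second claim, and combining the three claims gives $\hat{W}_8(P_0) \le \hat{W}_8(P_0')$.
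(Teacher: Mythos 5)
Your proposal is correct and follows essentially the same route as the paper: fix $a,b$, replace $x$ by $x'=1-d$, and verify the inequality term by term with the same case analysis on the ramps (and the same positivity facts such as $x-d-a-b\ge 1-4d>0$). The only difference is cosmetic --- where the paper compares the two points directly by multiplying ratio inequalities such as $\frac{x-a}{x-d}\le\frac{x'-a}{x'-d}$, you prove monotonicity in $x$ via derivatives (a substitution for the outer terms and a logarithmic-derivative domination for the middle term), and your three term-by-term dominations are exactly the differential analogues of the paper's ratio inequalities.
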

\begin{proof}
Let $P_0 = (x,a,b)$ be a point that achieves the maximum of {\rm (P8)}. Let $x'=1-d$. Let $P_0' = (x',a,b)$. Note that $P_0'$ is in the domain of {\rm (P8)} since $x$ does not appear in the constraints of other variables in {\rm (P8)}. 

It suffices to prove that $\hat{W}_8(P_0') \ge \hat{W}_8(P_0)$. Since $P_0$ is in the domain of {\rm (P8)}, we have that $x\ge x'$. Since $P_0$ is in the domain of {\rm (P8)}, we have that $a,b \geq 0$,  $x> 0$, and also that $x-d-a > x'-d-a = 1-2d-a > 0$ since $x\ge 1-d, a\le d$ and $ d < 1/3$.

\begin{claim}\label{XFirst}
$$\frac{(x-a)(d-a)^+}{(x-d-a)(x-d)} \le  \frac{(x'-a)(d-a)^+}{(x'-d-a)(x'-d)}.$$
\end{claim}
\begin{proof}
Since $d\ge 0$, we have that $-dx \le -dx'$. Since $x-d-a\ge x'-d-a > 0$, we have
$$\frac{x-a}{x-d-a} \le \frac{x'-a}{x'-d-a},$$
since all of the terms in the inequality are strictly positive as noted above and cross multiplying and canceling like terms yields $-dx \le -dx'$.  Moreover, $\frac{1}{x-d} \le \frac{1}{x'-d}$. Multiplying these two inequalities together (whose left sides are both positive) and then multiplying by $(d-a)^+$ (which is non-negative) gives the desired inequality.
\end{proof}

\begin{claim}\label{XSecond}
$$\frac{(x-a)^2 (x-d-a)(1-x-b)^+}{(x-d-a-b)(x-d-b)(x-d)(1-d-b)} 
\le \frac{(x'-a)^2 (x'-d-a)(1-x'-b)^+}{(x'-d-a-b)(x'-d-b)(x'-d)(1-d-b)}.$$
\end{claim}
\begin{proof}
If $x\ge 1-b$, then $(1-x-b)^+=0$ and the claim follows. So we may assume that $x <1-b$ and hence $x'<1-b$. 

Since $a\ge d$ and $x \ge x' > 0$, we have that $-(d-a)x \le -(d-a)x'$. Since $x-d\ge x'-d > 0$, we have that
$$\frac{x-a}{x-d} \le \frac{x'-a}{x'-d}.$$
\noindent Similarly since $a \le d+b$ and $x'-d-b > 0$, we have that
$$\frac{x-a}{x-d-b} \le \frac{x'-a}{x'-d-b}.$$
\noindent Similarly since $b\ge 0$ and $x'-d-a-b > 0$, we have that
$$\frac{x-d-a}{x-d-a-b} \le \frac{x'-d-a}{x'-d-a-b}.$$
Finally, we note that $1-x-b \le 1-x'-b$. Multiplying the four above inequalities (whose left sides are strictly positive) and then multiplying by $\frac{1}{1-d-b}$ (which is also positive) gives the desired inequality.
\end{proof}

\begin{claim}\label{XThird}
$$\frac{(x-a)(d-a)^+}{(x-d-a-b)(x-d)} \le \frac{(x'-a)(d-a)^+}{(x'-d-a-b)(x'-d)}.$$
\end{claim}
\begin{proof}
Since $d,b\ge 0$, we have that $-(d+b)x \le -(d+b)x'$. Since $x-d-a-b\ge x'-d-a-b > 0$, we have
$$\frac{x-a}{x-d-a-b} \le \frac{x'-a}{x'-d-a-b}.$$
Moreover, $\frac{1}{x-d} \le \frac{1}{x'-d}$. Multiplying these two inequalities together (whose left sides are both positive) and then multiplying by $(d-a)^+$ (which is non-negative) gives the desired inequality.
\end{proof}
It follows from Claims~\ref{XFirst},~\ref{XSecond}, and~\ref{XThird} that $\hat{W}_8(P_0')\ge \hat{W}_8(P_0)$ as desired.
\end{proof}

Thus we may replace {\rm (P8)} with a new program {\rm (P9)} whose maximum value is at least that of {\rm (P8)} by setting $x=1-d$ as follows:
\begin{align*}
\hat{W}_9(a,b) = &\frac{(1-d-a)(d-a)}{(1-2d-a)(1-2d)} +\frac{(1-d-a)^2 (1-2d-a)(d-b)}{(1-2d-a-b)(1-2d-b)(1-2d)(1-d-b)} \\
&+ \frac{(1-d-a)(d-a)}{(1-2d-a-b)(1-2d)}
\end{align*}
Note that we have dropped the ramp functions at this point since $a,b\le d$ and hence $d-a,d-b\ge 0$.

Here is the new program:

\begin{table}[h]
  \begin{tabular}{llr@{}l}
      \multicolumn{1}{r}{{\rm (P9)}:} & \multicolumn{3}{l}{maximize $\hat{W}_9(a,b)$}  \\[0.2cm]
			\multicolumn{1}{r}{\text{s.t.}} & \multicolumn{3}{l}{$a,b\in [0,d].$}\\[0.2cm] \end{tabular}
\end{table}

\begin{corollary}
${\rm OPT}{\rm (P9)} \ge {\rm OPT}{\rm (P1)}.$
\end{corollary}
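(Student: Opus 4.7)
The plan is to chain Lemma~\ref{Lem8To9} with the immediately preceding corollary ${\rm OPT}{\rm (P8)} \ge {\rm OPT}{\rm (P1)}$. The key observation is that specializing $x = 1-d$ in $\hat{W}_8(x,a,b)$ produces exactly $\hat{W}_9(a,b)$: with $x = 1-d$ the substitutions $x-a = 1-d-a$, $x-d = 1-2d$, $x-d-a = 1-2d-a$, $x-d-b = 1-2d-b$, $x-d-a-b = 1-2d-a-b$, $1-x-b = d-b$ collapse the formula for $\hat{W}_8$ into that of $\hat{W}_9$. Moreover, since $a,b\in[0,d]$, both $(d-a)^+ = d - a$ and $(1-x-b)^+ = (d-b)^+ = d - b$ are automatically non-negative, justifying the removal of the ramp functions performed in the passage from (P8) to (P9).

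From this identification I would deduce the two-sided equality ${\rm OPT}{\rm (P9)} = {\rm OPT}{\rm (P8)}$ as follows. On one hand, any $(a,b)\in[0,d]^2$ yields a feasible point $(1-d,a,b)$ of (P8) with value $\hat{W}_8(1-d,a,b) = \hat{W}_9(a,b)$, so ${\rm OPT}{\rm (P9)} \le {\rm OPT}{\rm (P8)}$. On the other hand, Lemma~\ref{Lem8To9} guarantees the maximum of (P8) is attained at some feasible point $(1-d, a^{*}, b^{*})$, which by the same identification satisfies $\hat{W}_8(1-d, a^{*}, b^{*}) = \hat{W}_9(a^{*}, b^{*}) \le {\rm OPT}{\rm (P9)}$, giving ${\rm OPT}{\rm (P8)} \le {\rm OPT}{\rm (P9)}$.

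Finally, chaining with the preceding corollary produces the desired inequality
\[
  {\rm OPT}{\rm (P9)} \;=\; {\rm OPT}{\rm (P8)} \;\ge\; {\rm OPT}{\rm (P1)}.
\]
There is no genuine obstacle at this stage; all the substantive work was carried out in Lemma~\ref{Lem8To9} (and its Claims~\ref{XFirst}, \ref{XSecond}, \ref{XThird}), while the present corollary is a bookkeeping step that bundles the substitution $x = 1-d$ into a cleaner two-variable optimization program, setting up the final reduction to a one-variable calculus problem.
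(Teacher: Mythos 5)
Your proposal is correct and follows the same route as the paper: the paper's (implicit) justification is precisely that Lemma~\ref{Lem8To9} lets one set $x=1-d$, under which $\hat{W}_8(1-d,a,b)$ coincides with $\hat{W}_9(a,b)$ (the ramps being removable since $a,b\le d$), so that ${\rm OPT}{\rm (P9)}\ge{\rm OPT}{\rm (P8)}\ge{\rm OPT}{\rm (P1)}$. Your additional observation that in fact ${\rm OPT}{\rm (P9)}={\rm OPT}{\rm (P8)}$ is a harmless slight strengthening of the same bookkeeping step.
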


\subsection{Reduction to One Variable}\label{ReduceTo1}

We now proceed with reducing $a$ as follows.
\begin{lemma}\label{Lem9To10}
The maximum of {\rm (P9)} is achieved when $a=0$. 
\end{lemma}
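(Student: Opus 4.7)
The plan is to mirror the preceding reduction lemmas. Let $P_0 = (a,b)$ be a point that achieves the maximum of (P9), and set $P_0' = (0,b)$. Since $a$ does not appear in any other variable's constraint, $P_0'$ still lies in the domain of (P9), so it suffices to show $\hat{W}_9(P_0') \ge \hat{W}_9(P_0)$. I will establish the stronger statement that each of the three summands of $\hat{W}_9(a,b)$ is a non-increasing function of $a$ on $[0,d]$ for every fixed $b \in [0,d]$.

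For the first and third summands, both of the shape $\frac{(1-d-a)(d-a)}{(\star)(1-2d)}$ with $\star \in \{1-2d-a,\, 1-2d-a-b\}$, I would substitute $s = d - a$. Each then becomes, up to the constant factor $1-2d$, a ratio $\frac{(1-2d+s)\,s}{c+s}$ with $c = 1-3d$ or $c = 1-3d-b$. The hypothesis $d < 1/4$ together with $b \le d$ makes $c > 0$, and a one-line quotient-rule computation shows that such a ratio is non-decreasing in $s$, i.e.\ non-increasing in $a$.

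The delicate case is the middle summand $T_2 = K(b)\cdot\frac{(1-d-a)^2(1-2d-a)}{1-2d-a-b}$, where $K(b) = \frac{d-b}{(1-2d-b)(1-2d)(1-d-b)} \ge 0$ is independent of $a$. Setting $s = d-a$, $u = 1-2d+s$, and $v = 1-3d+s$ (so $u = v+d$), the function $h(s) = u^2 v/(v-b)$ satisfies $h'(s) = \frac{u}{(v-b)^2}\bigl(2v(v-b) - (v+d)b\bigr)$. The whole lemma therefore reduces to verifying the inequality $2v(v-b) \ge (v+d)b$ for all $s,b \in [0,d]$, and this is the main obstacle. The right-hand side is increasing in $b$, so it suffices to take $b = d$, reducing to $2v^2 \ge d(3v + d)$; as the left-hand side is quadratic and the right-hand side affine in $v$, the difference has $v$-derivative $4v - 3d \ge 4(1-3d)-3d = 4-15d > 0$ on the relevant range (since $d < 4/15$), so it further suffices to check at $v = 1-3d$, where the inequality becomes the polynomial inequality $26d^2 - 15d + 2 \ge 0$ in $d$ alone.

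To conclude, I would verify this at $d = \frac{7 - \sqrt{21}}{14}$. Using the identity $7d^2 - 7d + 1 = 0$ that defines $d$ (equivalently, $d^2 = d - \tfrac{1}{7}$), one rewrites $26d^2 - 15d + 2 = 11d - \tfrac{12}{7} = \frac{53 - 11\sqrt{21}}{14}$, which is strictly positive since $11\sqrt{21} = \sqrt{2541} < \sqrt{2809} = 53$. Summing the three monotonicity estimates yields $\hat{W}_9(P_0') \ge \hat{W}_9(P_0)$, as required.
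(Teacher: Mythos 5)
Your proposal is correct and follows essentially the same route as the paper: each of the three summands is shown to be non-increasing in $a$, with the middle term handled by differentiating $\frac{(1-d-a)^2(1-2d-a)}{1-2d-a-b}$, and the resulting condition (your $2v(v-b)\ge(v+d)b$) is exactly the paper's bound $H_1\le 2$, reduced in both cases to the worst case $a=b=d$ and the same quadratic inequality $26d^2-15d+2\ge 0$. The only cosmetic differences are the substitution $s=d-a$ with a quotient-rule check for the first and third terms (the paper uses factor-ratio monotonicity) and verifying $26d^2-15d+2\ge 0$ via the identity $7d^2-7d+1=0$ rather than via $d\le 1/5$ and the roots $\frac{15\pm\sqrt{17}}{52}$.
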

\begin{proof}
Let $P_0 = (a,b)$ be a point that achieves the maximum of {\rm (P9)}. Let $P_0' = (0,b)$. Note that $P_0'$ is in the domain of {\rm (P9)}.

It suffices to prove that $\hat{W}_9(P_0') \ge \hat{W}_9(P_0)$. 
\begin{claim}\label{AFirst}
$$\frac{(1-d-a)(d-a)}{(1-2d-a)(1-2d)} \le  \frac{(1-d)d}{(1-2d)(1-2d)}.$$
\end{claim}
\begin{proof}
Since $a\ge 0$, we have that $1-d-a \le 1-d$. Moreover, $d \le 1-2d$ since $d\le 1/3$. Since $1-2d-a> 0$, it follows that
$$\frac{d-a}{1-2d-a} \le \frac{d}{1-2d}.$$
\noindent Multiplying these two inequalities together (whose left sides are both positive) and then multiplying by $\frac{1}{1-2d}$ (which is non-negative) gives the desired inequality.
\end{proof}
\begin{claim}\label{ASecond}
$$\frac{(1-d-a)^2 (1-2d-a)}{1-2d-a-b} \le \frac{(1-d)^2 (1-2d)}{1-2d-b}.$$
\end{claim}
\begin{proof}
Let $H(s,t) = \frac{(1-d-s)^2 (1-2d-s)}{1-2d-s-t}$. It suffices to prove that $\frac{\partial H}{\partial s}(s,t) \ge 0$ for all $s,t\in [0,d]$.
Note that
\begin{align*}
\frac{\partial H}{\partial s}(s,t) &= H(s,t) \cdot \left(\frac{2}{1-d-s} + \frac{1}{1-2d-s} - \frac{1}{1-2d-s-t} \right) \\
&=H(s,t) \cdot \left(\frac{2}{1-d-s} - \frac{t}{(1-2d-s)(1-2d-s-t)} \right)\\
&=\frac{H(s,t)}{1-d-s} \cdot \left(2 - \frac{t(1-d-s)}{(1-2d-s)(1-2d-s-t)} \right).
\end{align*}
\noindent Let 
$$H_1(s,t) = \frac{t(1-d-s)}{(1-2d-s)(1-2d-s-t)}.$$ 
\noindent Recall that $H(s,t)\ge 0$ and $1-d-s\ge 0$ for all $s,t\in [0,d]$ since $d\le 1/4$. Thus it suffices to show $H_1(s,t)\le 2$.

Note that $H_1(s,t) \le H_1(s,d)$ since $t\le d$ and $\frac{1}{1-2d-s-t} \le \frac{1}{1-3d-s}$. We claim that $H_1(s,d) \le H_1(d,d)$. To see this, note that $\frac{1}{1-3d-s} \le \frac{1}{1-4d}$ since $s\le d$ and $d<1/4$ and similarly that
$$\frac{1-d-s}{1-2d-s} \le \frac{1-2d}{1-3d}.$$
\noindent This proves the claim that 
$$H_1(s,d)\le H_1(d,d)  = \frac{d(1-2d)}{(1-3d)(1-4d)}.$$
\noindent But then $H_1(d,d) \le 2$ when $d(1-2d) \le 2(1-3d)(1-4d)$ or equivalently when 
$$26d^2-15d+2 \ge 0.$$ The roots of this quadratic equation are $\frac{15-\sqrt{17}}{52} \approx 0.20917$ and $\frac{15+\sqrt{17}}{52}\approx 0.36775$. Since $d\le 1/5$, it follows that $26d^2-15d+2 \ge 0$ and hence $H_1(s,t)\le H_1(d,d)\le 2$ as desired.
\end{proof}
\begin{claim}\label{AThird}
$$\frac{(1-d-a)(d-a)}{(1-2d-a-b)(1-2d)} \le  \frac{(1-d)d}{(1-2d-b)(1-2d)}.$$
\end{claim}
\begin{proof}
Since $a\ge 0$, we have that $1-d-a \le 1-d$. Moreover, $d \le 1-2d-b$ since $b\le d$ and $d\le 1/4$. Since $1-2d-a-b\ge 0$, it follows that
$$\frac{d-a}{1-2d-a-b} \le \frac{d}{1-2d-b}.$$
\noindent Multiplying these two inequalities together (whose left sides are both positive) and then multiplying by $\frac{1}{1-2d}$ (which is non-negative) gives the desired inequality.
\end{proof}
\noindent It follows from Claims~\ref{AFirst},~\ref{ASecond}, and~\ref{AThird} that $\hat{W}_9(P_0')\ge \hat{W}_9(P_0)$ as desired.
\end{proof}

Thus we may replace {\rm (P8)} with a new program {\rm (P9)} whose maximum value is at least that of {\rm (P8)} by setting $a=0$ as follows:
\begin{align*}
\hat{W}_{10}(b) = &\frac{(1-d)d}{(1-2d)^2} +\frac{(1-d)^2(d-b)}{(1-2d-b)^2(1-d-b)} + \frac{(1-d)d}{(1-2d-b)(1-2d)}.
\end{align*}
Here is the new program:

\begin{table}[h]
  \begin{tabular}{llr@{}l}
      \multicolumn{1}{r}{{\rm (P10)}:} & \multicolumn{3}{l}{maximize $\hat{W}_{10}(b)$}  \\[0.2cm]
			\multicolumn{1}{r}{\text{s.t.}} & \multicolumn{3}{l}{$b\in [0,d]$.}\\[0.2cm] \end{tabular}
\end{table}

\begin{corollary}
${\rm OPT}{\rm (P10)} \ge {\rm OPT}{\rm (P1)}.$
\end{corollary}

\subsection{The Final Optimization}\label{FinalOpt}

We now proceed with reducing $b$ as follows.

\begin{lemma}\label{Lem10To11}
The maximum of {\rm (P10)} is achieved when $b=0$. 
\end{lemma}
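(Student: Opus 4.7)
The plan is to follow the $P_0, P_0'$ template from the preceding lemmas. Let $P_0 = (b)$ achieve the maximum of {\rm (P10)} and set $P_0' = (0)$, which lies in the domain. It suffices to show $\hat{W}_{10}(P_0') \ge \hat{W}_{10}(P_0)$. The first term $\frac{(1-d)d}{(1-2d)^2}$ does not depend on $b$, so we focus on the remaining two. The third summand $\frac{(1-d)d}{(1-2d-b)(1-2d)}$ is strictly increasing in $b$ (its denominator shrinks), while the second summand $\frac{(1-d)^2(d-b)}{(1-2d-b)^2(1-d-b)}$ is decreasing, so the content of the lemma is that the decrease of the second compensates the increase of the third.

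I would verify this by clearing denominators. Since $d < 1/4$ and $b \in [0,d]$, all factors $1-2d$, $1-2d-b$, $1-d-b$, and $1-d$ are strictly positive (by Proposition~\ref{PositiveBound} and the elementary bound $1-2d-b \ge 1-3d > 0$). Multiplying the inequality $\hat{W}_{10}(0) \ge \hat{W}_{10}(b)$ through by the common positive denominator $(1-d)(1-2d)^2(1-2d-b)^2(1-d-b)$ reduces it to the polynomial inequality
\[
2d(1-2d-b)^2(1-d-b) - (1-d)(d-b)(1-2d)^2 - d(1-2d-b)(1-2d)(1-d-b) \ge 0.
\]
First combining the first and third terms (which share the factor $d(1-d-b)(1-2d-b)$) and then using the identity $d(1-d-b) - (1-d)(d-b) = b(1-2d)$ to factor out $b$, collection by powers of $b$ gives
\[
b\,(1-2d)\,(1 - 7d + 7d^2) \;+\; b^2 d\,(5 - 8d) \;-\; 2b^3 d.
\]

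The elegant point that makes everything work is that our specific value $d = \frac{7 - \sqrt{21}}{14}$ is precisely a root of $7d^2 - 7d + 1 = 0$ (whose roots are exactly $\frac{7 \pm \sqrt{21}}{14}$). Hence the linear-in-$b$ coefficient vanishes identically, and the expression collapses to $b^2 d\bigl[(5 - 8d) - 2b\bigr]$. Since $b \le d$ and $d < 1/5$, we have $5 - 8d - 2b \ge 5 - 10d > 0$, so this is manifestly non-negative on $[0,d]$. The main obstacle is simply the bookkeeping in the polynomial expansion; once completed, the cancellation of the linear term is exactly what one should expect, as this vanishing is precisely the property singling out $d = \frac{7 - \sqrt{21}}{14}$ as the threshold value at which $\hat{W}_{10}(0) = 1$ (equivalently, at which $\hat{W}_{10}$ has a degenerate critical point at $b=0$).
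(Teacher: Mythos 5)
Your proposal is correct, and it is not quite the paper's argument, so a comparison is in order. I verified your expansion independently: after clearing the (positive) common denominator, the difference $\hat{W}_{10}(0)-\hat{W}_{10}(b)$ does become
\[
b\,(1-2d)\,(1-7d+7d^2)\;+\;b^2 d\,(5-8d)\;-\;2b^3 d,
\]
and since $d=\frac{7-\sqrt{21}}{14}$ satisfies $7d^2-7d+1=0$, this collapses to $b^2 d\,(5-8d-2b)\ge 0$, proving the lemma. The paper proceeds along the same first step (write $\hat{W}_{10}$ as a constant plus $\frac{1-d}{1-2d}G(b)$, clear denominators, factor $b$ out of the resulting cubic $F(b)$), but it finishes differently: it never uses the exact value of $d$ at this stage, instead showing the remaining quadratic factor $E(b)=F(b)/b$ is decreasing on $[0,d]$ and non-positive at $b=0$, so its version of the lemma is robust for a range of $d$; the root property of $7d^2-7d+1$ is reserved for the very last step, deducing ${\rm OPT}{\rm (P1)}\le 1$ from $\hat{W}_{10}(0)=\frac{3d(1-d)}{(1-2d)^2}$. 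Your route spends that identity here to annihilate the linear term; it is shorter and explains conceptually why this $d$ is the threshold (the coefficient of $b$ vanishes exactly there, so $b=0$ only barely remains the maximizer), at the cost of being tied to the exact root --- which is all the lemma requires, since {\rm (P10)} is posed with $d$ fixed at that value. One further remark: your collected coefficients do not match those printed in the paper's proof, which states $F(b)=b\big((-1+5d-13d^2-12d^3)+b(-5d+10d^2)+b^2(2d)\big)$; the discrepancy comes from an arithmetic slip in the paper's expansion (a $(1-d)^2$ where $(1-2d)^2$ should appear), and your coefficients are the correct ones --- the true linear coefficient of $F$ is $-(1-2d)(1-7d+7d^2)$ and the $b^2$ coefficient is $-d(5-8d)$, as a numerical spot-check (say $d=0.1$, $b=0.05$) confirms. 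The paper's monotonicity argument still goes through with the corrected coefficients, since $1-7d+7d^2\ge 0$ for $d\le\frac{7-\sqrt{21}}{14}$, so both proofs stand; yours has the merit of making the role of the quadratic $7d^2-7d+1$ visible already at this step.
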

\begin{proof}
Note that $\hat{W}_{10}(b)= \frac{(1-d)d}{(1-2d)^2} + \frac{1-d}{1-2d}G(b)$ where
\begin{align*}
G(b) &= \frac{(1-d)(1-2d)(d-b)}{(1-2d-b)^2(1-d-b)} + \frac{d}{1-2d-b}\\[0.2cm]
&= \frac{(1-d)(d-b)(1-2d) + d(1-2d-b)(1-d-b)}{(1-2d-b)^2(1-d-b)}\\[0.2cm]
&= \frac{2d(1-d)(1-2d) + b(-1+d+d^2) + b^2d}{(1-2d-b)^2(1-d-b)}
\end{align*}
\noindent It suffices to show that $G(b) \le G(0)$ for all $b\in [0,d]$ where
$$G(0) = \frac{2d(1-d)(1-2d)}{(1-2d)^2(1-d)} = \frac{2d}{1-2d}.$$
\noindent Now 
$$G(b)-G(0) = \frac{F(b)}{(1-2d-b)^2(1-d-b)(1-2d)}$$
\noindent where
\begin{align*}
F(b) &= (1-2d)(2d(1-d)(1-2d) + b(-1+d+d^2)+b^2d) - 2d(1-2d-b)^2(1-d-b)\\[0.2cm]
&= b\Big((1-2d)(-1+d+d^2)+2d((1-d)^2+ 2(1-d)(1-2d))\Big)\\[0.2cm]
&\ \ \ \ + b^2 \Big((1-2d)d-2d(3-4d)\Big) + b^3(2d)\\[0.2cm]
&= b\Big(-1-d+3d^2-2d^3+2d(1-d)(3-5d)\Big) + b^2\Big(-5d+10d^2\Big)+b^3(2d)\\[0.2cm]
&= b\Big((-1+5d-13d^2-12d^3)+ b(-5d+10d^2)+b^2(2d)\Big).\\[0.2cm]
\end{align*}
\noindent Since $b\ge 0$, it suffices to show that $E(b)=F(b)/b \le 0$, that is it suffices to show that
$$E(b)=(-1+5d-13d^2-12d^3)+ b(-5d+10d^2)+b^2(2d) \le 0,$$
\noindent for all $b\in [0,d]$. 
To that end, we claim that $E(b)\le E(0)$. Note that 
$$E'(b) = -5d+10d^2 + 4db = d(-5+10d+4b).$$
\noindent Since $b\le d$, we have that $E'(b)\le d(-5+14d)$ which is at most $0$ since $d\le 5/14$. Thus $E(b)$ is decreasing in $b$ on $[0,d]$ and so we have that $E(b) \ge E(0)$ as claimed.

Yet
$$E(0) = -1+5d-13d^2-12d^3 \le 1-5d \le 0,$$
\noindent since $d\in [0,1/5]$. Hence $E(b)$ and thus $F(b)$ are non-positive on $[0,d]$. Thus, $G(b) \le G(0)$ and hence $\hat{W}_{10}(b)\le \hat{W}_{10}(0)$ as desired.
\end{proof}

Thus we now have the following:
\begin{corollary}\label{CorLast}
$${\rm OPT}{\rm (P1)} \le {\rm OPT}{\rm (P10)} = \hat{W}_{10}(0) = \frac{3d(1-d)}{(1-2d)^2}.$$
\end{corollary}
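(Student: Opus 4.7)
The plan is to combine the chain of program reductions already established, apply Lemma~\ref{Lem10To11}, and evaluate $\hat{W}_{10}(0)$ explicitly. The statement is essentially a bookkeeping collection of what has been proven; the substantive work lies in Lemma~\ref{Lem10To11}.

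First, I would assemble the chain of program optimal values. The equalities ${\rm OPT}{\rm (P1)} = {\rm OPT}{\rm (P2)} = {\rm OPT}{\rm (P3)}$ follow from Corollary~\ref{Cor1To2} and Corollary~\ref{Cor1To3}; the inequality ${\rm OPT}{\rm (P3)} \le {\rm OPT}{\rm (P4)}$ is Corollary~\ref{Cor1To4}. Each subsequent reduction lemma (Lemma~\ref{Lem4To5}, Lemma~\ref{Lem5To6}, Lemma~\ref{Lem6To7}, Lemma~\ref{Lem7To8}, Lemma~\ref{Lem8To9}, Lemma~\ref{Lem9To10}) furnishes an inequality between consecutive program values, each recorded in the unlabelled corollary immediately following. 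Chaining these together gives ${\rm OPT}{\rm (P1)} \le {\rm OPT}{\rm (P10)}$.

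Next, Lemma~\ref{Lem10To11} tells us the maximum of $\hat{W}_{10}$ on $[0,d]$ is achieved at $b=0$, so ${\rm OPT}{\rm (P10)} = \hat{W}_{10}(0)$. It remains only to compute this value.

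Finally, I would substitute $b=0$ into
$$\hat{W}_{10}(b) = \frac{(1-d)d}{(1-2d)^2} + \frac{(1-d)^2(d-b)}{(1-2d-b)^2(1-d-b)} + \frac{(1-d)d}{(1-2d-b)(1-2d)}.$$
The first term is already independent of $b$; the second term becomes $\frac{(1-d)^2 d}{(1-2d)^2(1-d)} = \frac{(1-d)d}{(1-2d)^2}$ after cancelling a factor of $(1-d)$; and the third term becomes $\frac{(1-d)d}{(1-2d)^2}$ directly. Summing the three equal contributions gives $\hat{W}_{10}(0) = \frac{3d(1-d)}{(1-2d)^2}$, yielding the claimed equality. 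The only potential obstacle is making sure every link of the program-reduction chain is correctly accounted for; beyond that, the corollary is an immediate consequence of previously proven facts combined with a direct arithmetic simplification.
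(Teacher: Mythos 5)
Your proposal is correct and matches the paper's (implicit) argument exactly: chain the previously recorded corollaries to get ${\rm OPT}{\rm (P1)} \le {\rm OPT}{\rm (P10)}$, invoke Lemma~\ref{Lem10To11} to conclude ${\rm OPT}{\rm (P10)} = \hat{W}_{10}(0)$, and evaluate $\hat{W}_{10}(0)$ as three equal terms summing to $\frac{3d(1-d)}{(1-2d)^2}$. The arithmetic at $b=0$ is right, so nothing is missing.
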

\noindent We are now ready to complete the proof of Theorem~\ref{MainWThm}.
\begin{proof}[Proof of Theorem~\ref{RealMain2}]
It suffices to prove that $${\rm OPT}{\rm (P1)}\le 1.$$ This follows from Corollary~\ref{CorLast} and the fact that
$$\frac{3d(1-d)}{(1-2d)^2} \le 1,$$
\noindent since $d = \frac{7 - \sqrt{21}}{14}$ is a root of 
$$7d^2-7d+1=0,$$
\noindent in the interval $[0, 0.25)$.
\end{proof}

\section{Further Directions}\label{Further}

It is natural to wonder if the value of $d = \frac{7 - \sqrt{21}}{14}$ could be improved upon using our method.  As this approach solves an optimization problem leading to an equation whose root on $[0, 0.25)$ is exactly this value of $d$, at first glance this does not seem possible. However, we believe that the values that the variables achieve at the maximum point are not realizable by any graph.  That is, we believe there may be additional bounds on neighborhood densities imposed by structural conditions of graphs.  With such additional bounds, the optimal value could change.  Results in this direction would be very interesting.\footnote{
In particular, it seems that $b=0$ can be improved upon. Simple calculations suggest that $b$ should in fact on average be at least $d \left(\frac{1-3d}{1-2d}\right)$ for the maximum point. Such a bound would (if our rough calculations are correct) lead to $d\approx 0.187$ (and hence $\delta(G) \approx 0.813n$). However, this argument only seems to work on \emph{average} over all $f$ and $e$; thus some additional averaging bound would need to be added to {\rm (P1)}. But then the nice symmetrization argument of Subsection~\ref{ReduceTo10} could no longer be applied. Instead some exchange argument would be necessary. We were unable to prove this, though computer optimization programs suggest that the new optimum would indeed give some improvement on our value.}

Nevertheless, we do not believe that our weight function $w_G$ as defined would prove the Nash-Williams Conjecture asymptotically as our calculations suggest that $w_G$ is not non-negative for the nearly extremal examples (e.g. the clique blow-up of $C_4$, the independent blow-up of $K_4$, etc.). Instead a different weighting seems to be needed.

One might think that adding an initial weight $w$ to the triangles and then applying edge-gadgets to satisfy the remaining demand of each edge might lead to some improvement. We remark that curiously  using any \emph{uniform} initial weight $w$ results in the same $w_G(T)$ as the $w$ terms cancel out.  For ease of reading, we opted to not use any initial weight, equivalently setting $w=0$. However, it may be possible to improve the value of $d$ by using some non-uniform initial weight for triangles as Montgomery~\cite{Montgomery} did for general $r$. 

We think though that the key to solving the Nash-Williams Conjecture asymptotically may lie in choosing a \emph{non-uniform delegation}, that somehow edges should delegate demand only to triangles with certain properties and so on for the $K_4$s and $K_5$s. Yet we were unable to determine what delegation rule would fit the known extremal examples.

\section{Acknowledgments}
The authors would like to thank Daniela K\"{u}hn and Deryk Osthus for useful suggestions and for pointing out that Corollary 1.4 in~\cite{band} could be improved immediately using the main result of this paper. The authors would also like to thank Ben Barber as well as the anonymous referees for helpful comments.

\end{document}